\documentclass[reqno,11pt]{amsart}
\usepackage[colorlinks=true, linkcolor=blue, citecolor=blue]{hyperref}

\usepackage{amssymb}
\usepackage{amsmath, graphicx, rotating} 
\usepackage{color}
\usepackage{soul}
\usepackage[dvipsnames]{xcolor}
  
\usepackage{ifthen}
\usepackage{xkeyval}
\usepackage{todonotes}
\usepackage[T1]{fontenc}
\usepackage{lmodern}
\usepackage[english]{babel}

\usepackage{ upgreek }
\usepackage{stmaryrd}
\SetSymbolFont{stmry}{bold}{U}{stmry}{m}{n}
\usepackage{amsthm}
\usepackage{float}

\usepackage{ bbm }
\usepackage{ stmaryrd }
\usepackage{ mathrsfs }
\usepackage{ frcursive }
\usepackage{ comment }

\usepackage{pgf, tikz}
\usetikzlibrary{shapes}
\usepackage{varioref}
\usepackage{enumitem}
\usepackage{longtable}

\usepackage{mathtools}

\usepackage{dsfont}

\definecolor{rouge}{rgb}{0.7,0.00,0.00}
\definecolor{vert}{rgb}{0.00,0.5,0.00}
\definecolor{bleu}{rgb}{0.00,0.00,0.8}

\usepackage[margin=1.26in]{geometry}

\newtheorem{theorem}{Theorem}[section]
\newtheorem*{theorem*}{Theorem}
\newtheorem{lemma}[theorem]{Lemma}

\newtheorem{corollary}[theorem]{Corollary}
\newtheorem{proposition}[theorem]{Proposition}

\labelformat{hypothesis}{\textbf{M\kern-0.1mm#1}}

\newtheorem{condition}{Condition}
\newtheorem{conditionA}{A\kern-0.1mm}
\labelformat{conditionA}{\textbf{A\kern-0.1mm#1}}

\newtheorem{conditionEmpty}{\kern-0.1mm}
\labelformat{conditionEmpty}{\textbf{\kern-0.1mm#1}}

\renewcommand\dots{\hbox to 1em{.\hss.\hss.}}

\theoremstyle{definition}

\numberwithin{equation}{section}

\newcommand*{\abs}[1]{\left\lvert#1\right\rvert}
\newcommand*{\norm}[1]{\left\lVert#1\right\rVert}

\newcommand*{\scal}[2]{\left\langle {#1}, {#2} \right\rangle}

\def\bb#1{\mathbb{#1}}

\def\bf#1{\mathbf{#1}}
\def\scr#1{\mathscr{#1}}

\def\ds#1{\mathds{#1}}

\newcommand\ee{\varepsilon}

\newcommand{\Rdpint}{ (\bb R^d_+)^\circ}

\DeclareMathOperator{\supp}{supp}

\def\Rd {\mathbb{R}^d}
\def\Rd*{(\mathbb{R}^d)^*}
\def\Pd{{\mathbb{P}}^{d-1}}
\def\Pd*{(\mathbb{P}^{d-1})^*}

\begin{document}

\title[Tails of the invariant measure]  
{On the tails of the invariant measure for\\multidimensional affine stochastic recursions\\in the critical case}

\author{Ion Grama}
\author{Sebastian Mentemeier}\thanks{S.M. was supported by DFG grant ME 4473/2-1.} 
\author{Hui Xiao}\thanks{H.X. was supported by the National Natural Science Foundation of China 
(Grant Nos.\  12288201, 12595283 and 12631006)}

\curraddr[Grama, I.]{Univ Bretagne Sud, CNRS UMR 6205, LMBA, Vannes, France}
\email{ion.grama@univ-ubs.fr}

\curraddr[Mentemeier, S.]{Universität Hildesheim, Institut für Mathematik und Angewandte Informatik, Hildesheim, Germany}
\email{mentemeier@uni-hildesheim.de}

\curraddr[Xiao, H.]{State Key Laboratory of Mathematical Sciences, Academy of Mathematics and Systems Science, Chinese Academy of Sciences, Beijing 100190, China.}
\email{xiaohui@amss.ac.cn}

%%%%%%%%%%%%%%%%%%%%%%%%%%%%%%%%%%%%%%%%%%%%%%%%%%%%%%%%

\begin{abstract}
We study the  behavior at infinity of the invariant Radon
measure for the multidimensional affine stochastic recursion 
$V_n = A_n V_{n-1} + B_n,$ 
where $(A_n)_{n \geq 1}$ are positive random matrices, $(B_n)_{n \geq 1}$ are random vectors with nonnegative entries,
 and $(A_n, B_n)_{n \geq 1}$ are independent and identically distributed. 
%In the contractive regime where the top Lyapunov exponent of the random matrix products $A_n \cdots A_1$ is negative, 
%the invariant measure is a probability measure and has a power-law tail, a phenomenon described by the
%Kesten-Goldie theory.
 In the critical regime where the top Lyapunov exponent of the random matrix products $A_n \cdots A_1$ is zero, 
Brofferio, Peign\'e and Pham \cite{BPP21} recently established the existence and uniqueness,
up to multiplication by a constant, of an invariant Radon measure with infinite total mass.
They proved that the tail behavior of this measure when applied to radial sets is governed by a slowly varying function. 
Our goal is to show that this slowly varying function is actually bounded. Moreover, we investigate directional tail behavior. 
\end{abstract}

\date{\today}
\subjclass[2020]{Primary 60J05; % discrete time Markov processes on general state space.
	Secondary 60B15, %probability measures on groups (matrices??? 60B20)
	60K15. % Markov renewal processes
	}
\keywords{Affine stochastic recursion, product of random matrices, critical regime, invariant Radon measure, tail behavior}

\maketitle

%\tableofcontents

%%%%%%%%%%%%%%%%%%%%%%%%%%%%%%%%%%%%%%%%%%%%%%%%%%%%%%%%%
%%%%%%%%%%%%%%%%%%%%%%%%%%%%%%%%%%%%%%%%%%%%%%%%%%%%%%%%%
\section{Introduction}

The affine stochastic recursion is a basic model for stochastic systems in which multiplicative
and additive effects interact. 
Fix an integer $d \geq 1$ and 
let $(A_n, B_n)_{n \geq 1}$ be a sequence of independent and identically distributed copies of a random pair $(A,B)$,
where $A$ is a random $d\times d$ matrix with entries in $\bb R_+ = [0, \infty)$ 
and $B$ is a random vector in the positive orthant 
$\bb R_+^d = \{ (x_1, \ldots, x_d): x_i \in \bb R_+, \  \forall \  1 \leq i \leq d \}$ satisfying $\bb P(B \neq 0)>0$.  
No independence between $A$ and $B$ is assumed. 
In this paper, we consider the multidimensional affine stochastic recursion defined by 
\begin{align*}%\label{}
V_n = A_n V_{n-1} + B_n. 
\end{align*}  
This model arises naturally in a variety of probabilistic settings,
including perpetuities, stochastic volatility models, random difference equations,
and iterated random function systems, see the textbook \cite{BDM16} for a survey. 
A fundamental problem is to understand the long-time behavior 
of the Markov chain $(V_n)_{n \ge 0}$ and, in particular, the structure and asymptotic
properties of its invariant measure $\nu$ on $\bb R_+^d$.  
By definition, such a measure satisfies,  
for every compactly supported measurable function $f$ on $\bb R_+^d$, 
\begin{align}\label{def invariant measure}
\nu(f) = \bb E \int_{\bb R_+^d} f(Ax+B) \, \nu(dx),
\end{align}
where the expectation is taken with respect to the law $\mu$ of $(A, B)$.

The asymptotic behavior of the invariant measure is governed to a large extent by the
top Lyapunov exponent $\lambda$ of the associated products of random matrices (cf.\ \cite{FK60}): 
under a suitable integrability assumption, $\bb P$-almost surely, 
\begin{align}\label{def Lyapunov exponent}
\lambda = \lim_{n \to \infty} \frac{1}{n} \log \norm{A_n \cdots A_1}, 
\end{align}
where $\| A \| = \sup_{x \in \bb R^d_+ \setminus \{0\} } \frac{|Ax|}{|x|}$, 
% is the operator norm of a $d \times d$ matrix $g$,
$|x| = \sum_{i=1}^d |\langle x, e_i\rangle |$ and $(e_i)_{1 \leq i \leq d}$ is the standard basis of $\bb R^d$. 
There are two typical regimes in the literature. 
In the {\it contractive regime} $\lambda <0$,  the Markov chain $(V_n)_{n \ge 0}$ 
%When $\gamma<0$ (called {\it contractive case}), the system is contractive and 
admits a unique invariant probability measure; see for example \cite[Theorem 4.1.4]{BDM16}. 
A remarkable phenomenon in this regime is that the invariant distribution may exhibit heavy tails even when the noise variables
themselves have light tails. 
More specifically, Kesten's theorem (\cite{Kes73}) tells us that, under some extra assumptions, 
there are constants $\alpha, C>0$ and a continuous strictly positive function $r$ on $\bb S^{d-1}_+$ such that, 
for every $x \in \bb S^{d-1}_+$ and $b > a > 0$, 
\begin{align}\label{eq:limit kesten}
\lim_{t \to \infty} t^\alpha \nu \left( \left\{ z \in  \bb R^{d}_+ \, : \, \scal{x}{z} \in t[a,b]  \right\}  \right) 
= C \left( a^{-\alpha} - b^{-\alpha} \right) r(x) =  C r(x) \int_a^b \frac{\alpha}{s^{1+\alpha}} ds. 
\end{align}
Here and below, we write $t [a, b] = [t a, t b]$ and denote by $\bb S^{d-1}_+=\{ x \in \bb R^d_+ \, : \, |x|=1\}$ the intersection of the unit sphere with the positive orthant. 
Kesten's work has stimulated extensive subsequent work and we refer to the textbook \cite{BDM16} for a survey of the field.

In the {\it critical regime} $\lambda = 0$, the contraction mechanism disappears and the situation is fundamentally different. 
It turns out that the invariant object is no longer a probability measure but an infinite Radon measure.
In the one-dimensional case $d=1$, the existence and uniqueness, up to scalar multiplication, of $\nu$ was proved by Babillot, Bougerol and Elie \cite{BBE97}, see also \cite{AI23,BB15} for recent contributions. In \cite{BBE97}, the asymptotic behavior of this measure was also studied, with precise asymptotics being obtained by  Buraczewski \cite{Bur07} using renewal methods and Poisson equations. 
More specifically, the invariant measure satisfies a logarithmic tail homogeneity property: 
there is a constant $C>0$ such that for any $b > a > 0$, 
\begin{align}\label{eq:limit critical 1d}
\lim_{t \to \infty} \nu \left(  \left\{ z  \in \bb R \, : \, |z| \in t [a,b]  \right\}  \right) 
= C \log \frac{b}{a} = C\int_a^b \frac{1}{s} \, ds. 
\end{align}
The natural limiting measure in the critical regime is therefore the Haar measure
$ds/s$ on the multiplicative group $(0,\infty)$. See also \cite{BBD12, BB15, Kol13} for extensions to more general recursions.

For higher dimensions $d \geq 2$, the study is considerably more delicate and requires new ideas.  
The main difficulties arise from the non-commutativity of matrices and the need for a
Markov renewal theory for centered Markov random walks driven by products of random matrices. 
It was proved recently in \cite[Theorem 1.1]{BPP21} 
that  the Markov chain $(V_n)_{n \ge 0}$ possesses an -- up to scaling -- unique invariant Radon measure $\nu$ 
on $\bb R_+^d$ with infinite total mass, see also \cite{ABP24} 
for a study of recurrence properties of $(V_n)_{n \ge 0}$ under relaxed assumptions.  
Subject to conditions to be detailed below, 
it was also proved in \cite[Theorem 1.1]{BPP21} that 
there exist a positive slowly varying function $L: \bb R_+ \to \bb R_+$ and constants $a,b,c>0$ such that for any $t \geq 1$, 
\begin{equation}\label{eq:result.BPP}
L(t) \le \nu \left(  \left\{ x \in \bb R^d_+ \, : \,  |x| \in t [a, b]  \right\} \right) \le c L(t).
\end{equation}
A function $L: \bb R_+ \to \bb R_+$ is called slowly varying at infinity if $\lim_{t \to \infty} \frac{L(ts)}{L(t)} = 1$ for any $s>0$. 
We note that the interval $[a,b]$ is fixed in the statement \eqref{eq:result.BPP}. 

Using recent progress on the renewal theory and duality theory for centered products of random matrices in \cite{GMX25}, 
our first main result, stated as Theorem \ref{thm:main3} below, 
proves that the slowly varying function $L(t)$ is indeed bounded. 
Consequently, for each fixed interval $[a, b]$, the radial mass $\nu ( \{ z  \in \bb R^d_+ \, : \, |z| \in t [a,b] \} )$
 remains bounded from above and below by positive constants as $t \to \infty$.
Moreover, our main theorem (see Theorem \ref{thm:main2} below) establishes a directional analogue of this result. 
 To be precise, consider the Markov chain on $\bb S^{d-1}_+$  defined by the recursion 
$X_n: = \frac{A_n^{\top} X_{n-1}}{|A_n^{\top} X_{n-1}|}$. 
Under the assumptions of the paper, it possesses a unique invariant probability measure $\pi$.  
We are going to prove that for every $x$ in the support of $\pi$, there are constants $0<c_1\le C_1<\infty$ such that for all $0<a<b$, 
\begin{align}\label{Intro-entry-asy}
c_1 \log \frac{b}{a} & \le \liminf_{t \to \infty} \nu \left(  \left\{ z  \in \bb R^d_+ \, : \, \scal{x}{z} \in t [a,b]  \right\}  \right)  \notag\\
& \le \limsup_{t \to \infty} \nu \left(  \left\{ z  \in \bb R^d_+ \, : \,  \scal{x}{z} \in t [a,b] \right\}  \right) \le C_1 \log \frac{b}{a}. 
\end{align}

In the one-dimensional case, and in the case where $A$ belongs to the group of similarity matrices 
(product of a dilation coefficient and of an orthogonal matrix), 
it was proved in \cite{BBD12, Bur07} and \cite{Kol13}, respectively, that \eqref{Intro-entry-asy} holds with $c_1 =C_1$, 
which corresponds to reaching the exact limit as in \eqref{eq:limit critical 1d} for $\nu ( \{ z \in \bb R^d_+ \, : \,  \scal{x}{z} \in t [a, b]  \} )$. 
Obtaining a precise limit in the present multivariate setting would require some exact asymptotics 
in the conditioned central and local limit theorems for Markov random walks, which are currently out of reach. 

The remainder of the paper is organized as follows. In Section \ref{Sec main results} we introduce the necessary 
notation and assumptions and state the main results. Section \ref{sec-prelimin} collects the
probabilistic and renewal-theoretic ingredients needed in the sequel, including the
estimates for the associated centered Markov random walk. 
The proofs of the main results are given in Section \ref{Section-proof}.

%%%%%%%%%%%%%%%%%%%%%%%%%%%%%%%%%%%%%%%%%%%%%%%
%%%%%%%%%%%%%%%%%%%%%%%%%%%%%%%%%%%%%%%%%%%%%%%

\section{Notation, assumptions and main results}\label{Sec main results}

%%%%%%%%%%%%%%%%%%%%%%%%%%%%%%%%%%%%%%%%%%%%%%%
%%%%%%%%%%%%%%%%%%%%%%%%%%%%%%%%%%%%%%%%%%%%%%%

\subsection{Notation} 
A $d \times d$ matrix $g$ with nonnegative entries is called \emph{allowable} 
if every row and every column of $g$ has at least one strictly positive entry. 
Let $\bb M$ be the semigroup of allowable nonnegative $d\times d$ matrices, 
and let $\bb M_+$ be the subsemigroup of $\bb M$ with strictly positive entries. 
By the Perron-Frobenius theorem, 
every $g \in \bb M_+$ admits an algebraically simple positive eigenvalue $\lambda_g$ equal to its
spectral radius, with a corresponding unit eigenvector $v_g$ having strictly positive coordinates. 
The action of $g \in \bb M$ on a vector $v \in \bb R_+^d$ is denoted by $gv$.
Recall that, for any $v \in \bb R^d$, the associated norm is denoted by $|v| = \sum_{i=1}^d |\langle v, e_i\rangle |$,
where $(e_i)_{1 \leq i \leq d}$ is an orthonormal basis of $\bb R^d$. 
For any $g \in \bb M$, set $\| g \| = \sup_{v \in \bb R^d_+ \setminus \{0\} } \frac{|gv|}{|v|}$
and $\iota(g)  = \inf_{v \in \bb R^d_+ \setminus \{0\} } \frac{|gv|}{|v|}$. 
In terms of the entries of $g$, we have
\begin{align}\label{def-norm-g-iota}
\| g \| = \max_{1 \leq j \leq d} \sum_{i = 1}^d  g_{i, j}
\quad 
\mbox{and}
\quad
\iota(g) = \min_{1 \leq j \leq d} \sum_{i = 1}^d  g_{i, j}. 
\end{align}
We also write $N(g)= \max\{ \iota(g)^{-1}, \|g\|\}$.

The topological interior and closure of a set $E \subset \bb R^d$ are denoted by $E^\circ$ and $\overline{E}$, respectively. 
The unit sphere in $\bb R^d$ is denoted by $\bb S^{d-1}=\{ v \in \bb R^d : | v | =1 \},$
and we set  
%Let  $\bb S_+^{d-1}$ be the intersection of the unit sphere with the positive quadrant: 
$\bb S_+^{d-1} = \bb S^{d-1} \cap \bb R_+^{d}$.  
%The interior of $\bb S_+^{d-1}$ is denoted by $(\bb S_+^{d-1})^\circ$. 
%The interior of a measurable set $D$ is denoted by $D^\circ$. 
For any  $g \in \bb M$, we can define its  action on $x \in \bb S_+^{d-1} $ by $g \cdot x = \frac{g x}{|gx|}$. 
The allowability of $g$ ensures that $gx \neq 0$, so this action is well defined.

Denote by $\textrm{Aff}(\bb R_+^d) = \bb M \ltimes \bb R_+^d$ the semigroup of affine mappings of $\bb R_+^d$. 
Let $(A_n, B_n)_{n\geq 1}$ be a sequence of independent and identically distributed $\textrm{Aff}(\bb R_+^d)$-valued random elements on 
a probability space $(\Omega,\mathscr F,\bb P)$, 
with common law $\mu$ and the generic pair $(A,B)$.
Denote by $\overline \mu$ the
marginal law of $A$ %with values in $\bb M$, 
and write $\supp \overline\mu$ for its support. % of the measure $\overline\mu$

The  Markov chain on $\bb S^{d-1}_+$ relevant to the tail problem is naturally
associated with the transposed matrices. 
For $x \in \bb S^{d-1}_+$, let $X_0^x=x$, $S_0^x=0$ and, for $n\geq 1$,  
\begin{equation}\label{def-Xn-Sn}
	 X_n^x = (A_n^\top \cdots A_1^\top) \cdot x, \qquad S_n^x = \log |A_n ^\top \cdots A_1^\top x|, 
\end{equation}
where $A_n^\top$ is the transpose of the matrix $A_n$. 
%Under the probability $\bb P$, for any $x\in \bb S^{d-1}_+$, the sequence 
Thus $(X_n^x)_{n\geq 0}$ is a Markov chain on $\bb S^{d-1}_+$,
while $(X_n^x, S_n^x)_{n\geq 0}$ 
%new Markov chain on $\bb S^{d-1}_+ \times \bb R_+$ called Markov random walk. 
is a Markov random walk on $\mathbb S_+^{d-1}\times\mathbb R$.
The second coordinate is referred to as the additive component of the Markov random walk. 
%The second coordinate will be referred to as the additive component
%of the Markov random walk. 
%Denote by $\bb P_x$ and $\bb E_x$ the probability measure and the expectation 
%induced by the Markov walk $(X_n^x)$ on the canonical space $(\bb S^{d-1}_+)^{\bb N}$. 
%Therefore, under the measure  $\bb P_x$ and the expectation $\bb E_x$,  
%we can omit the superscript $x$ in the writing of $X_n^x$
%and in that of $S_n^x$. 

\subsection{Assumptions}

Let $\Gamma$ be the closed subsemigroup of nonnegative $d \times d$ matrices generated by $\supp \overline\mu$. 
We require the following allowability and positivity condition. 
%The set $\Gamma$ is said to be allowable if every $g\in \Gamma$ is allowable.

\begin{conditionA}\label{proximality}
Every $g \in \Gamma$ is allowable and $\Gamma$ contains at least one strictly positive matrix.
\end{conditionA}

We also impose a moment and nondegeneracy assumption on the pair $(A,B)$:
\begin{conditionA}\label{Condition-moments-A-B} 
There exists a constant $\delta>0$ such that 
$\bb E \big[ N(A)^{\delta}\big]<\infty$ and $\bb P\big( \iota(A) \ge 1 + \delta \big)>0$ as well as $\bb E (|B|^{\delta}) < \infty$ and $\bb E (|B|^{-\delta}) < \infty$. 
\end{conditionA}

Note that assumptions \ref{proximality} and \ref{Condition-moments-A-B} guarantee the existence and finiteness of 
the Lyapunov exponent $\lambda$ defined by \eqref{def Lyapunov exponent}, see for example \cite[Theorem 6.1]{BDGM14}.

The assumption $\bb P\big( \iota(A) \ge 1 + \delta \big)>0$ in \ref{Condition-moments-A-B} ensures that 
the elements in the support of $(A, B)$ have no common fixed point on $\bb R_+^d$, i.e., 
for any $x \in \bb R_+^d$, we have $\bb P (Ax + B = x) < 1$. 
Indeed, take such $A$ with $\iota(A) \geq 1 + \delta$, which means that for any $x \in \bb R_+^d$, 
it holds that $|Ax| \geq (1 + \delta) |x|$. 
For $x \neq 0$, since $B \in \bb R_+^d$, we have $\lvert Ax+B \rvert \ge \lvert Ax \rvert > \lvert x \rvert,$ 
so $Ax+B=x$ is impossible on this event. 
For $x=0$, we use that $\bb P(B \neq 0)=1$ due to \ref{Condition-moments-A-B}.

In addition, we impose the following non-arithmeticity assumption. 

\begin{conditionA}\label{Condition-dense}
The additive subgroup of $\bb R$ generated by $\{ \log \lambda_g \, : \,  g \in \Gamma \cap \bb M_+\}$ is dense in $\bb R$.
\end{conditionA}

As stated in the introduction, the present paper is devoted to the
critical regime.
\begin{conditionA}\label{Condition-Lyapunov-zero}
The Lyapunov exponent vanishes: $\lambda = 0$. 
%$$ \gamma =\lim_{n \to \infty} \frac{1}{n} \log \norm{A_n \cdots A_1} =0.$$
\end{conditionA}

We further require the following {\em Furstenberg-Kesten} condition on the entries of the matrices.
\begin{conditionA}\label{Condition-Furstenberg-Kesten}
There exists a constant $\varkappa > 1$ such that for all $g = (g_{i,j})_{1 \le i,j \le d} \in \supp \overline\mu$, 
\begin{equation*}
\frac{\max_{1 \le i,j \le d} g_{i,j}}{\min_{1 \le i,j \le d} g_{i,j}} \le  \varkappa.
\end{equation*}
\end{conditionA}
This condition enables us to employ the renewal theoretic results from \cite[Section 7]{GMX25}. 
Note also that assumption \ref{Condition-Furstenberg-Kesten} implies that 
$\Gamma \subset S_{\varkappa^{-2}}$ in the notation of \cite{BPP21}, 
see the comments after \cite[Lemma 2.1]{BPP21}.
%\todo{Check that we can use the constant $\varkappa$ in \ref{Condition-Furstenberg-Kesten}}
It is worth mentioning that under condition \ref{Condition-Furstenberg-Kesten}, 
we have $\| g \| \leq \varkappa \, \iota(g)$, so that $\iota(g)^{-1} \leq \varkappa \|g\|^{-1}$. 
%{\color{magenta}Hence, in order to have $\bb{E} [ \exp (\delta \iota(A)^{-1})]<\infty$ in \ref{Condition-moments-A-B}, 
%it suffices to assume $\bb{E} [ \exp (\delta \varkappa \|g\|^{-1})] < \infty$. }

Finally, we need the following nondegeneracy assumption. 

\begin{conditionA}\label{Condition-Irreducibility}
There is no affine subspace $W \subset \bb R^d$ such that $W \cap \bb R_+^d$ is non-empty, bounded and invariant under the action of all $g \in \supp \bar{\mu}$.
\end{conditionA}

Under assumptions \ref{proximality}-\ref{Condition-Irreducibility}, 
\cite[Theorem 1.1]{BPP21} provides us with the existence and uniqueness, up to multiplication by a positive constant,  
of an invariant Radon measure $\nu$ on $\bb R_+^d$ for the Markov chain $(V_n)_{n \geq 0}$,
that is, a measure satisfying \eqref{def invariant measure}. 
The measure $\nu$ has infinite total mass and satisfies \eqref{eq:result.BPP} for some constants $a, b, c>0$ and a positive slowly varying function $L$.

\subsection{Main results}

It follows for instance from \cite[Theorem 2.1]{HH08}
that, under \ref{proximality} and \ref{Condition-moments-A-B},
%if $\int \log N(g) \mu(dg) < \infty$, $\Gamma$ is allowable and contains a strictly positive matrix, 
 there is a unique stationary probability measure $\pi$ for the Markov chain $(X_n^x)_{n \geq 0}$ on $\bb S^{d-1}_+$. 
Its support is given by  
 $$ \overline{\left\{ v_{g^\top} \in \bb S^{d-1}_+ \, : \, g \in  \Gamma \cap \bb M_+  \right\}},$$ 
see \cite[Lemma 4.3]{BDGM14} for a proof.

Our first main result gives a uniform logarithmic description of the
tail of $\nu$ along every direction in $\operatorname{supp}\pi$. 

\begin{theorem}\label{thm:main2}
Assume \ref{proximality}-\ref{Condition-Irreducibility}. 
Then, there exist constants $0<c_1\le C_1<\infty$ such that for all $0<a<b$ 
%\begin{align}\label{eq:our.result.norm}
%c_1 \log \frac{b}{a} & \le \liminf_{t \to \infty} \nu \left(  \left\{ z  \in \bb R^d_+ \, : \, |z| \in t [a,b]  \right\}  \right)  \notag\\
%& \le \limsup_{t \to \infty} \nu \left(  \left\{ z  \in \bb R^d_+ \, : \,  |z| \in t [a,b] \right\}  \right) \le C_1 \log \frac{b}{a}, 
%\end{align}
and $x \in \supp \pi$, 
\begin{align}\label{eq:our.result.scalar}
c_1 \log \frac{b}{a} & \le \liminf_{t \to \infty} \nu \left(  \left\{ z  \in \bb R^d_+ \, : \, \scal{x}{z} \in t [a, b]  \right\}  \right)  \notag\\
& \le \limsup_{t \to \infty} \nu \left(  \left\{ z  \in \bb R^d_+ \, : \,  \scal{x}{z} \in t [a, b] \right\}  \right) \le C_1 \log \frac{b}{a}. 
\end{align}
\end{theorem}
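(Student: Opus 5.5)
Since $\nu$ is only known to be invariant, I would first represent it through a renewal-type formula for the Markov random walk $(X_n^x,S_n^x)$. Fix $x \in \supp\pi$. For a nonnegative compactly supported $f$ on $(0,\infty)$, put $\varphi_f(z) = f(\scal{x}{z})$. Invariance iterated $n$ times gives
\[
\nu(\varphi_f) = \bb E \int f\bigl(\scal{x}{A_1\cdots A_n z} + \scal{x}{R_n}\bigr)\,\nu(dz),
\]
where $R_n = \sum_{k\le n} A_1\cdots A_{k-1}B_k$. Here I use the time-reversed order, which has the same law. The key identity is $\scal{x}{A_1\cdots A_n z} = \scal{A_n^\top\cdots A_1^\top x}{z} = e^{S_n^x}\scal{X_n^x}{z}$. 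This turns the directional tail into a question about the walk $S_n^x$, which is centered by \ref{Condition-Lyapunov-zero}.

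Following the strategy of \cite{Bur07} and \cite{BPP21}, I would compare $\nu(\varphi_f)$ at scale $t$ with a potential-kernel expression. One natural choice: $\nu(\{z:\scal{x}{z}\in t[a,b]\})$ is comparable to a stopped sum over the times when $S_n^x$ lies in $\log t + [\log a,\log b]$ before a suitable stopping time, started from $\nu$ restricted to a compact set $K$. This is the classical representation of a recurrent-chain invariant measure as expected occupation between returns to $K$ (Kac/regeneration). Concretely, choose a compact $K\subset\bb R^d_+$ with $0<\nu(K)<\infty$. Then
\[
\nu(F) = \int_K \bb E_z\Big[\sum_{n=0}^{\tau_K-1}\mathbf 1_F(V_n)\Big]\nu(dz).
\]
For $V_n$ far from $K$, the additive component dominates: $\scal{x}{V_n}\approx e^{S}\cdot(\text{bounded factor})$. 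By \ref{Condition-Furstenberg-Kesten}, the ratio between $\scal{x}{gz}$ and $|g^\top x|\,|z|$ is bounded by constants depending only on $\varkappa$. So up to multiplicative constants, the occupation of $\{\scal{x}{V_n}\in t[a,b]\}$ before returning to $K$ is the occupation of the walk $\log|V_n|$ in a window of length $\log(b/a)$ at height $\log t$, before it drops below a fixed level.

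The heart of the proof is then a renewal theorem for the centered Markov random walk killed when exiting a half-line. I would invoke \cite[Section 7]{GMX25}, which (using duality) gives that the Green function $\bb E\sum_{n<\tau}\mathbf 1\{S_n\in y+[0,h]\}$ converges, or at least is bounded above and below, by $c\,h$ as $y\to\infty$, uniformly in the starting point. Here the factor $V(\text{start})\cdot V^*(y)/y$ behaves like a constant because both harmonic functions are asymptotically linear. Since $\log(b/a)=h$, this produces the bounds $c_1\log(b/a)$ and $C_1\log(b/a)$. The constants absorb the $\varkappa$-distortions once the window is large. For small windows I would use the non-arithmeticity \ref{Condition-dense} so that the local renewal bound holds for arbitrary $h$. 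The restriction $x\in\supp\pi$ enters because the lower bound requires the direction $x$ to be charged by the dual chain; outside the support the forward/dual Markov renewal theorem in \cite{GMX25} gives no positivity.

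The main obstacle is controlling the perturbation $R_n$ and the comparison between $\scal{x}{V_n}$ and $e^{S_n}$ during excursions. The additive term is not negligible when the walk is near its minimum. I would handle this by splitting excursions at the first time $S_n$ exceeds a large level $M$. Before that time, the contributions at height $\log t\gg M$ vanish. After it, $V_n = A_1\cdots A_n(V_0 + \text{small})$ up to factors controlled by the moment assumptions \ref{Condition-moments-A-B} on $B$. The integrability of the starting measure on $K$ follows from $\nu$ being Radon and from the bound \eqref{eq:result.BPP}, which I would use only for uniform integrability.
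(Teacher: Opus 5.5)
There is a genuine gap, and it lies exactly where the statement is sharp. The constants $c_1,C_1$ must not depend on $a<b$, so the bounds must be proportional to $\log(b/a)$ also for narrow windows.

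Every step of your reduction holds only up to multiplicative constants. Replacing $\scal{x}{V_n}$ by $|V_n|$ costs a factor between $\min_i x_i$ and $1$. Passing to $e^{S}$ adds $\varkappa$-dependent factors, and discarding the additive part adds further bounded factors. On the logarithmic scale this moves the window endpoints by $O(1)$. So at best you get an upper bound $C(\log(b/a)+1)$. From below you get nothing when $b/a\le 1/\min_i x_i$, because the inner comparison set $\{z:|z|\in t[a/\min_i x_i,\,b]\}$ is then empty.

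Non-arithmeticity does not repair this. A narrow window for $\log\scal{x}{V_n}=\log|V_n|+\log\scal{x}{\bar V_n}$ is not a narrow window for the radial walk. You would therefore need a renewal \emph{limit} theorem, with positive limit, for the pair (direction, height) of the killed and additively perturbed chain. What the paper extracts from \cite{GMX25} (Proposition~\ref{Prop-renewal-thm}) is only the bound $c\max\{h,1\}$, which is not proportional to $h$ for $h<1$. The paper also remarks that the exact conditioned asymptotics for Markov random walks that would be needed are currently out of reach.

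Nor can a narrow-window lower bound be deduced from wide-window bounds. A measure on $(0,\infty)$ with unit masses at the points $e^k$, $k\in\mathbb{Z}$, satisfies the latter but not the former.

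The excursion comparison is also unsound as sketched. The Kac formula involves the forward chain, for which $\scal{x}{V_n}$ is driven by $A_n\cdots A_1$ acting on the direction of $V_0$, not by $(X_n^x,S_n^x)$. Your identity $\scal{x}{A_1\cdots A_n z}=e^{S_n^x}\scal{X_n^x}{z}$ belongs to the backward iteration, which has no return-time structure. Moreover, after the walk passes a level $M$ it can come back down before $V$ re-enters $K$, and there the $B_k$ are not negligible.

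The paper gets the $\log(b/a)$ structure from a different mechanism. First it proves that the slowly varying function $L$ of \cite{BPP21} is bounded. It uses the Poisson equation for $f_\Phi(y)=\int\Phi(\scal{y}{z})\,\nu(dz)$, whose defect $\Psi_\Phi$ is directly Riemann integrable with $\int\Psi_\Phi(e^{-s}x)\,ds=0$. This equation is stopped at the ladder time $T$ and averaged over the stationary law of the ladder chain $X_{T_n}$. The result is two-sided bounds for one fixed wide window, and vague relative compactness of the dilations $\nu(e^s\,\cdot)$.

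Then, for any vague limit point $\eta$, the function $F_\Phi(y)=\int\Phi(\scal{y}{z})\,\eta(dz)$ is superharmonic for the transposed walk, so $F_\Phi(e^{S_n^x}X_n^x)$ converges almost surely. Combined with recurrence of $(X_n^x,S_n^x)$ in every $E\times G$ with $\pi(E)>0$ and $G$ open \cite{KM15}, this forces $F_\Phi$ to be constant on the cone over $\supp\pi$. This is where non-arithmeticity and $x\in\supp\pi$ genuinely enter. Hence the projection of $\eta$ along $x$ is $C'\,ds/s$, and the wide-window bounds confine $C'$ to a fixed interval $[c,C]\subset(0,\infty)$. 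Your proposal has no substitute for this Choquet--Deny step, which is what yields proportionality to $\log(b/a)$ for arbitrary windows.
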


Note that our result \eqref{eq:our.result.scalar} can be understood as the analogue of \eqref{eq:limit kesten} for $\alpha=0$, recalling that $\int_a^b \tfrac{ds}{s}=\log \tfrac{b}{a}$. Indeed, the critical case corresponds to $\alpha=0$, in the following sense. 
The function $m(s) = \lim_{n \to \infty} \bb{E} (\norm{A_n \cdots A_1}^s)^{1/n}$ is strictly convex with $m(0)=1$ and $m'(0)=\lambda$ (the top Lyapunov exponent in \eqref{def Lyapunov exponent}), see \cite[Theorem 6.1]{BDGM14}. In the contractive case, $m'(0)<0$ and $\alpha$ is defined as the unique positive value with $m(\alpha)=1$, given its existence. In the critical case, $m'(0)=0$, and $0$ is the only solution of the equation $m(s)=1$. In this sense, one can assign the value $\alpha=0$ to the critical case.

There is a simple situation in which Theorem \ref{thm:main2}
immediately yields corresponding estimates for radial annuli.  
If the vector $(1/d, \ldots, 1/d) \in \bb S^{d-1}_+$ is in $\supp \pi$, 
then, after a rescaling of the interval, 
Theorem \ref{thm:main2} applies directly to $\nu ( \{ z  \in \bb R^d_+ \, : \, |z| \in t [a, b]  \} )$, 
thereby extending \eqref{eq:result.BPP}. 
If this is not the case, we still have the following result.

\begin{theorem}\label{thm:main3}
Assume \ref{proximality}-\ref{Condition-Irreducibility}. 
Then, for any $0<a<b$, there are constants $c_1\le C_1<\infty$ such that 
\begin{align*}%\label{eq:our.result.norm02}
c_1 & \le \liminf_{t \to \infty} \nu \left(  \left\{ z  \in \bb R^d_+ \, : \, |z| \in t [a,b]  \right\}  \right)  \notag\\
& \le \limsup_{t \to \infty} \nu \left(  \left\{ z  \in \bb R^d_+ \, : \,  |z| \in t [a,b] \right\}  \right) \le C_1.  
\end{align*}
Moreover, a sufficient condition for $c_1$ to be positive is $b/a > \varkappa d$. 
\end{theorem}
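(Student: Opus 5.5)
The plan is to deduce Theorem \ref{thm:main3} from Theorem \ref{thm:main2} by comparing the norm $|z|$ with the scalar product $\scal{x}{z}$ for a fixed direction $x \in \supp \pi$. Fix any $x_0 \in \supp\pi$. The support of $\pi$ is the closure of Perron eigenvectors $v_{g^\top}$ of strictly positive matrices $g \in \Gamma$. Under \ref{Condition-Furstenberg-Kesten}, the entries of every element of $\Gamma\cap\bb M_+$ have ratio max/min bounded by $\varkappa$, because products of matrices whose columns are comparable stay comparable. Hence each $v_{g^\top}$ is a normalized positive combination of columns of $g^\top$, and so has coordinates satisfying $\max_i (x_0)_i \le \varkappa \min_i (x_0)_i$. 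Since $|x_0| = 1$, this yields $(\varkappa d)^{-1} \le (x_0)_i \le \varkappa/d$ for all $i$. Consequently, for every $z\in\bb R^d_+$,
\begin{align*}
(\varkappa d)^{-1}|z| \le \scal{x_0}{z} \le \max_i (x_0)_i\,|z| \le \varkappa d^{-1} |z| .
\end{align*}
The sharper upper bound $\max_i (x_0)_i \le 1$ is also available.

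\emph{Upper bound.} If $|z|\in t[a,b]$, then $\scal{x_0}{z}\in t[a/(\varkappa d), b]$. Theorem \ref{thm:main2} then gives
\[
\limsup_{t\to\infty} \nu(\{|z|\in t[a,b]\}) \le C_1\log(\varkappa d\, b/a) < \infty .
\]

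\emph{Lower bound.} Conversely, suppose $\scal{x_0}{z}\in t[a',b']$. Then $|z| \in t[a', \varkappa d\, b']$, using $|z|\ge \scal{x_0}{z}/\max_i(x_0)_i \ge \scal{x_0}{z}$ and $|z| \le \varkappa d \scal{x_0}{z}$. We need $a' \ge a$ and $\varkappa d\, b' \le b$, so we choose $a'=a$ and $b'=b/(\varkappa d)$. This is a nondegenerate interval exactly when $b/a>\varkappa d$. In that case Theorem \ref{thm:main2} yields
\[
\liminf_{t\to\infty}\nu(\{|z|\in t[a,b]\})\ge c_1\log\bigl(b/(a\varkappa d)\bigr)>0 .
\]
Without this condition we only claim the trivial bound $c_1=0$, which is consistent with the statement.

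\emph{Main obstacle.} The delicate step is the coordinate comparability of points of $\supp\pi$. Concretely, we must verify that \ref{Condition-Furstenberg-Kesten} on $\supp\overline\mu$ propagates to the generated closed semigroup $\Gamma$, or at least to the eigenvectors $v_{g^\top}$. The route is the observation that $g^\top y$, for any $y \in \bb R^d_+\setminus\{0\}$ and $g\in\supp\overline\mu$, has coordinates within ratio $\varkappa$ of each other. This holds because each coordinate is $\sum_j g_{j,i} y_j$, and the entries $g_{j,i}$ lie between $\min g$ and $\varkappa \min g$. Applying it with $y = X_{n-1}$ shows that $X_n$ lies in the closed cone $\{\max_i y_i\le\varkappa\min_i y_i\}$ for all $n\ge1$. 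Since this cone is closed and $\pi$ is stationary, $\pi$ is supported there, which gives exactly the constant $\varkappa d$ appearing in the statement. Also, the precise constants $c_1, C_1$ in Theorem \ref{thm:main2} are uniform in $x\in\supp\pi$, so fixing one $x_0$ suffices.
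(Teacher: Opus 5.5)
Your proof is correct and follows the paper's route: you sandwich $\{|z|\in t[a,b]\}$ between sets of the form $\{\scal{x}{z}\in t[\cdot,\cdot]\}$ using $c_0|z|\le\scal{x}{z}\le|z|$ with $c_0=\min_i x_i\ge 1/(\varkappa d)$ for $x\in\supp\pi$, and then apply Theorem \ref{thm:main2}. The paper takes this coordinate bound directly from Lemma \ref{lem equiv Kesten}, whereas you rederive it through the stationarity argument in your last paragraph.

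One small correction concerns your first paragraph. The claim that every element of $\Gamma\cap\bb M_+$ has entry ratio at most $\varkappa$ is false for products, where the ratio can reach $\varkappa^2$. What you actually need still holds: each column of $g^\top$ lies in the closed cone $\{\max_i y_i\le\varkappa\min_i y_i\}$, because the leftmost factor of $g^\top$ is the transpose of an element of $\supp\overline\mu$.
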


In particular, the mass assigned by $\nu$ to every fixed
multiplicative annulus remains bounded from above as the annulus is pushed to infinity. 
If, in addition, $b/a > \varkappa d$, then it is also bounded away from zero.
In this sense, Theorem \ref{thm:main3} removes the possible unbounded
slowly varying fluctuations left open by \eqref{eq:result.BPP}.

\section{Preliminaries}\label{sec-prelimin}

From now on, we assume \ref{proximality}-\ref{Condition-Irreducibility}
and use the notation introduced in Section \ref{Sec main results}. We show an integrability estimate for the
invariant Radon measure, introduce the Poisson equation associated with the tail problem, 
and state the renewal estimates for the 
centered Markov random walk that will be used in the proof of the main results.

\subsection{Integrability estimate for the invariant measure}
We begin with a consequence of the upper bound in \eqref{eq:result.BPP} and our moment assumption \ref{Condition-moments-A-B}. 
Although the invariant measure $\nu$ has infinite total mass, it has finite negative  moments both at infinity and at the origin.

\begin{lemma}\label{Lem-bound.nu-z.gamma}
For any $\gamma \in (0, \delta/2)$, with $\delta$ from the moment assumption \ref{Condition-moments-A-B}, we have 
$$ \int_{\bb R^d_+}  \frac{1}{|z|^\gamma} \nu(dz) < \infty.$$
\end{lemma}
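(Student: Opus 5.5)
Split the integral into the region at infinity $\{|z| \ge 1\}$ and the region near the origin $\{|z|<1\}$.

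\emph{Region at infinity.} Here the upper bound in \eqref{eq:result.BPP} suffices. Fix $a,b$ as in \eqref{eq:result.BPP}, put $\rho = b/a>1$ (we may assume $\rho>1$), and cover $\{|z|\ge a\}$ by the annuli $\{|z| \in \rho^k [a,b]\}$, $k \ge 0$. On the $k$-th annulus $|z|^{-\gamma} \le (a\rho^k)^{-\gamma}$, so
\begin{align*}
\int_{|z| \ge a} \frac{\nu(dz)}{|z|^\gamma} \le c\, a^{-\gamma} \sum_{k \ge 0} \rho^{-\gamma k} L(\rho^k).
\end{align*}
By Potter's bounds for slowly varying functions, $L(\rho^k) \le C_\epsilon \rho^{\epsilon k}$ for any $\epsilon>0$. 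Taking $\epsilon<\gamma$ makes the series converge. The compact set $\{a \le |z|\le 1\}$, if nonempty, has finite $\nu$-mass because $\nu$ is Radon.

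\emph{Region near the origin.} Here I would use invariance. Take $f(z) = |z|^{-\gamma}\mathbf 1_{\{|z|<1\}}$. This $f$ is not compactly supported, so first truncate it to $f_\epsilon = f \mathbf 1_{\{|z|\ge \epsilon\}}$, which is bounded with compact support, and then apply \eqref{def invariant measure} followed by monotone convergence. Since $A$ and $B$ are entrywise nonnegative, $|Az+B| \ge |B|$, hence $f_\epsilon(Az+B) \le |B|^{-\gamma}\mathbf 1_{\{|Az+B|<1\}}$. Therefore
\begin{align*}
\nu(f_\epsilon) \le \int_{\bb R^d_+} \bb E\big[ |B|^{-\gamma} \mathbf 1_{\{|Az|<1\}}\big]\,\nu(dz) \le \int_{\bb R^d_+} \bb E\big[ |B|^{-\gamma} \mathbf 1_{\{\iota(A)|z|<1\}}\big]\,\nu(dz).
\end{align*}
This bound is uniform in $\epsilon$. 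Split the $z$-integral into $|z|\le 1$ and $|z|>1$.
\begin{itemize}
\item For $|z|\le1$: bound the indicator by $1$. This gives $\nu(\{|z|\le 1\})\,\bb E|B|^{-\gamma}$, which is finite by \ref{Condition-moments-A-B} (since $\gamma<\delta$) and because $\nu$ is Radon.
\item For $|z|>1$: use Hölder,
\begin{align*}
\bb E\big[|B|^{-\gamma}\mathbf 1_{\{\iota(A)^{-1}>|z|\}}\big] \le (\bb E|B|^{-2\gamma})^{1/2}\, \bb P\big(\iota(A)^{-1}>|z|\big)^{1/2}.
\end{align*}
By Markov's inequality with the moment $\bb E N(A)^\delta<\infty$, we have $\bb P(\iota(A)^{-1}>|z|) \le C|z|^{-\delta}$, so this term is $\le C|z|^{-\delta/2}$. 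Its integral against $\nu$ over $\{|z|>1\}$ is finite by the first step applied with exponent $\delta/2$; that step works for any positive exponent.
\end{itemize}
This is exactly where the restriction $\gamma<\delta/2$ is needed: $2\gamma<\delta$ is what makes $\bb E|B|^{-2\gamma}$ finite.

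\textbf{Main obstacle.} The delicate point is the near-origin part, where the finiteness of $\int f\,d\nu$ is not known a priori. This is why the argument above uses the truncation $f_\epsilon$ and obtains a bound uniform in $\epsilon$, rather than applying invariance directly to $f$ and then subtracting quantities that might be infinite.
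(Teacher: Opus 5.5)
Your proof is correct and differs from the paper's only near the origin. Away from the origin you do what the paper does: cover by annuli and apply Potter's bound.

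\textbf{Comparison of the routes near the origin.} The paper first proves a small-ball estimate. Applying invariance to $\mathbf 1_{\bb B_+(0,t)}$, it gets $\nu(\bb B_+(0,t))\le \bb E[\mathbf 1_{\{|B|\le t\}}\,\nu(\bb B_+(0,\iota(A)^{-1}a))]$. It then bounds $\nu(\bb B_+(0,\iota(A)^{-1}a))\le c'(1+\iota(A)^{-\gamma/2})$ by summing annuli. Cauchy--Schwarz and Markov give $\nu(\bb B_+(0,t))\le c\,t^{\delta/2}$, and a dyadic decomposition of $\{|z|<a\}$ yields finiteness exactly when $\gamma<\delta/2$.

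You instead push the weight $|z|^{-\gamma}$ itself through the invariance equation. You use $|Az+B|=|Az|+|B|\ge\max\{|B|,\iota(A)|z|\}$ to replace $|Az+B|^{-\gamma}$ by $|B|^{-\gamma}$ and the constraint $|Az+B|<1$ by $\iota(A)^{-1}>|z|$. The resulting integral is finite by the Radon property near $0$, and by your first step (with exponent $\delta/2$) at infinity.

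What your route buys:
\begin{itemize}
\item It is shorter and skips both the intermediate ball estimate and the dyadic sum.
\item The truncation $f_\epsilon$ followed by monotone convergence correctly justifies using an identity that is stated only for compactly supported test functions.
\item With H\"older exponents $p=\delta/\gamma$ and $q=p/(p-1)$ in place of Cauchy--Schwarz, the same computation gives the lemma for every $\gamma<\delta$, since your first step handles $|z|^{-(\delta-\gamma)}$.
\end{itemize}
What the paper's route buys is the explicit bound $\nu(\bb B_+(0,t))\le c\,t^{\delta/2}$ and, as a by-product, $\nu(\{0\})=0$.

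\textbf{What you should add.} Your monotone convergence only controls $\int_{\{0<|z|<1\}}|z|^{-\gamma}\,\nu(dz)$. The integrand is $+\infty$ at $z=0$, so you must also show $\nu(\{0\})=0$. This takes one line: apply invariance to $\mathbf 1_{\{0\}}$ to get $\nu(\{0\})=\int\bb P(Az+B=0)\,\nu(dz)\le\int\bb P(B=0)\,\nu(dz)=0$, since $\bb E|B|^{-\delta}<\infty$ forces $B\neq0$ almost surely.

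\textbf{Minor slip.} In your first step, the leftover compact piece should be $\{1\le|z|<a\}$, which matters when $a>1$, not $\{a\le|z|\le1\}$. This does not affect the argument.
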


\begin{proof}
Let $0< a < b$ be the constants appearing in \eqref{eq:result.BPP} and set $r = \frac{b}{a} > 1$.  Denote $\bb B_+(0, c) = \{ z \in \bb R^d_+:  |z| \leq c \}$ for $c >0$.  
%Since $\nu$ is a Radon measure, there is a constant $c_1 > 0$ such that $\nu (\{z \in \bb R_+^d: |z| < a \}) \leq c_1$. 
We decompose the integral at $a$ and have
\begin{align*}%\label{}
 \int_{\bb R^d_+}  \frac{1}{|z|^\gamma} \nu(dz)  
& =  \sum_{n = 0}^{\infty}  \int_{\{z \in \bb R_+^d: |z| \in r^n [a, b)  \} }  \frac{1}{|z|^\gamma} \nu(dz)
+  \int_{\bb B_+(0, a) }  \frac{1}{|z|^\gamma} \nu(dz)   \notag\\
& = I_1 + I_2. 
\end{align*}
Concerning the first term $I_1$, by \eqref{eq:result.BPP}, there is $c_1 > 0$ 
such that $\nu (\{z \in \bb R_+^d: |z| \in r^n [a, b) \}) \leq c_1 L(r^n).$ 
Moreover, by Potter's theorem, the slowly varying function satisfies $L(t)\leq c_2 t^{\gamma/2}$ for some constant $c_2$ and all sufficiently large $t$. Consequently, using also that $\nu$ is a Radon measure, we have that 
\begin{align}\label{inequality nu annulus}
\nu \left( \left\{ z \in \bb R_+^d: |z| \in r^n [a, b)  \right\} \right) \leq c_3 (r^n)^{\gamma/2}. 
\end{align}
Hence,
\begin{align*}
I_1 \leq   \sum_{n = 0}^{\infty}  \frac{1}{(r^n a)^{\gamma}} \nu \left( \left\{ z \in \bb R_+^d: |z| \in r^n [a, b) \right\} \right) 
\leq   c_3  \sum_{n = 0}^{\infty}  \frac{1}{(r^n a)^{\gamma/2}}  < \infty. 
\end{align*}
%Note that if we work under the assumption that $|B|\geq c>0$, then $\nu(B_+(0,c))=0$ and it is sufficient to use the above bound for $I_1$.

Considering the second term $I_2$, observe that, for any $x \in \bb{R}^d_+$ and $0<t<a$, by writing $\bf 1 = (1, \ldots, 1)$, we have 
\begin{align*}
	|Ax+B|=\langle \bf 1,Ax+B\rangle \leq t 
	 & \quad  \Rightarrow \quad |B| \leq t \  \text{and} \ |Ax|=\langle \bf 1, Ax \rangle \leq t - \langle \bf 1, B \rangle = t-  |B|  \notag\\
	 & \quad  \Rightarrow \quad |B| \leq t \  \text{and} \ \iota(A)|x| \leq t-|B|  \notag\\
	 & \quad \Rightarrow  \quad |B| \leq t \  \text{and}  \  |x| \leq \frac{ t - |B| }{\iota(A)} \leq \frac{a}{\iota(A)}
\end{align*}
and, as a consequence of \eqref{def invariant measure}, for any $0<t<a$,
\begin{align}
	\nu(\bb B_+(0, t)) &= \bb E \int_{\bb R_+^d} \ds{1}_{[0,t]}(|Ax+B|) \, \nu(dx) \notag  \\
	&\leq \bb E \int_{\bb R_+^d} \ds{1}_{[0,t]}(|B|) \ds{1}_{[0,\iota(A)^{-1}a]} (|x|) \nu(dx) \notag \\
	& = \bb E \left[ \ds{1}_{[0,t]}(|B|) \, \nu \left( \bb B_+ \big( 0, \iota(A)^{-1}a \big) \right) \right]. \label{eq:bound nu at zero}
\end{align}
Note that this implies in particular that $\nu(\{0\})=0$. To further bound the behavior of $\nu$ close to the origin, we proceed as follows. 
On the event $\{\iota(A)^{-1} \leq 1 \}$, we simply use $\nu \left( \bb B_+ \big( 0, \iota(A)^{-1}a \big) \right) \leq \nu(\bb B_+(0,a)) \leq c$
for some constant $c>0$.
On the event $\{ \iota(A)^{-1} > 1 \}$, we use the bound \eqref{inequality nu annulus} to get that, 
with $r = \frac{b}{a}$ and $m = [- (\log \iota(A)) / \log \frac{b}{a}] + 1$,
\begin{align*}
\nu \left( \bb B_+ \big( 0,\iota(A)^{-1}a \big) \right)  
& \leq  \nu(\bb B_+(0,a)) +   \sum_{n=1}^{m} \nu \left( \left\{ z \in \bb R_+^d: |z| \in r^{n-1} [a, b)  \right\} \right)   \notag\\
& \leq  \nu(\bb B_+(0,a) ) +  c  \sum_{n=1}^{m}  r^{(n-1) \gamma/2}  \notag\\
& =   \nu(\bb B_+(0,a) ) +    c \frac{r^{m \gamma/2} -1}{r^{\gamma/2} - 1}  \notag\\
& \leq  c'  \left( 1 + \iota(A)^{- \gamma/2} \right). 
\end{align*}
Plugging this into \eqref{eq:bound nu at zero}, we have, using the Cauchy-Schwarz inequality,
\begin{align*}
\nu(\bb B_+(0,t) )
& \leq  c' \bb E \left[ \ds{1}_{[0,t]}(|B|) \,   \left( 1 + \iota(A)^{- \gamma/2} \right) \right]  \notag\\
&  \leq  c'  \bb P(|B| \leq t)^{1/2}  \left\{ \bb{E} \left[ \left( 1 + \iota(A)^{- \gamma/2} \right)^2 \right]  \right\}^{1/2}  \notag\\
& \leq 2 c' \bb P(|B| \leq t)^{1/2}  \big[ 1 + \bb E \big( \iota(A)^{- \gamma} \big) \big].
\end{align*}
By Assumption \ref{Condition-moments-A-B}, the second factor $\bb E (\iota(A)^{- \gamma})$ is finite if we choose $\gamma \leq \delta$. By the same assumption, $\bb{E}(|B|^{-\delta}) <\infty$ and hence, by an application of Markov's inequality,
we obtain that there exists a constant $c'' >0$ such that for any $0<t<a$, 
\begin{align}\label{nu on small ball radius t}
\nu(\bb B_+(0,t) ) \leq c'' \bb P(|B| \leq t)^{1/2} 
= c'' \bb P(|B|^{-1} \geq t^{-1})^{1/2} \leq c'' \left(  t^{\delta} \, \bb{E}(|B|^{-\delta}) \right)^{1/2}.
\end{align}
Now, using \eqref{nu on small ball radius t} we conclude that
\begin{align*}
I_2  & = \int_{\bb B_+(0, a) }  \frac{1}{|z|^\gamma} \nu(dz) \notag\\
& = \sum_{n=0}^\infty \int_{\{z \in \bb R_+^d: |z| \in [2^{-(n+1)}a,  \, 2^{-n} a)  \} } \frac{1}{|z|^\gamma} \nu(dz) \\
& \leq  \sum_{n=0}^\infty\frac{1}{(2^{-(n+1)}a)^\gamma }  \nu \left( \left\{z \in \bb R_+^d: |z| \in [2^{-(n+1)}a, 2^{-n} a)  \right\}  \right) \\
& \leq  \sum_{n=0}^\infty\frac{1}{(2^{-(n+1)}a)^\gamma }  \nu \big( \bb B_+(0,2^{-n}a) \big) \\
&  \leq c \sum_{n=0}^\infty\frac{1}{(2^{-(n+1)}a)^\gamma }  \frac{1}{(2^n/a)^{\delta/2}} <\infty,
\end{align*}
where the series is finite since $\gamma \in (0, \delta/2)$. 
This completes the proof of the lemma. 
\end{proof}

\subsection{The Poisson equation}

If $\bar \nu_x$ denotes the push-forward measure of $\nu$ under the map $z \mapsto \langle x,z \rangle$, 
then \eqref{eq:our.result.scalar} is a statement about dilations of $\bar \nu_x$. In particular, 
$$\nu \left( \left\{z \in \bb{R}^d_+:  \langle x, z \rangle \in e^s[a,b]  \right\} \right) 
= \bar \nu_x (e^s[a,b]) = \int_{\bb{R}_+}  \ds{1}_{[a,b]}(e^{-s}z) \, \bar\nu_x(dz).$$
We are going to study vague convergence of $\bar{\nu}_x(e^s \cdot)$, by evaluating its integral over a family of test functions.
We denote by $C^1_c(\bb R_+^*)$ the space of continuously
differentiable functions with compact support contained in $\bb R_+^* = (0, \infty)$ 
 and recall $\Rdpint =  (0, \infty)^d$. 
Let $\Phi \in C^1_c(\bb R_+^*)$ be nonnegative and
define
%For any nonnegative function $\Phi \in C^1_c(\bb R_+^*)$ where $\bb R_+^* = (0, \infty)$, 
%we consider 
\begin{align}\label{def-function-f-Phi}
f_\Phi(y) = \int_{\bb R^d_+}  \Phi\big( \langle y, z \rangle \big) \nu(dz), \quad  y \in \Rdpint.  
\end{align}
%??? return here
%We will study the behavior of $f_\phi(e^{-sy})$ as $s \to \infty$, which corresponds to 
Since $\Phi$ is compactly supported in $\bb R_+^*$ and $\nu$ is a Radon measure, 
$f_\Phi(y)$ is finite for every $y \in \Rdpint$. 
%\todo{This is not ture, e.g., take $y = (0, \ldots, 0, 1)$, the set $\{ z:  \langle y, z \rangle \in (0, 1) \}$ is not compact}
We now introduce the Poisson equation associated with the transform $f_\Phi$. 
For $y \in \Rdpint$, we define the function
\begin{align}\label{Poisson-Psi-Phi}
\Psi_{\Phi}(y) = \bb E f_\Phi \big(A^\top y\big) - f_\Phi(y).
\end{align}
Thus the function $\Psi_\Phi$ measures the defect of $f_\Phi$ from being harmonic
with respect to the transpose action of the random matrices. The
invariance of $\nu$ gives the following useful representation of $\Psi_\Phi$.

\begin{lemma}\label{Lem-formula-Psi-Phi}
Let $\Phi \in C^1_c(\bb R_+^*)$. Then, for any $y \in \Rdpint$, we have 
\begin{align*}%\label{}
\Psi_{\Phi}(y)
=    \int_{\operatorname{Aff}(\bb R_+^d)}  \int_{\bb R^d_+}
\Big( \Phi\big( \langle y, Az \rangle \big) -  \Phi\big( \langle y, Az + B \rangle \big)  \Big)  \nu(dz)  \mu(dA, dB). 
\end{align*}
\end{lemma}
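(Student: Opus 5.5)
The starting point is the definition \eqref{Poisson-Psi-Phi} together with \eqref{def-function-f-Phi}. First I will rewrite $\bb E f_\Phi(A^\top y)$ using the transpose identity $\langle A^\top y, z\rangle = \langle y, Az\rangle$:
\begin{align*}
\bb E f_\Phi(A^\top y) = \int_{\operatorname{Aff}(\bb R_+^d)} \int_{\bb R^d_+} \Phi\big(\langle y, Az\rangle\big)\, \nu(dz)\, \mu(dA,dB).
\end{align*}
Next I will use the invariance \eqref{def invariant measure} of $\nu$, applied to the function $z\mapsto \Phi(\langle y,z\rangle)$. This gives
\begin{align*}
f_\Phi(y) = \int_{\operatorname{Aff}(\bb R_+^d)}\int_{\bb R^d_+} \Phi\big(\langle y, Az+B\rangle\big)\,\nu(dz)\,\mu(dA,dB).
\end{align*}
Subtracting the two identities yields the claimed formula, provided both quantities are finite. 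Since $\Phi\ge0$ is allowed, Tonelli justifies the interchange of $\bb E$ and $\int\nu(dz)$ in each term separately. The subtraction is then legitimate once each term is finite.

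The main obstacle is this finiteness and the legitimacy of invoking \eqref{def invariant measure}, which is stated for compactly supported $f$. The function $z\mapsto\Phi(\langle y,z\rangle)$ is compactly supported in $\bb R^d_+$ when $y\in\Rdpint$. Indeed, if $\supp\Phi\subset[\alpha,\beta]$ with $\alpha>0$, then $\langle y,z\rangle\le\beta$ forces $|z|\le \beta/\min_i y_i$, and the function is continuous. So $f_\Phi(y)<\infty$ because $\nu$ is Radon, and invariance applies directly. For general (signed) $\Phi\in C^1_c$ I will split $\Phi=\Phi^+-\Phi^-$ and argue with each part, or simply note linearity once finiteness is known for $|\Phi|$.

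Finiteness of $\bb E f_{|\Phi|}(A^\top y)$ needs an argument, because $A^\top y$ is random. By \ref{Condition-Furstenberg-Kesten} and allowability, $A^\top y\in\Rdpint$ with $\min_i (A^\top y)_i \ge \varkappa^{-1}\|A\|\min_i y_i/d$ roughly, i.e.\ comparable to $\|A\|\,|y|$. Thus $f_{|\Phi|}(A^\top y)\le \|\Phi\|_\infty\,\nu\{z:|z|\le c\beta/(\|A\|\,|y|)\}$. For small $\|A\|$ this ball is large, so I will bound it by the polynomial growth \eqref{inequality nu annulus}, namely $\nu(\bb B_+(0,R))\le c(1+R^{\gamma/2})$. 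This requires $\bb E\|A\|^{-\gamma/2}<\infty$, which follows from $\bb E N(A)^\delta<\infty$ in \ref{Condition-moments-A-B}. Alternatively, I can bound the term by $\int |z|^{-\gamma}$-type estimates via Lemma \ref{Lem-bound.nu-z.gamma}, using $\Phi(\langle y,Az\rangle)\le C\langle y,Az\rangle^{-\gamma}\le C'(\|A\|\,|y|\,|z|)^{-\gamma}$. Either route gives finiteness and completes the proof.
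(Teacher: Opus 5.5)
Your proposal is correct and is essentially the paper's proof: apply the invariance \eqref{def invariant measure} to $z\mapsto\Phi(\langle y,z\rangle)$, rewrite $\langle A^\top y,z\rangle=\langle y,Az\rangle$, interchange the integrals by Fubini--Tonelli, and subtract. Your extra check that $\bb E f_{|\Phi|}(A^\top y)<\infty$ is valid, whether via $\nu(\bb B_+(0,R))\le c(1+R^{\gamma/2})$ or via Lemma \ref{Lem-bound.nu-z.gamma}; the paper leaves this finiteness implicit here and effectively supplies it only in Lemma \ref{Lem-upper-bound-Psi-Phi-y-minus}.
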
 

\begin{proof}
By \eqref{def-function-f-Phi} and the invariance of the measure $\nu$ (cf.\  \eqref{def invariant measure}), 
we get 
\begin{align*}%\label{}
f_\Phi(y) = \int_{\bb R^d_+}  \Phi\big( \langle y, z \rangle \big) \nu(dz)
=  \int_{\bb R^d_+}  \int_{\textrm{Aff}(\bb R_+^d)}  \Phi\big( \langle y, Az + B \rangle \big)  \mu(dA, dB) \nu(dz). 
\end{align*}
Note also that, using Fubini's theorem for nonnegative functions, 
\begin{align*}%\label{}
\bb E f_\Phi \big(A^\top y\big) 
&=  \bb E  \int_{\bb R^d_+}  \Phi\big( \langle A^\top y, z \rangle \big) \nu(dz) 
= \bb E  \int_{\bb R^d_+}  \Phi\big( \langle y, Az \rangle \big) \nu(dz) \\
&=  \int_{\bb R^d_+}  \int_{\textrm{Aff}(\bb R_+^d)}  \Phi\big( \langle y, Az \rangle \big)  \mu(dA, dB) \nu(dz)
\end{align*}
The conclusion follows by \eqref{Poisson-Psi-Phi}. 
\end{proof}

For any $y \in \bb R^d_+ \setminus\{0\}$, 
we denote 
\begin{align*}%\label{}
 y_i = \langle e_i, y \rangle 
\quad \mbox{and} \quad
\overline y = \frac{y}{|y|}. 
\end{align*}
Then, for any $y, z \in \bb R^d_+ \setminus\{0\}$,  
\begin{align}\label{Important-Inequality} 
\langle y, z \rangle \geq  \langle \bf 1, z \rangle  |y|  \min_{1 \leq i \leq d}  \overline y_i
=  |y| |z| \min_{1 \leq i \leq d}  \overline y_i. 
\end{align}
%This elementary inequality will be used repeatedly. 
%Denote by $C^1_c(\bb R_+^*)$ the space of compactly supported and differentiable functions on the interval $E \subseteq \bb R$.  
%hence the integral converges and $f_\Phi$ is finite on $\mathbb R^d$ (under the usual properties of $\nu$). 
Thanks to Lemma \ref{Lem-bound.nu-z.gamma}, 
%the following lemma provides its decay rate at infinity. 
the following estimate provides a quantitative
decay bound as $|y|\to\infty$, uniformly over directions that remain
away from the boundary of the positive cone.

\begin{lemma}\label{Lem-f-Phi-decay-rate-infinity}
Let $\Phi \in C^1_c(\bb R_+^*)$ and let $0< \gamma < \delta/2$, 
where $\delta$ is the exponent in \ref{Condition-moments-A-B}. 
%For any $\gamma \in (0, d)$ and $\Phi \in C^1_c(\bb R_+^*)$, 
Then, there is a constant $c >0$ (depending on $\gamma$ and $\Phi$) such that for all $y \in \Rdpint$, 
\begin{align*}
|f_\Phi(y)| \le c \Big( \min_{1 \leq i \leq d}  \overline y_i \Big)^{-\gamma}  |y|^{-\gamma}.
\end{align*}
\end{lemma}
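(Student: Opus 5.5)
The plan is to bound $\Phi$ by a power of its argument and then reduce everything to Lemma \ref{Lem-bound.nu-z.gamma}. Since $\Phi \in C^1_c(\bb R_+^*)$, there are constants $0<\alpha<\beta$ with $\supp \Phi \subset [\alpha,\beta]$, and $\Phi$ is bounded, say $\norm{\Phi}_\infty \le M$. For every $u>0$ we then have
\begin{align*}
|\Phi(u)| \le M \ds{1}_{[\alpha,\beta]}(u) \le M \beta^{\gamma} u^{-\gamma},
\end{align*}
because $u \le \beta$ on the support, so $(\beta/u)^\gamma \ge 1$ there. Substituting $u=\langle y,z\rangle$ into the definition \eqref{def-function-f-Phi} gives
\begin{align*}
|f_\Phi(y)| \le M\beta^\gamma \int_{\bb R^d_+} \langle y, z\rangle^{-\gamma} \ds{1}_{\{\langle y,z\rangle >0\}} \, \nu(dz).
\end{align*}

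Next I apply the elementary inequality \eqref{Important-Inequality}, which gives $\langle y,z\rangle \ge |y|\,|z| \min_i \overline y_i$ for $z \neq 0$. Here $\min_i \overline y_i>0$ because $y \in \Rdpint$. It follows that
\begin{align*}
\langle y,z\rangle^{-\gamma} \le \Big(\min_i \overline y_i\Big)^{-\gamma} |y|^{-\gamma} |z|^{-\gamma}.
\end{align*}
The point $z=0$ contributes nothing: the indicator vanishes there since $\langle y,0\rangle = 0 \notin [\alpha,\beta]$, and moreover $\nu(\{0\})=0$ was shown in the proof of Lemma \ref{Lem-bound.nu-z.gamma}.

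Integrating this bound against $\nu$ and invoking Lemma \ref{Lem-bound.nu-z.gamma}, which guarantees $\int |z|^{-\gamma}\nu(dz)<\infty$ for $\gamma\in(0,\delta/2)$, yields the claim with
\begin{align*}
c = M\beta^\gamma \int_{\bb R^d_+} |z|^{-\gamma}\,\nu(dz).
\end{align*}

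I do not expect any real obstacle. The whole difficulty, namely integrability of $|z|^{-\gamma}$ against the infinite measure $\nu$ both near infinity and near the origin, was already handled in Lemma \ref{Lem-bound.nu-z.gamma}. The only care needed is to use the compact support of $\Phi$ away from $0$, which is what makes $u^{-\gamma}$ a valid upper majorant of $|\Phi(u)|$ on all of $(0,\infty)$.
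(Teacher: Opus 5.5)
Your proposal is correct and follows essentially the same route as the paper's proof: bound $|\Phi(u)|\le c\,u^{-\gamma}$ using the compact support of $\Phi$ in $(0,\infty)$, apply \eqref{Important-Inequality} to get $\langle y,z\rangle^{-\gamma}\le (\min_i \overline y_i)^{-\gamma}|y|^{-\gamma}|z|^{-\gamma}$, and conclude with Lemma \ref{Lem-bound.nu-z.gamma}. Your treatment of the point $z=0$, which the paper leaves implicit, is a welcome extra detail.
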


\begin{proof} 
Since $\Phi$ has compact support contained in $\bb R_+^* = (0, \infty)$, 
for any $\gamma \in (0, \delta/2)$, there exists a constant $c>0$ (depending on $\gamma$ and $\Phi$)
 such that $|\Phi(t)| \leq c  t^{-\gamma}$ for all $t \in \bb R_+^*$.
Therefore, by \eqref{def-function-f-Phi} and \eqref{Important-Inequality}, 
there exists a constant $c' >0$ such that for any $y \in \Rdpint$, 
\begin{align*}%\label{}
|f_\Phi(y)| \leq  c  \int_{\bb R^d_+}  \frac{1}{\langle y, z \rangle^{\gamma}} \nu(dz)
\leq   \frac{c}{|y|^{\gamma}}  \Big( \min_{1 \leq i \leq d}  \overline y_i \Big)^{-\gamma}  \int_{\bb R^d_+}  \frac{1}{|z|^{\gamma}} \nu(dz) 
\leq \frac{c'}{|y|^{\gamma}}  \Big( \min_{1 \leq i \leq d}  \overline y_i \Big)^{-\gamma}, 
\end{align*}
where the finiteness of the integral is due to Lemma \ref{Lem-bound.nu-z.gamma}. 
\end{proof}

The following result shows that, under condition \ref{Condition-Furstenberg-Kesten}, the Markov chain $(X_n^x)_{n \geq 0}$ enters, 
after one step, a fixed compact subset of the interior of the positive sphere. 
For a matrix $g \in \bb{M}$, denote $g \cdot \bb S_+^{d-1} = \{ g \cdot x:  x \in  \bb S_+^{d-1}  \}.$ Further, abbreviate
$$\bb S^{d-1}_{+, \epsilon}
= \{ x \in \bb S_+^{d-1}:  \langle y, x \rangle \geq \epsilon \ \mbox{for all $y \in \bb S_+^{d-1}$} \}.$$

\begin{lemma} \label{lem equiv Kesten}
%Assume condition \ref{Condition-Furstenberg-Kesten}. 
%Let $A \cdot \bb S_+^{d-1} = \{ A \cdot x:  x \in  \bb S_+^{d-1}  \}.$
With $\epsilon =  \frac{1}{\varkappa d} \in (0, 1)$, we have 
\begin{align}\label{Inequality strict positivity}
A \cdot \bb S_+^{d-1} \subseteq  \bb S^{d-1}_{+, \epsilon}
\quad  \mbox{and} \quad  A^\top \cdot \bb S_+^{d-1} \subseteq  \bb S^{d-1}_{+, \epsilon}
\quad  \mbox{for $A \in \supp \mu$}. 
\end{align}
%where $\bb S^{d-1}_{+, \epsilon}
%= \{ x \in \bb S_+^{d-1}:  \langle y, x \rangle \geq \epsilon \ \mbox{for all $y \in \bb S_+^{d-1}$} \}.$
In particular, for all $x \in \bb S_+^{d-1}$ and $n \geq 1$, 
we have $\min_{1 \leq i \leq d} \langle e_i, X_n^x \rangle \geq \epsilon$, $\bb P$-almost surely. 
\end{lemma}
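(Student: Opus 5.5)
The inclusion \eqref{Inequality strict positivity} follows from a direct computation with the entries of $A$, using only condition \ref{Condition-Furstenberg-Kesten}. Fix $A=g \in \supp\overline\mu$ and write $M=\max_{i,j} g_{i,j}$ and $m=\min_{i,j} g_{i,j}$, so that $M\le \varkappa m$ and $m>0$. Take $x \in \bb S_+^{d-1}$, so that $x_j\ge 0$ and $\sum_j x_j=1$.

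First, each coordinate of $gx$ satisfies $(gx)_i=\sum_j g_{i,j}x_j \ge m$. Second, $|gx|=\sum_i\sum_j g_{i,j}x_j \le dM$. Hence
\begin{align*}
\langle e_i, g\cdot x\rangle = \frac{(gx)_i}{|gx|} \ge \frac{m}{dM} \ge \frac{1}{\varkappa d}=\epsilon .
\end{align*}
Now let $y \in \bb S_+^{d-1}$. Then
\begin{align*}
\langle y, g\cdot x\rangle=\sum_i y_i \langle e_i,g\cdot x\rangle \ge \epsilon \sum_i y_i=\epsilon,
\end{align*}
which gives $g\cdot x\in \bb S^{d-1}_{+,\epsilon}$. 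Since the ratio $\max g_{i,j}/\min g_{i,j}$ is unchanged under transposition, the same argument applies verbatim to $g^\top$. The fact that $\epsilon<1$ follows from $\varkappa>1$ and $d\ge1$.

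For the "in particular" statement, fix $n\ge1$. Almost surely every $A_k$ lies in $\supp\overline\mu$. By definition \eqref{def-Xn-Sn}, and because the projective action is compatible with products, we can write
\begin{align*}
X_n^x = A_n^\top\cdot\big((A_{n-1}^\top\cdots A_1^\top)\cdot x\big),
\end{align*}
where the inner point lies in $\bb S_+^{d-1}$. All matrices involved are allowable, so every vector appearing is nonzero and the normalizations are well defined. Applying the transposed inclusion with the last matrix $A_n^\top$ gives $X_n^x\in \bb S^{d-1}_{+,\epsilon}$. Choosing $y=e_i\in \bb S_+^{d-1}$ then yields $\langle e_i,X_n^x\rangle\ge\epsilon$.

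There is no real obstacle here. The only point requiring care is to use the normalization $|\cdot|$ as the $\ell^1$ norm, so that the bound $|gx|\le dM$ holds and the constant $\frac{1}{\varkappa d}$ comes out exactly. The statement is for $\supp\mu$ while the condition is stated for $\supp\overline\mu$, but this is harmless, since the $A$-marginal of any element of $\supp\mu$ lies in $\supp\overline\mu$.
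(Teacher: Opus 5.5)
Your proof is correct. The paper does not carry out this computation itself: it proves the inclusion by citing \cite[Lemma 2.2]{XGL25}, and it leaves the ``in particular'' part implicit.

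Your entrywise argument makes the lemma self-contained. For $x$ in the $\ell^1$-simplex you bound $(gx)_i \ge \min_{i,j} g_{i,j}$ and $|gx| \le d \max_{i,j} g_{i,j}$, and then use that $y \mapsto \langle y, g\cdot x\rangle$ is linear. This shows that only the ratio bound \ref{Condition-Furstenberg-Kesten} is needed. It also shows that the constant $1/(\varkappa d)$ comes exactly from the $\ell^1$ normalization of $\mathbb S^{d-1}_+$, which the citation leaves opaque.

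For the second claim, you apply the transposed inclusion to the last factor $A_n^\top$ alone and then test against $y = e_i$; this is exactly what the paper leaves unstated. Your remark that $(A,B) \in \supp\mu$ forces $A \in \supp\overline\mu$ also correctly resolves the small notational mismatch between the statement and assumption \ref{Condition-Furstenberg-Kesten}.
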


%\begin{lemma}\label{Lem-integrability-min}
%Under condition \ref{Condition-Furstenberg-Kesten}, 
%for all $x \in \bb S_+^{d-1}$, $n \geq 1$ and $\gamma >0$, we have 
%\begin{align*}%\label{}
%\bb E \left[ \min_{1 \leq i \leq d} \langle e_i, X_n^x \rangle^{-\gamma} \right]  \leq  \varkappa^{\gamma} d^{1 + \gamma}, 
%\end{align*}
%where $\varkappa$ is from \ref{Condition-Furstenberg-Kesten}. 
%\end{lemma}

\begin{proof}
The proof of \eqref{Inequality strict positivity} can be found in \cite[Lemma 2.2]{XGL25}. 
%Note that 
%\begin{align*}
%\bb E \left[ \min_{1 \leq i \leq d} \langle e_i, X_n^x \rangle^{-\gamma} \right]
%\leq \sum_{i = 1}^d  \bb E  \big(  \langle e_i, X_n^x \rangle^{-\gamma} \big). 
%\end{align*}
%By Lemma \ref{lem equiv Kesten}, with $\epsilon =  \frac{1}{\varkappa d} \in (0, 1)$, 
%we have $\bb E  \big(  \langle e_i, X_n^x \rangle^{-\gamma} \big)
% \leq  (\varkappa d)^{\gamma}.$
%%\begin{align*}%\label{}
%%\bb E  \big(  \langle e_i, X_n^x \rangle^{-\gamma} \big)
%% \leq  (\varkappa d)^{\gamma}. 
%%\end{align*}
%The assertion follows. 
\end{proof}

As a first application of the previous lemmata, we derive quantitative bounds for the function $\Psi_\Phi$. 
The first estimate controls its behavior at infinity.

\begin{lemma}\label{Lem-upper-bound-Psi-Phi-y-minus}
Let $\Phi \in C^1_c(\bb R_+^*)$ and let $0< \gamma < \delta/2$, 
where $\delta$ is the exponent in \ref{Condition-moments-A-B}. 
Then, there exists a constant $c >0$ (depending on $\gamma$ and $\Phi$) such that for any $y \in \Rdpint$,  
\begin{align*}
|\Psi_{\Phi}(y)|  
\leq c |y|^{-\gamma}. 
\end{align*}
\end{lemma}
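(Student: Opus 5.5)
We need $|\Psi_\Phi(y)|\le c|y|^{-\gamma}$ uniformly on $\Rdpint$, including near the boundary where Lemma \ref{Lem-f-Phi-decay-rate-infinity} degenerates through the factor $(\min_i \overline y_i)^{-\gamma}$. The definition $\Psi_\Phi(y)=\bb E f_\Phi(A^\top y)-f_\Phi(y)$ is useless near the boundary because of the term $f_\Phi(y)$. So I will work from the representation in Lemma \ref{Lem-formula-Psi-Phi}, in which $y$ enters only through $A^\top y$. Lemma \ref{lem equiv Kesten} then gives strict positivity of direction.

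The plan is to bound the two terms in Lemma \ref{Lem-formula-Psi-Phi} separately:
\begin{align*}
|\Psi_\Phi(y)| \le \bb E \int |\Phi(\langle A^\top y, z\rangle)|\,\nu(dz) + \bb E\int |\Phi(\langle A^\top y, z\rangle + \langle y, B\rangle)|\,\nu(dz).
\end{align*}
Since $\Phi$ is supported in $[s_0,s_1]\subset(0,\infty)$, it satisfies $|\Phi(t)|\le c\,t^{-\gamma}$ for $t>0$ and vanishes at $t\le 0$. For the second integrand we use that $t\mapsto t^{-\gamma}$ is decreasing and $\langle y,B\rangle\ge 0$. This gives $|\Phi(\langle A^\top y,z\rangle+\langle y,B\rangle)|\le c\,\langle A^\top y,z\rangle^{-\gamma}$ whenever $\langle A^\top y,z\rangle>0$, which holds for $z\neq0$ because $A^\top y\in\Rdpint$ by allowability. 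Also $\nu(\{0\})=0$ by the proof of Lemma \ref{Lem-bound.nu-z.gamma}. Hence both terms are bounded by $c\,\bb E\int \langle A^\top y,z\rangle^{-\gamma}\nu(dz)$. Splitting the terms this way avoids any need for the $C^1$ regularity of $\Phi$; the differentiability would only matter for finer estimates.

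Next we apply \eqref{Important-Inequality} with $A^\top y$ in place of $y$:
\[
\langle A^\top y,z\rangle\ge |A^\top y|\,|z|\,\min_i \overline{A^\top y}_i .
\]
By Lemma \ref{lem equiv Kesten}, $\overline{A^\top y}=A^\top\cdot \overline y\in \bb S^{d-1}_{+,\epsilon}$ with $\epsilon=1/(\varkappa d)$, so $\min_i \overline{A^\top y}_i\ge\epsilon$ almost surely. Moreover $|A^\top y|\ge \iota(A^\top)|y|$, and under \ref{Condition-Furstenberg-Kesten} we have $\iota(A^\top)\ge \min_{i,j}A_{i,j}\ge \|A\|/(d\varkappa)$. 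Combined with $\|A\|\ge\iota(A)$, this gives $\iota(A^\top)^{-1}\le d\varkappa\,\iota(A)^{-1}\le d\varkappa N(A)$. Therefore
\[
|\Psi_\Phi(y)|\le 2c\,\epsilon^{-\gamma}(d\varkappa)^{\gamma}\,\bb E[N(A)^{\gamma}]\,|y|^{-\gamma}\int |z|^{-\gamma}\nu(dz).
\]
The integral is finite by Lemma \ref{Lem-bound.nu-z.gamma}, and $\bb E N(A)^\gamma<\infty$ by \ref{Condition-moments-A-B} since $\gamma<\delta$.

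The main point requiring care is the second step: the $B$-term must not reintroduce the boundary degeneracy in $y$. Monotonicity of $t^{-\gamma}$ with $\langle y,B\rangle\ge0$ handles this. Fubini is justified because all integrands are nonnegative after taking absolute values.
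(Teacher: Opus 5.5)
Your proposal is correct and is essentially the paper's proof. Both bound the two terms of the representation in Lemma \ref{Lem-formula-Psi-Phi} by a $\langle\cdot,\cdot\rangle^{-\gamma}$ expression using $\langle y,B\rangle\ge 0$, and then conclude via \eqref{Important-Inequality}, Lemma \ref{lem equiv Kesten}, \ref{Condition-moments-A-B} and Lemma \ref{Lem-bound.nu-z.gamma}. The only difference is cosmetic: the paper applies the positivity to the direction $\overline{Az}$, so $\iota(A)$ appears directly, whereas you apply it to $\overline{A^\top y}$ and therefore need the extra comparison $\iota(A^\top)^{-1}\le d\varkappa\,\iota(A)^{-1}$ from \ref{Condition-Furstenberg-Kesten}, which you verify correctly.
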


\begin{proof}
Since $\Phi$ has compact support contained in $\bb R_+^*$, 
there is a constant $c >0$ (depending on $\gamma$ and $\Phi$) such that
$|\Phi(u)| \leq c u^{-\gamma}$ for all $u \in \bb R_+^*$. 
Using Lemma \ref{Lem-formula-Psi-Phi}, the nonnegativity of the coordinates of $B$, and
the triangle inequality, we obtain
\begin{align*}
 |\Psi_\Phi(y)| & \leq  c  \int_{\operatorname{Aff}(\mathbb R_+^d)} \int_{\mathbb R_+^d}
 \left[  \langle y,Az\rangle^{-\gamma}  +  \langle y,Az+B\rangle^{-\gamma}  \right]  \nu(dz) \, \mu(dA,dB)  \\
 & \leq  2c  \int_{\operatorname{Aff}(\bb R_+^d)}  \int_{\mathbb R_+^d}
 \langle y, Az \rangle^{-\gamma}  \nu(dz) \, \mu(dA,dB).
\end{align*}
By Lemma \ref{lem equiv Kesten}, we have $\min_{1\le i\le d} ( \overline{Az})_i \geq \epsilon$ for any $z \in \bb R_+^d \setminus \{0\}$, 
and therefore \eqref{Important-Inequality} gives
\begin{align*}%\label{}
\langle y,  Az \rangle \geq  \epsilon |y| \, |Az|  \geq \epsilon |y| \,\iota(A)|z|, 
\end{align*}
where $\iota(A) = \inf_{z \in \bb R^d_+ \setminus \{0\} } \frac{|Az|}{|z|}$. 
It follows that
\begin{align*}%\label{}
|\Psi_\Phi(y)| \leq  c' |y|^{-\gamma}  \bb E \bigl[\iota(A)^{-\gamma}\bigr] \int_{\bb R_+^d}|z|^{-\gamma}\,\nu(dz).
\end{align*} 
Both factors on the right-hand side are finite: the first one due to \ref{Condition-moments-A-B}, 
since $\iota(A)^{-1}\leq N(A)$, and the second one
due to Lemma \ref{Lem-bound.nu-z.gamma}. This proves the lemma.
\end{proof}

We can further provide a refined bound for $\Psi_{\Phi}(y)$. 
Besides being useful in its own right, this estimate implies the direct Riemann integrability required in the renewal argument below. 
For a continuous function $h$ on $\Rdpint$, 
we say that $h$ is directly Riemann integrable if 
\begin{align}\label{def-directly Riemann integrable}
\sum_{k \in \bb Z} \sup_{r \in [e^k, e^{k+1}) } \sup_{x \in (\bb S^{d-1}_+)^\circ}  |h( r x )| < \infty. 
\end{align}
In the same way, for a continuous function $\tilde{h}$ on $(\bb S^{d-1}_+)^\circ \times \bb R$, 
we say that $\tilde{h}$ is directly Riemann integrable if
\begin{align}\label{def-directly Riemann integrable2}
	\sum_{k \in \bb Z} \sup_{r \in [k, {k+1}) } \sup_{x \in (\bb S^{d-1}_+)^\circ}  |\tilde{h}(x,r )| < \infty. 
\end{align}

\begin{lemma}\label{Lem-integrability}
Let $\Phi \in C^1_c(\bb R_+^*)$ and let $0< \gamma < \delta/2$, 
where $\delta$ is the exponent in \ref{Condition-moments-A-B}.  
%For any $\gamma \in (0, d)$ and  $\Phi \in C^1_c(\bb R_+^*)$, 
Then, there exists a constant $c>0$ such that for all $y \in \Rdpint$, 
\begin{align}\label{Bound-Psi-Phi-min}
|\Psi_\Phi(y)|  \leq c \min \left\{ |y|^{\delta/4}, |y|^{-\gamma} \right\}. 
\end{align}
%where $\delta > 0$ is from condition \ref{Condition-moments-A-B}. 
In particular,  the function $y \mapsto \Psi_{\Phi}(y)$ 
is continuous and directly Riemann integrable on $\Rdpint$.
\end{lemma}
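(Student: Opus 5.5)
The bound $|y|^{-\gamma}$ is already Lemma \ref{Lem-upper-bound-Psi-Phi-y-minus}. It is the relevant branch of the minimum for $|y|\ge 1$, so the new work is the small-$|y|$ bound $|\Psi_\Phi(y)|\le c|y|^{\delta/4}$ for $|y|\le 1$. For this I would use the representation of Lemma \ref{Lem-formula-Psi-Phi} together with the $C^1$ regularity of $\Phi$. Let $\mathrm{supp}\,\Phi\subset[\alpha,\beta]$ with $0<\alpha<\beta$. For $u,v\ge 0$ we have
\[
|\Phi(u)-\Phi(v)|\le \|\Phi'\|_\infty |u-v|\,\mathds 1\{\min(u,v)\le\beta\}.
\]
We also have the trivial bound $|\Phi(u)-\Phi(v)|\le 2\|\Phi\|_\infty\mathds 1\{\min(u,v)\le \beta\}$. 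With $u=\langle y,Az\rangle\le v=\langle y,Az+B\rangle$ this gives
\[
|\Phi(u)-\Phi(v)|\le c\,\min\{1,\langle y,B\rangle\}\,\mathds 1\{\langle y,Az\rangle\le \beta\}\,\mathds 1\{\langle y,Az+B\rangle\ge\alpha\},
\]
since the difference vanishes unless one of the two points lies in $[\alpha,\beta]$.

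Next I would interpolate. Write $\min\{1,\langle y,B\rangle\}\le (|y||B|)^{\theta}$ for a small $\theta$, say $\theta=\delta/2$. I would also drop the last indicator, or better, keep only $\mathds 1\{\langle y,Az\rangle\le\beta\}$. By Lemma \ref{lem equiv Kesten} and \eqref{Important-Inequality}, $\langle y,Az\rangle\ge \epsilon|y|\,\iota(A)|z|$, so this indicator forces $|z|\le \beta/(\epsilon|y|\iota(A))$. The $\nu$-mass of that ball is controlled by the annulus bound \eqref{inequality nu annulus} plus Radon-ness near the origin, as in the proof of Lemma \ref{Lem-bound.nu-z.gamma}. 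This yields
\[
\nu\{|z|\le R\}\le c(1+R^{\gamma'})
\]
for any small $\gamma'>0$. Hence
\[
|\Psi_\Phi(y)|\le c\,\mathbb E\Big[(|y||B|)^{\theta}\big(1+(|y|\iota(A))^{-\gamma'}\big)\Big]\le c|y|^{\theta-\gamma'}\,\mathbb E\big[|B|^\theta(1+\iota(A)^{-\gamma'})\big]
\]
for $|y|\le1$. The expectation is finite by Hölder's inequality and \ref{Condition-moments-A-B}, provided $\theta=\delta/2$ and $\gamma'\le\delta/2$. Choosing $\gamma'=\delta/4$ gives exactly $|y|^{\delta/4}$. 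Here $\Phi\in C^1$ is used only through the Lipschitz bound; Hölder continuity of order $\theta$ would suffice.

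Continuity of $\Psi_\Phi$ on $\Rdpint$ would follow by dominated convergence in the representation of Lemma \ref{Lem-formula-Psi-Phi}. For $y$ in a compact subset of $\Rdpint$, the integrand is continuous in $y$. It is dominated by the quantity above, uniformly on that compact set, because the bounds depend only on $|y|$, on $\epsilon$, and on upper and lower bounds for $|y|$. Finally, direct Riemann integrability \eqref{def-directly Riemann integrable} is immediate from \eqref{Bound-Psi-Phi-min}. The supremum over the shell $|y|\in[e^k,e^{k+1})$ is at most $c\,e^{(k+1)\delta/4}$ for $k<0$ and at most $c\,e^{-k\gamma}$ for $k\ge0$, and both geometric series converge.

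The main obstacle I anticipate is the bound on $\nu\{|z|\le R\}$ for large $R$ with only a small polynomial growth. This needs the Potter-type estimate \eqref{inequality nu annulus} summed over dyadic annuli, which yields $R^{\gamma'}$ growth for any prescribed $\gamma'>0$. The exponents must then be balanced so that the moments of $|B|$ and $\iota(A)^{-1}$ stay within the available $\delta$.
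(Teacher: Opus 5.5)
Your argument is correct and is essentially the paper's proof. Both use the representation of Lemma \ref{Lem-formula-Psi-Phi}, a Lipschitz bound on $\Phi$, interpolation of $|y||B|$ to the power $\delta/2$, the lower bound $\langle y,Az\rangle\ge\epsilon|y|\,\iota(A)|z|$ from Lemma \ref{lem equiv Kesten}, slow growth of $\nu$ at infinity, and H\"older's inequality with the moments in \ref{Condition-moments-A-B}. The paper instead splits on $\{|y||B|\le a/2\}$ and controls the indicator by $c\langle y,Az\rangle^{-\eta}$ together with $\int|z|^{-\eta}\,\nu(dz)<\infty$ from Lemma \ref{Lem-bound.nu-z.gamma}. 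Your single factor $\min\{1,\langle y,B\rangle\}$ and ball bound $\nu\{|z|\le R\}\le c(1+R^{\gamma'})$ are an equivalent, slightly tidier variant; the ball bound also follows directly from $\nu\{|z|\le R\}\le R^{\gamma'}\int|z|^{-\gamma'}\,\nu(dz)$. As the paper states explicitly, you need $\delta\le 2$ (reduce $\delta$ if necessary) for $\min\{1,x\}\le x^{\delta/2}$.
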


\begin{proof}
The estimate at infinity follows from Lemma \ref{Lem-upper-bound-Psi-Phi-y-minus}. 
It therefore remains to control $\Psi_\Phi(y)$ as $|y|\to0$.
Assume that $\Phi$ is supported on $[a, b]$ for some constants $0<a<b<\infty$. 
By Lemma \ref{Lem-formula-Psi-Phi}, we have 
\begin{align}\label{Decomp-Psi-I1y-I2y}
|\Psi_{\Phi}(y)|  
& \leq   \int_{\textrm{Aff}(\bb R_+^d)}   \int_{\bb R^d_+} 
\Big| \Phi\big( \langle y, Az + B \rangle \big) -  \Phi\big( \langle y, Az \rangle \big)   \Big|  \nu(dz)  \mu(dA, dB)  \notag\\
& \leq  \int_{\{(A, B) \in \textrm{Aff}(\bb R_+^d):  \, |y||B| \leq \frac{a}{2} \}}   \int_{\bb R^d_+} 
\Big| \Phi\big( \langle y, Az + B \rangle \big) -  \Phi\big( \langle y, Az \rangle \big)   \Big|  \nu(dz)  \mu(dA, dB)  \notag\\
& \quad  +  \int_{\{(A, B) \in \textrm{Aff}(\bb R_+^d): \,  |y||B| > \frac{a}{2} \}}   \int_{\bb R^d_+} 
\Big| \Phi\big( \langle y, Az + B \rangle \big) -  \Phi\big( \langle y, Az \rangle \big)   \Big|  \nu(dz)  \mu(dA, dB) \notag\\
& = I_1(y) + I_2(y). 
\end{align}

\textbf{Step 1}. We first deal with $I_1(y)$. 
Since $\Phi$ is continuously differentiable on $[a, b]$, 
we can set $c_1 = \max_{x \in [a, b]} \abs{\Phi'(x)} < \infty$.
Then, for every $|w| \leq  \frac{a}{2}$, it holds that for any $r \in \bb R_+$, 
\begin{align*}
\abs{\Phi(r+w)-\Phi(r) } \le c_1 |w|  \ds{1}_{[\frac{a}{2}, b + \frac{a}{2}]}(r). 
\end{align*}
It follows that, on the set $\{ |y||B| \leq \frac{a}{2} \}$, 
\begin{align}\label{inequa-Phi-diffe-001}
\Big| \Phi\big( \langle y, Az + B \rangle \big) -  \Phi\big( \langle y, Az \rangle \big)   \Big|
& \leq  c_1 \langle y, B \rangle \ds{1}_{[\frac{a}{2}, b + \frac{a}{2}]}(\langle y, Az \rangle)  \notag\\
& \leq c_2 |y| |B| \langle y, Az \rangle^{-\eta}  \notag\\
& \leq  c_2  |y| |B| |y|^{-\eta} |Az|^{-\eta} \Big(\min_{1 \leq i \leq d} (\overline{Az})_i  \Big)^{-\eta}  \notag\\
& \leq  c_2  |y| |B| |y|^{-\eta}  \iota(A)^{-\eta}  |z|^{-\eta}  \Big(\min_{1 \leq i \leq d} (\overline{Az})_i  \Big)^{-\eta}, 
\end{align}
where in the second inequality we used $\ds{1}_{[a',b']}(u) \le  (b')^{\eta} u^{-\eta}$,
in the third one we used \eqref{Important-Inequality},
and in the last one we used $|Az|^{-\eta} \leq \iota(A)^{-\eta} |z|^{-\eta}$, 
with $\iota(A) = \inf_{v \in \bb R^d_+ \setminus \{0\} } \frac{|Av|}{|v|}$. 

By condition \ref{Condition-moments-A-B}
(replacing $\delta$ by $\min\{\delta,1\}$ if necessary, we may assume $0 < \delta \leq 1$), 
there is a constant $0< \delta < 1$
such that $\bb E \big[ N(A)^{\delta}\big]<\infty$ and $\bb E (|B|^{\delta}) <~\infty$. 
Notice that, on the set $\{ |y||B| \leq \frac{a}{2} \}$, 
it holds that $$|y| |B| = (|y| |B|)^{1 - \frac{\delta}{2}} (|y| |B|)^{\frac{\delta}{2}}
 \leq \left( \frac{a}{2} \right)^{1 - \frac{\delta}{2}} |y|^{\frac{\delta}{2}} |B|^{\frac{\delta}{2}}.$$
Hence, we get 
\begin{align*}%\label{}
|I_1(y)|  \leq  c_2  \left( \frac{a}{2} \right)^{1 - \frac{\delta}{2}} |y|^{ \frac{\delta}{2} - \eta}  \int_{ \textrm{Aff}(\bb R_+^d) }  
 \int_{\bb R^d_+}  |B|^{\frac{\delta}{2}}  \iota(A)^{-\eta}  |z|^{-\eta}  \Big(\min_{1 \leq i \leq d} (\overline{Az})_i \Big)^{-\eta} \nu(dz)  \mu(dA, dB). 
\end{align*}
Using H\"older's inequality, condition \ref{Condition-moments-A-B} and Lemma \ref{lem equiv Kesten}, 
and taking $\eta \in (0, \delta/4)$, we obtain 
\begin{align}\label{sup-inequa-bound-001}
 & \sup_{z \in \bb R^d_+ \setminus \{0\}} \int_{ \textrm{Aff}(\bb R_+^d) }  
  |B|^{\frac{\delta}{2}}  \iota(A)^{-\eta}    \Big(\min_{1 \leq i \leq d} (\overline{Az})_i \Big)^{-\eta}  \mu(dA, dB)  \notag\\
& \leq  \left[  \bb E (|B|^{\delta}) \right]^{1/2}
  \left[ \bb E \left( \iota(A)^{- 4 \eta} \right)  \right]^{1/4} 
 \sup_{z \in \bb R^d_+ \setminus \{0\}}  \left[  \bb E  \left( \min_{1 \leq i \leq d} (\overline{Az})_i \right)^{-4 \eta}  \right]^{1/4}  
  \leq  c < \infty. 
\end{align}
Therefore, by Fubini's theorem and Lemma \ref{Lem-bound.nu-z.gamma}, we get
\begin{align}\label{estimate-I1y}
|I_1(y)| \leq  c'  |y|^{ \frac{\delta}{2} - \eta} \int_{\bb R^d_+}  |z|^{- \eta}  \nu(dz)  
\leq  c''  |y|^{ \frac{\delta}{2} - \eta} 
\leq  c'' |y|^{ \frac{\delta}{4}},  
\end{align}
where the last inequality holds since $\eta \in (0, \delta/4)$. 

\textbf{Step 2}. Next we deal with the second term $I_2(y)$. 
We have 
\begin{align*}%\label{}
I_2(y) & \leq 
 \int_{ \left\{(A, B) \in \textrm{Aff}(\bb R_+^d): \, |y||B| > \frac{a}{2} \right\}}   \int_{\bb R^d_+} 
\Big|  \Phi\big( \langle y, Az \rangle \big)   \Big|  \nu(dz)  \mu(dA, dB)  \notag\\
& \quad +  \int_{ \left\{(A, B) \in \textrm{Aff}(\bb R_+^d):  \,  |y||B| > \frac{a}{2} \right\}}   \int_{\bb R^d_+} 
\Big| \Phi\big( \langle y, Az + B \rangle \big)   \Big|  \nu(dz)  \mu(dA, dB)  \notag\\
& = I_{21}(y) + I_{22}(y). 
\end{align*}
\textbf{Step 2a}. For $I_{21}(y)$, since $\Phi$ is supported on $[a, b]$ for some constants $0<a<b<\infty$,
similarly to the proof of \eqref{inequa-Phi-diffe-001}, by \eqref{Important-Inequality} we have 
\begin{align}\label{estimate-I21-a}
\Big|  \Phi\big( \langle y, Az \rangle \big)   \Big|
& \leq  \| \Phi \|_{\infty}  \ds{1}_{[a, b]}(\langle y, Az \rangle)   \notag\\
& \leq c \langle y, Az \rangle^{-\eta}  \notag\\
& \leq c |y|^{-\eta} |Az|^{-\eta} \Big(\min_{1 \leq i \leq d} (\overline{Az})_i \Big)^{-\eta}  \notag\\
& \leq c |y|^{-\eta}  \iota(A)^{-\eta}  |z|^{-\eta} \Big(\min_{1 \leq i \leq d} (\overline{Az})_i \Big)^{-\eta}. 
\end{align}
Hence, using the fact that $|y||B| > \frac{a}{2}$, Fubini's theorem, \eqref{sup-inequa-bound-001} and Lemma \ref{Lem-bound.nu-z.gamma}, 
we see that 
\begin{align}\label{estimate-I21-b}
I_{21}(y) & \leq  c' |y|^{-\eta}  \int_{ \left\{(A, B) \in \textrm{Aff}(\bb R_+^d):  \,  |y||B| > \frac{a}{2}  \right\}}   \int_{\bb R^d_+} 
\iota(A)^{-\eta}  |z|^{-\eta}  \Big(\min_{1 \leq i \leq d} (\overline{Az})_i \Big)^{-\eta}   \nu(dz)  \mu(dA, dB)  \notag\\
& \leq  c' \left( \frac{2}{a} \right)^{ \frac{\delta}{2} } |y|^{\frac{\delta}{2}-\eta}   \int_{\bb R^d_+}  \int_{ \textrm{Aff}(\bb R_+^d) }  
 |B|^{\frac{\delta}{2}}  \iota(A)^{-\eta}  |z|^{-\eta}  \Big(\min_{1 \leq i \leq d} (\overline{Az})_i \Big)^{-\eta}     \mu(dA, dB) \nu(dz)  \notag\\
& \leq c'' |y|^{\frac{\delta}{2}- \eta}  \int_{\bb R^d_+}  |z|^{- \eta} \nu(dz)  \notag\\
& \leq c''' |y|^{\frac{\delta}{4}}, 
\end{align}
where in the last inequality we used $\eta \in (0, \delta/4)$. 

\textbf{Step 2b}. For $I_{22}(y)$, taking into account that both $y$ and $B$ are in $\bb R_+^d$, we have 
\begin{align*}%\label{}
\Big| \Phi\big( \langle y, Az + B \rangle \big)   \Big|
\leq  \| \Phi \|_{\infty}  \ds{1}_{[a, b]}(\langle y, Az + B \rangle)  
\leq  \| \Phi \|_{\infty}  \ds{1}_{[0, b]}(\langle y, Az \rangle). 
\end{align*}
In the same way as in the proof of \eqref{estimate-I21-a}, it follows that
\begin{align}\label{estimate-I21-aa}
\ds{1}_{[0, b]}(\langle y, Az \rangle)
\leq  c \langle y, Az \rangle^{-\eta} 
\leq c |y|^{-\eta}  \iota(A)^{-\eta}  |z|^{-\eta}  \Big(\min_{1 \leq i \leq d} (\overline{Az})_i \Big)^{-\eta}. 
\end{align}
Hence, by \eqref{estimate-I21-b}, we obtain 
\begin{align}\label{estimate-I21-c}
I_{22}(y)  \leq c |y|^{\frac{\delta}{2} - \eta}  \int_{\bb R^d_+}  |z|^{-\eta} \nu(dz) 
\leq c' |y|^{\frac{\delta}{2}-\eta} 
\leq c' |y|^{\frac{\delta}{4}}. 
\end{align}
Substituting \eqref{estimate-I1y}, \eqref{estimate-I21-b} and \eqref{estimate-I21-c} into \eqref{Decomp-Psi-I1y-I2y},
we get that there exists a constant $c>0$ such that for any $y \in \Rdpint$, 
\begin{align*}%\label{}
|\Psi_{\Phi}(y)| \leq c |y|^{\frac{\delta}{4}}. 
\end{align*}
Together with Lemma \ref{Lem-upper-bound-Psi-Phi-y-minus}, this completes the proof of \eqref{Bound-Psi-Phi-min}. 

By the definition \eqref{def-directly Riemann integrable} and using \eqref{Bound-Psi-Phi-min}, 
we get that the function $y \mapsto \Psi_{\Phi}(y)$
is directly Riemann integrable on $\Rdpint$. 

The continuity of $\Psi_{\Phi}$ follows from the representation in Lemma \ref{Lem-formula-Psi-Phi}
and the fact that we may use dominated convergence to study $\lim_{n \to \infty} \Psi_{\Phi}(y_n)$ due to the bounds 
\eqref{inequa-Phi-diffe-001}, \eqref{estimate-I21-a} and \eqref{estimate-I21-aa}. 
\end{proof}

The next lemma shows the cancellation property of the function $\Psi_\Phi$. 
This property will allow the renewal problem to be reduced to the study of the function $\overline\Psi_\Phi$ introduced below. 

\begin{lemma}\label{Lem-integrability002}
For any $\Phi \in C^1_c(\bb R_+^*)$ and $x \in (\bb S_+^{d-1})^\circ$, we have 
\begin{align}\label{symmetry identity}
\int_{-\infty}^{\infty}  \Psi_\Phi(e^{-s} x) ds = 0. 
\end{align}
Moreover, the function 
\begin{align*}%\label{}
\overline \Psi_\Phi(x, t) = \int_{-\infty}^{t} \Psi_\Phi(e^{-s} x) ds
\end{align*}
is continuous and directly Riemann integrable on $(\bb S_+^{d-1})^\circ \times \bb R$. 
\end{lemma}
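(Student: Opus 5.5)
The key observation is that $\Psi_\Phi(e^{-s}x)$ is an $s$-derivative of something integrable once we unfold the definition of $f_\Phi$. Fix $x \in (\bb S_+^{d-1})^\circ$. By \eqref{Poisson-Psi-Phi} and Lemma \ref{Lem-formula-Psi-Phi},
\[
\Psi_\Phi(e^{-s}x) = \bb E\big[ F(A^\top x, s) \big] - F(x,s), \qquad F(y,s) := f_\Phi(e^{-s}y) = \int_{\bb R^d_+} \Phi\big(e^{-s}\langle y, z\rangle\big)\,\nu(dz).
\]
For any $y \in \Rdpint$, Fubini gives
\[
\int_{-\infty}^\infty F(y,s)\,ds = \int_{\bb R^d_+} \int_{-\infty}^{\infty} \Phi\big(e^{-s}\langle y,z\rangle\big)\, ds\,\nu(dz) = \int_{\bb R^d_+} \Big(\int_0^\infty \Phi(u)\frac{du}{u}\Big)\nu(dz),
\]
which is infinite because $\nu$ has infinite mass. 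So I will not split the integral naively. Instead I work on a truncated window $[-T,T]$ and use the Haar invariance of $ds$. Since $\langle A^\top x, z\rangle = |A^\top x|\,\langle A^\top\cdot x, z\rangle$, the substitution $s \mapsto s + \log|A^\top x|$ gives
\[
\int_{-T}^{T} F(A^\top x,s)\,ds = \int_{-T-\log|A^\top x|}^{T-\log|A^\top x|} F(A^\top\cdot x, s)\,ds .
\]
I do not want to compare different directions $A^\top\cdot x$ and $x$ through $F$. It is cleaner to go back to the representation of Lemma \ref{Lem-formula-Psi-Phi} and integrate in $s$ pointwise for fixed $(A,B,z)$:
\[
\int_{-\infty}^{\infty}\Big(\Phi\big(e^{-s}\langle x, Az\rangle\big) - \Phi\big(e^{-s}\langle x, Az+B\rangle\big)\Big)ds = \int_0^\infty \Phi(u)\frac{du}{u} - \int_0^\infty\Phi(u)\frac{du}{u} = 0,
\]
valid whenever both $\langle x,Az\rangle>0$ and $\langle x, Az+B\rangle > 0$, which holds for $z\neq 0$ since $x$ is interior, and $\nu(\{0\})=0$ was shown in the proof of Lemma \ref{Lem-bound.nu-z.gamma}. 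Hence \eqref{symmetry identity} follows from Fubini, provided
\[
\int_{-\infty}^\infty \iint \Big|\Phi\big(e^{-s}\langle x,Az\rangle\big)-\Phi\big(e^{-s}\langle x,Az+B\rangle\big)\Big|\,\nu(dz)\,\mu(dA,dB)\,ds<\infty.
\]
This absolute integrability is the main obstacle. My plan is to observe that the proof of Lemma \ref{Lem-integrability} in fact bounds exactly this absolute integrand, namely $I_1+I_2$ evaluated at $y = e^{-s}x$, by $c\min\{e^{-s\delta/4}, e^{s\gamma}\}$. This holds because Lemma \ref{Lem-upper-bound-Psi-Phi-y-minus} also bounds the sum of the absolute values, not just the difference. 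Since $|e^{-s}x| = e^{-s}$, the bound is integrable over $s\in\bb R$, so Tonelli justifies the interchange.

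For the second claim, continuity of $\overline\Psi_\Phi$ in $(x,t)$ follows from continuity of $\Psi_\Phi$ (Lemma \ref{Lem-integrability}) together with dominated convergence using the uniform bound $c\min\{e^{-s\delta/4},e^{s\gamma}\}$, which is independent of $x$. For direct Riemann integrability I bound $\overline\Psi_\Phi$ on the two half-lines separately. For $t\le 0$,
\[
|\overline\Psi_\Phi(x,t)| \le c\int_{-\infty}^t e^{s\gamma}\,ds = c' e^{t\gamma}.
\]
For $t>0$, I use \eqref{symmetry identity} to write $\overline\Psi_\Phi(x,t) = -\int_t^\infty \Psi_\Phi(e^{-s}x)\,ds$, which gives $|\overline\Psi_\Phi(x,t)| \le c'' e^{-t\delta/4}$. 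Both bounds are uniform in $x\in(\bb S_+^{d-1})^\circ$, so the sum in \eqref{def-directly Riemann integrable2} is dominated by a convergent geometric series. The only delicate point is to confirm that the bounds of Lemmas \ref{Lem-upper-bound-Psi-Phi-y-minus} and \ref{Lem-integrability} do not depend on the direction $\overline y$. This is true because they used only $|y|$, with the direction controlled via Lemma \ref{lem equiv Kesten} applied to $Az$.
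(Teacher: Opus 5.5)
Your proposal is correct and follows essentially the same route as the paper. You use absolute integrability of the integrand in the representation of Lemma \ref{Lem-formula-Psi-Phi}, then Fubini and translation invariance of $ds$ to get \eqref{symmetry identity}, dominated convergence for continuity, and the two half-line exponential bounds (invoking \eqref{symmetry identity} for $t>0$) for direct Riemann integrability. Your observation that the proofs of Lemmas \ref{Lem-upper-bound-Psi-Phi-y-minus} and \ref{Lem-integrability} bound the integral of the absolute difference, not merely $|\Psi_\Phi|$, uniformly in the direction, is precisely what justifies the paper's brief appeal to ``direct Riemann integrability of $\Psi$ and its representation.''
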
 

\begin{proof}
By the direct Riemann integrability of $\Psi$, proved in Lemma \ref{Lem-integrability}, and its representation, proved in Lemma \ref{Lem-formula-Psi-Phi}, 
it holds that, for any $x \in (\bb S_+^{d-1})^\circ$, 
\begin{align}\label{eq:bound integral Phi}
\int_{-\infty}^{\infty} \int_{\textrm{Aff}(\bb R_+^d)}   \int_{\bb R^d_+} 
\Big|  \Phi\big( \langle e^{-s} x, Az \rangle \big) - \Phi\big( \langle e^{-s} x, Az + B \rangle \big)   \Big|  \nu(dz)  \mu(dA, dB) ds < \infty. 
\end{align}
Therefore, we may apply Fubini's theorem to obtain that, for any $x \in (\bb S_+^{d-1})^\circ$, 
\begin{align*}%\label{}
& \int_{-\infty}^{\infty}  \Psi_\Phi(e^{-s} x) ds \notag\\
& =  \int_{-\infty}^{\infty} \int_{\textrm{Aff}(\bb R_+^d)}   \int_{\bb R^d_+} 
\Big( \Phi\big( \langle e^{-s} x, Az \rangle \big) - \Phi\big( \langle e^{-s} x, Az + B \rangle \big)   \Big)  \nu(dz)  \mu(dA, dB) ds \notag\\
& = \int_{\textrm{Aff}(\bb R_+^d)}   \int_{\bb R^d_+}   \int_{-\infty}^{\infty}
\Big( \Phi\big(  e^{-s + \log \langle  x, Az \rangle } \big) -  \Phi\big( e^{-s + \log \langle  x, Az + B \rangle } \big)    \Big)  ds \nu(dz)  \mu(dA, dB) 
\notag\\
& = 0. 
\end{align*}
Here we have used the translation invariance of the Lebesgue measure to arrive at the proof of  \eqref{symmetry identity}. 
The bound \eqref{eq:bound integral Phi} also allows us to use the dominated convergence theorem to infer the continuity of $\overline{\Psi}$ from the continuity of $\Psi$.

Using \eqref{Bound-Psi-Phi-min}, 
we get that, for any $t \leq 0$ and any $\gamma \in (0, \delta/2)$
with $\delta > 0$ from condition \ref{Condition-moments-A-B}, we have 
\begin{align*}%\label{}
|\overline \Psi_\Phi(x, t)| \leq  \int_{-\infty}^{t} |\Psi_\Phi(e^{-s} x)| ds 
\leq  c  \int_{-\infty}^{t}  e^{s \gamma} ds  = \frac{c}{\gamma}e^{- \gamma |t|}. 
\end{align*}
For $t>0$, from \eqref{symmetry identity} we get that 
\begin{align*}%\label{}
|\overline \Psi_\Phi(x, t)| = \left| \int_{-\infty}^{t} \Psi_\Phi(e^{-s} x) ds \right| 
=  \left| \int_{t}^{\infty} \Psi_\Phi(e^{-s} x) ds \right|
\leq   \int_{t}^{\infty}  \left|  \Psi_\Phi(e^{-s} x) \right| ds. 
\end{align*}
Using again \eqref{Bound-Psi-Phi-min}, we get 
\begin{align*}%\label{}
\int_{t}^{\infty}  \left|  \Psi_\Phi(e^{-s} x) \right| ds 
\leq c  \int_{t}^{\infty} e^{-s \delta/4} ds  = \frac{4c}{\delta}  e^{- t \delta /4}. 
\end{align*}
%where $\delta > 0$ is from condition \ref{Condition-moments-A-B}. 
Therefore, the function $\overline \Psi_\Phi$ is directly Riemann integrable on $(\bb S_+^{d-1})^\circ \times \bb R$. 
\end{proof}

Taking $x \in (\bb S_+^{d-1})^\circ$ and $y=e^{-s}x$ with $s \in \bb R$ in the Poisson equation \eqref{Poisson-Psi-Phi}, 
we obtain 
\begin{align*}
\Psi_\Phi(e^{-s}x) 
= \bb E  f_\Phi \left( e^{\log |A_1^\top x| - s}  (A_1^\top \cdot x)  \right) - f_\Phi(e^{-s}x) 
= \bb E  f_\Phi \left(e^{S_1^x-s}X_1^x \right) - f_\Phi(e^{-s}x), 
\end{align*}
where $X_1^x$ and $S_1^x$ are defined in \eqref{def-Xn-Sn}. 
This leads to the following martingale associated with the Poisson equation. 
Define
\begin{align*}%\label{}
M_0(e^{-s}x) = f_\Phi (e^{-s} x)
\end{align*}
and for $n \geq 1$,
\begin{align}\label{def martingale Mn}
M_n(e^{-s}x) = f_\Phi \Big(e^{S_n^x-s} X_n^x \Big) - \sum_{k=0}^{n-1} \Psi_{\Phi} \Big( e^{S_k^x-s}X_k^x \Big). 
\end{align}

\begin{lemma}\label{Lem-martingale}
Let $\Phi \in C^1_c(\bb R_+^*)$, $s \in \bb R$ and $x \in (\bb S_+^{d-1})^\circ$. 
Then $M_n(e^{-s}x)$ is a martingale with respect to the natural filtration $(\mathscr F_n)_{n \geq 0}$ 
of $(A_n)_{n\geq 1}$, with $\mathscr F_0$ being the trivial $\sigma$-algebra.
\end{lemma}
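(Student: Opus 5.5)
The plan is to verify the two defining properties of a martingale: integrability of each $M_n(e^{-s}x)$, and the identity $\mathbb E[M_{n+1}(e^{-s}x)\mid \mathscr F_n] = M_n(e^{-s}x)$. Adaptedness is immediate, since $X_k^x$ and $S_k^x$ are measurable functions of $A_1,\dots,A_k$.

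\textbf{Integrability.} We first check that $f_\Phi(e^{S_n^x-s}X_n^x)$ and each $\Psi_\Phi(e^{S_k^x-s}X_k^x)$ are integrable. For the $\Psi_\Phi$ terms, Lemma~\ref{Lem-integrability} gives $|\Psi_\Phi(y)|\le c\min\{|y|^{\delta/4},|y|^{-\gamma}\}\le c$, so these terms are uniformly bounded. For the $f_\Phi$ term, we observe that $X_n^x$ stays in the interior for $n\ge 0$. For $n\ge 1$ this follows from Lemma~\ref{lem equiv Kesten}, which gives $\min_i (X_n^x)_i\ge\epsilon$; for $n=0$ it holds because $x\in(\bb S_+^{d-1})^\circ$. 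Lemma~\ref{Lem-f-Phi-decay-rate-infinity} then yields $|f_\Phi(e^{S_n^x-s}X_n^x)|\le c\,\epsilon'^{-\gamma}e^{-\gamma(S_n^x-s)}$, where $\epsilon'=\min(\epsilon,\min_i x_i)$. We bound $e^{-\gamma S_n^x}=|A_n^\top\cdots A_1^\top x|^{-\gamma}\le \prod_{k=1}^n\iota(A_k^\top)^{-\gamma}$. Condition \ref{Condition-Furstenberg-Kesten} gives $\iota(g^\top)\ge \varkappa^{-1}\|g\|\ge\varkappa^{-1}\iota(g)$ up to dimension constants; alternatively, one can use directly that the minimal row sum is comparable to the minimal column sum. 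Hence, by independence and \ref{Condition-moments-A-B} with $\gamma\le\delta$, we get $\mathbb E e^{-\gamma S_n^x}\le (C\,\mathbb E N(A)^{\gamma})^n<\infty$.

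\textbf{Martingale identity.} By construction,
\[
M_{n+1}-M_n = f_\Phi\bigl(e^{S_{n+1}^x-s}X_{n+1}^x\bigr) - f_\Phi\bigl(e^{S_n^x-s}X_n^x\bigr) - \Psi_\Phi\bigl(e^{S_n^x-s}X_n^x\bigr).
\]
Setting $y_n=e^{S_n^x-s}X_n^x = e^{-s}A_n^\top\cdots A_1^\top x\in\Rdpint$, which is $\mathscr F_n$-measurable, we have $e^{S_{n+1}^x-s}X_{n+1}^x=A_{n+1}^\top y_n$. Since $A_{n+1}$ is independent of $\mathscr F_n$, we compute $\mathbb E[f_\Phi(A_{n+1}^\top y_n)\mid\mathscr F_n]=F(y_n)$ with $F(y)=\mathbb E f_\Phi(A^\top y)$; this is the standard freezing lemma, valid here because $f_\Phi\ge 0$ for nonnegative $\Phi$. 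For general $\Phi$ we split $\Phi=\Phi^+-\Phi^-$, or use the integrability just proven. By the definition \eqref{Poisson-Psi-Phi}, $F(y_n)-f_\Phi(y_n)=\Psi_\Phi(y_n)$, so the conditional expectation of the increment vanishes.

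\textbf{Main obstacle.} The only delicate point is the integrability of $f_\Phi(e^{S_n^x-s}X_n^x)$ when $S_n^x$ is very negative, since $f_\Phi(y)$ is only bounded by $|y|^{-\gamma}$ and not uniformly. This is handled by the negative moment of $\iota$ from \ref{Condition-moments-A-B}, combined with the interior localization of Lemma~\ref{lem equiv Kesten}.
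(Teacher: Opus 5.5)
Your proposal is correct and follows essentially the paper's route: integrability of $f_\Phi(e^{S_n^x-s}X_n^x)$ via Lemma~\ref{Lem-f-Phi-decay-rate-infinity} together with negative moments of $\iota(A^\top)$, compared to those of $\iota(A)$ through \ref{Condition-Furstenberg-Kesten}, and then the one-step conditional expectation computed from the Poisson equation \eqref{Poisson-Psi-Phi}. Your version is slightly leaner than the paper's: you use that $\Psi_\Phi$ is bounded by Lemma~\ref{Lem-integrability}, and the deterministic bound $\min_i (X_n^x)_i\ge\epsilon$ in place of the paper's Cauchy--Schwarz step; you also explicitly cover the case $n=0$.
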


\begin{proof}
We first verify integrability. 
By Lemma \ref{Lem-f-Phi-decay-rate-infinity}, we have that, for any $\gamma \in (0, \delta/2)$,
with $\delta$ from condition \ref{Condition-moments-A-B}, 
and for any function $\Phi$ in $C^1_c(\bb R_+^*)$, 
there is a constant $c >0$ (depending on $\gamma$ and $\Phi$) such that for any $n \geq 1$, $s \in \bb R$ and $x \in (\bb S_+^{d-1})^\circ$, 
\begin{align*}%\label{}
\bb E \left| f_\Phi  \Big(e^{S_n^x-s} X_n^x \Big) \right|
 \leq c e^{\gamma s} \bb E \Big( \Big(\min_{1 \leq i \leq d} (X_n^x)_i  \Big)^{-\gamma}  e^{- \gamma S_n^x} \Big). 
\end{align*}
Using the Cauchy-Schwarz inequality and Lemma \ref{lem equiv Kesten}, we get 
\begin{align*}%\label{}
\bb E  \Big(  \Big(\min_{1 \leq i \leq d} (X_n^x)_i  \Big)^{-\gamma}  e^{- \gamma S_n^x}  \Big)
\leq  \bb E^{1/2} \Big(  \Big(\min_{1 \leq i \leq d} (X_n^x)_i  \Big)^{-2\gamma}  \Big)  \bb E^{1/2} \Big( e^{- 2\gamma S_n^x} \Big)
\leq  c \Big[ \bb E \iota(A_1^\top)^{-2\gamma} \Big]^{n/2}. 
% < \infty,  
\end{align*}
By \eqref{def-norm-g-iota} and condition \ref{Condition-Furstenberg-Kesten}, 
we know that $\bb E \iota(A_1^\top)^{-2\gamma} \leq c \bb E \iota(A_1)^{-2\gamma}$. 
Hence 
\begin{align*}%\label{}
\bb E \left| f_\Phi \Big(e^{S_n^x-s} X_n^x \Big) \right|
 \leq c e^{\gamma s} \left[ \bb E \iota(A_1)^{-2\gamma} \right]^{n/2} < \infty, 
\end{align*}
where we take $\gamma \in (0, \delta/2)$ with $\delta>0$ from condition \ref{Condition-moments-A-B}. 
Similarly, using Lemma \ref{Lem-integrability}, we get that for any $n \geq 1$, 
\begin{align*}%\label{}
\bb E \left| \Psi_{\Phi} \Big( e^{S_n^x-s}X_n^x \Big) \right| 
\leq  c e^{\gamma s}  \bb E  e^{-\gamma S_n^x}  \leq c e^{\gamma s} \left[ \bb E \iota(A_1^\top)^{-\gamma} \right]^n
< \infty. 
\end{align*}
Therefore, for any $n \geq 1$, $s \in \bb R$ and $x \in (\bb S_+^{d-1})^\circ$, we have $\bb E |M_n(e^{-s}x)| < \infty$, 
so we may take expectations and conditional expectations to verify the martingale property below. 

Note that 
\begin{align}\label{maringale-property-001}
\bb E \Big[ M_n(e^{-s}x)  \, \Big|\, \mathscr F_{n-1} \Big]
 = \bb E \Big[ f_\Phi \Big( e^{S_n^x-s} X_n^x \Big) \, \Big| \, \mathscr F_{n-1}\Big] 
  - \sum_{k=0}^{n-1} \Psi_{\Phi} \Big( e^{S_k^x-s}X_k^x \Big). 
\end{align}
By \eqref{def-Xn-Sn} and \eqref{Poisson-Psi-Phi}, we get 
\begin{align*}
&  \bb E \Big[ f_\Phi  \Big(e^{S_n^x-s} X_n^x \Big) \, \Big| \, \mathscr F_{n-1}\Big]  \\
& = \bb E \Big[ f_\Phi  \Big(e^{S_n^x-s} X_n^x \Big) - f_\Phi \Big(e^{S_{n-1}^x-s} X_{n-1}^x \Big) \, \Big| \, \mathscr F_{n-1}\Big] 
 + f_\Phi  \Big(e^{S_{n-1}^x-s}X_{n-1}^x \Big)  \\
& = \bb E \Big[ f_\Phi  \Big(A_n^\top e^{S_{n-1}^x-s}X_{n-1}^x \Big) 
 - f_\Phi  \Big( e^{S_{n-1}^x-s}X_{n-1}^x \Big) \, \Big| \, \mathscr F_{n-1}\Big] + f_\Phi  \Big( e^{S_{n-1}^x-s}X_{n-1}^x \Big)  \\
& = \Psi_\Phi  \Big( e^{S_{n-1}^x-s}X_{n-1}^x \Big) + f_\Phi \Big( e^{S_{n-1}^x-s}X_{n-1}^x \Big). 
\end{align*}
Substituting this into \eqref{maringale-property-001} gives 
\begin{align*}%\label{}
\bb E \Big[ M_n(e^{-s}x)  \, \Big|\, \mathscr F_{n-1} \Big]
& = \Psi_\Phi \Big( e^{S_{n-1}^x-s}X_{n-1}^x \Big) + f_\Phi \Big( e^{S_{n-1}^x-s}X_{n-1}^x \Big) 
 -  \sum_{k=0}^{n-1} \Psi_{\Phi} \Big( e^{S_k^x-s}X_k^x \Big)  \notag\\
& = f_\Phi  \Big( e^{S_{n-1}^x-s}X_{n-1}^x \Big)  - \sum_{k=0}^{n-2} \Psi_{\Phi} \Big( e^{S_k^x-s}X_k^x \Big) \notag\\
& = M_{n-1}(e^{-s}x), 
\end{align*}
which proves the assertion. 
\end{proof}

\subsection{Renewal estimates for the centered Markov random walk}\label{Subsection renewal}

In this subsection we collect several estimates for the centered Markov random walk $(X_n^x, S_n^x)_{n \geq 0}$ 
that will be used in the proof of the main results. 
We first recall a conditioned local limit estimate for products of positive random matrices 
and derive a quantitative bound on the corresponding descending ladder epochs. 
We then recall the killed renewal estimate from \cite{GMX25}  
and deduce exponential integrability of the overshoots. 

%We finish this section by quoting from \cite{GMX25} the renewal theoretic result for centered Markov random walks 
%that will be crucial for our proof.
For any $x \in \bb S^{d-1}_+$ and $b \in \bb R$, define 
\begin{align*}%\label{}
\tau_{x, b} = \inf\{ k \ge 1 \, : \,  b + S_k^x < 0\}
\end{align*}
with the convention $\inf \emptyset = \infty$. 
When $b = 0$, we simply write $\tau_x = \tau_{x, 0}$. 
In the proof of the main results (see Section \ref{Section-proof} below), it will also be useful to introduce the stopping time
\begin{align}\label{def stopping T}
T = \inf\{ k \ge 1 \, : \, \log\| A_k^\top \cdots A_1^\top \| < 0\}. 
\end{align}
By \ref{Condition-Furstenberg-Kesten} and Lemma 4.1 in \cite{GX23}, 
with $\varkappa > 1$ as in \ref{Condition-Furstenberg-Kesten}, 
we have 
\begin{align*}
|A^\top x|  \leq  \| A^\top \|  \leq \varkappa^2  |A^\top x| 
\end{align*}
for any $x \in \bb S^{d-1}_+$ and $A \in \Gamma$, 
so that $\log |A^\top x| \leq \log \|A^\top \| \leq 2 \log \varkappa + \log |A^\top x|$. 
Therefore, it follows that, for any $x \in \bb S^{d-1}_+$, $\bb P$-almost surely, 
\begin{align}\label{important inequality}
\tau_{x} \leq T \leq \tau_{x, 2 \log \varkappa}. 
\end{align}
We will use the following conditioned local limit theorem. % for products of positive random matrices. 

\begin{lemma}[\cite{GX23}] \label{Lem-CLLT-bound}
Assume \ref{proximality}-\ref{Condition-Furstenberg-Kesten}. 
Then there exists a constant $c>0$ such that for any $x \in \bb S_+^{d-1}$, $b \geq 0$, $z \geq 0$, $t \in \bb R$ and $n \geq 1$, 
\begin{align*}
 \bb P \left( b + S_n^x \in  t +  [0, z], \tau_{x, b} > n \right)
\leq  \frac{c}{n^{3/2}} (1 + b)  (1 + z) (1 + z + \max\{t, 0\})
\end{align*}
\end{lemma}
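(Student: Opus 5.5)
This bound is the conditioned local limit estimate of \cite{GX23}, and in the paper it is quoted rather than reproved. If I had to reconstruct it, I would follow the classical three-factor argument for random walks conditioned to stay positive, adapted to the Markov random walk $(X_n^x,S_n^x)$. The target is the product of three factors: $(1+b)/\sqrt n$ for staying positive during the first half, $(1+z)/\sqrt n$ for the local window, and $(1+z+\max\{t,0\})/\sqrt n$ for staying positive during the second half, viewed backwards from the endpoint.

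\emph{Step 1: splitting.} Set $m=\lfloor n/2\rfloor$ and apply the Markov property at time $m$. On $\{\tau_{x,b}>m\}$, the remaining event concerns the walk restarted from $(X_m^x, b+S_m^x)$: it must remain nonnegative for $n-m$ more steps and end in $t+[0,z]$. I would bound this conditional probability uniformly in the starting direction and in the starting level $b'=b+S_m^x\ge 0$ by
$$\frac{c\,(1+z)(1+z+\max\{t,0\})}{n}.$$
Combined with the first-half estimate $\bb P(\tau_{x,b}>m)\le c(1+b)/\sqrt n$, this gives $n^{-3/2}$. The first-half estimate is the standard bound for centered Markov walks; under \ref{Condition-Lyapunov-zero} it follows from the martingale approximation of $S_n$ and the harmonic function $V(x,b)\le c(1+b)$.

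\emph{Step 2: reversal of the second half.} I would drop the positivity constraint on the early part of the second segment and keep it only on its last $n-2m'$ steps, with $m'=\lfloor (n-m)/2\rfloor$. The middle segment then yields the unconditioned local limit bound $\sup_{x,u}\bb P(u+S_k^x\in[0,z+1])\le c(1+z)/\sqrt k$, which is available under \ref{Condition-dense}. The last segment is read backwards from the endpoint. Here I would use \ref{Condition-Furstenberg-Kesten} to reduce the Markov walk to products of matrices. Indeed, $S_n^x$ and $\log\|A_n^\top\cdots A_1^\top\|$ differ by at most $2\log\varkappa$ uniformly in $x$ (cf.\ \eqref{important inequality}), and $\|A_n^\top\cdots A_{k+1}^\top\|$ is a function of the increments in reversed order. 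Because the $A_i$ are i.i.d., the reversed segment has the law of the walk driven by the non-transposed products $A_{k+1}\cdots A_n$, which satisfies the same hypotheses. On this reversed walk the event reads: start at height in $t+[0,z]$ and stay above $-O(1)$ for $m'$ steps. Its probability is at most $c(1+z+\max\{t,0\}+2\log\varkappa)/\sqrt n$, which is the same first-half estimate again.

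\emph{Main obstacle.} The delicate point is the decoupling of the middle local estimate from the two boundary events, since they share increments. I would handle it by conditioning on $\mathscr F_{m}$ and on the $\sigma$-field of the last $m'$ increments, then applying the local bound to the middle block with both its starting level and its target window frozen. Uniformity in the starting direction, needed because the middle block starts from $X_m^x$, is supplied by Lemma~\ref{lem equiv Kesten}. The additive error $2\log\varkappa$ introduced by the norm comparison only enlarges the window by a constant, which is absorbed into $1+z$. This is why the final bound is polynomial in $(1+b)$, $(1+z)$ and $(1+z+\max\{t,0\})$ with a constant $c$ independent of $x$, $t$ and $n$.
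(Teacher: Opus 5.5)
The paper does not prove this lemma; it quotes it from \cite{GX23}. Your sketch therefore has to stand on its own, and it does. The three-block scheme is the classical route: a ladder-tail bound $c(1+b)/\sqrt n$ on $[0,m]$, a uniform unconditioned local bound $c(1+z)/\sqrt n$ on the middle block, and a time-reversed ladder bound $c(1+z+\max\{t,0\})/\sqrt n$ on the last block. Condition \ref{Condition-Furstenberg-Kesten} is what makes the reversal and the decoupling work for this Markov walk without building a dual chain. That is what your route buys over the citation: it shows the lemma needs only three standard inputs plus the norm comparison $|g y|\le\|g\|\le\varkappa^2|g y|$ for products $g$ of transposed matrices and all $y\in\bb S^{d-1}_+$. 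When you write it out, make the following points explicit.

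\emph{Conditioning on the last block.} Describe the last block through the matrices $A_{n-m'+1},\dots,A_n$, not through the increments $S^x_k-S^x_{k-1}$. Those increments depend on $X^x_{n-m'}$, hence on the middle matrices. The norm comparison gives $S_n^x-S_{n-m'}^x=\log\|A_n^\top\cdots A_{n-m'+1}^\top\|+O(\log\varkappa)$. So $\{b+S_n^x\in t+[0,z]\}$ forces $\log|A_{n-m'}^\top\cdots A_{m+1}^\top X_m^x|$ into a window of length $z+2\log\varkappa$. The position of that window is $\mathscr F_m\vee\sigma(A_{n-m'+1},\dots,A_n)$-measurable, hence independent of the middle matrices.

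\emph{Starting height of the reversed walk.} Replace the random height $b+S_n^x$ by its deterministic upper bound $t+z$. Indeed, $\{b+S_j^x\ge 0,\ n-m'\le j<n\}\cap\{b+S_n^x\le t+z\}$ is contained in $\{\log\|A_n^\top\cdots A_{j+1}^\top\|\le t+z+2\log\varkappa\ \text{for } n-m'\le j<n\}$. This event depends on the last block only, so it factorises against $\{\tau_{x,b}>m\}$.

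\emph{Nature of the reversed estimate.} After relabelling, the last event says that the non-transposed walk $\log|A_k\cdots A_1 y|$ stays below a level. That is the ladder-tail bound for the \emph{negated} walk. It follows from the same martingale approximation, but it is not literally your first-half estimate.

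\emph{A slip.} In Step 2, ``last $n-2m'$ steps'' should read ``last $m'$ steps''.

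With these points in place, the conditional probability of the middle window is at most $c(1+z)/\sqrt n$ uniformly. Independence of $\mathscr F_m$ and $\sigma(A_{n-m'+1},\dots,A_n)$ then yields the product of the three factors, and small $n$ are absorbed into the constant.
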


The preceding bound implies a useful estimate for the distribution of the stopping times. 
%The following lemma can be deduced from Lemma \ref{Lem-CLLT-bound}. 

\begin{lemma}\label{Lem-exit-time-estimate}
Assume \ref{proximality}-\ref{Condition-Furstenberg-Kesten}. 
Then, for any $b \geq 0$, there is a constant $c = c(b) >0$ such that for all $x \in \bb S^{d-1}_+$ and $n \geq 1$, 
\begin{align}\label{bound tau x}
\bb P (\tau_{x, b} = n) \leq c n^{- 3/2}. 
\end{align}
Moreover, there is a constant $c >0$ such that for all $n \geq 1$, 
\begin{align}\label{bound T}
\bb P (T = n) \leq c n^{- 3/2}. 
\end{align}
\end{lemma}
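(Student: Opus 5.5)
The plan is to deduce \eqref{bound tau x} from Lemma \ref{Lem-CLLT-bound}, and then get \eqref{bound T} from \eqref{bound tau x} through the sandwich \eqref{important inequality}.

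\textbf{Bound on $\bb P(\tau_{x,b}=n)$.} For $n\ge 2$, condition on $\mathscr F_{n-1}$. The event $\{\tau_{x,b}=n\}$ requires $\tau_{x,b}>n-1$, $b+S_{n-1}^x \ge 0$ and $b+S_n^x<0$. Write $S_n^x - S_{n-1}^x = \log|A_n^\top X_{n-1}^x|$; this increment is at least $\log \iota(A_n^\top) \ge \log\iota(A_n)-\log\varkappa$ by \ref{Condition-Furstenberg-Kesten}. Hence $b+S_{n-1}^x< \xi_n := \max\{0,-\log\iota(A_n^\top)\}$, where $\xi_n$ is independent of $\mathscr F_{n-1}$. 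Decompose according to $\xi_n\in[k,k+1)$, $k\ge 0$, and use independence:
\begin{align*}
\bb P(\tau_{x,b}=n) \le \sum_{k\ge0} \bb P(\xi_n\ge k)\, \bb P\big(b+S_{n-1}^x\in[0,k+1],\ \tau_{x,b}>n-1\big).
\end{align*}
Lemma \ref{Lem-CLLT-bound} with $t=0$ and $z=k+1$ bounds the second factor by $c(1+b)(k+2)^2 (n-1)^{-3/2}$.

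\textbf{Summability in $k$.} This is the main obstacle. It needs $\sum_k k^2\,\bb P(\xi\ge k)<\infty$, i.e.\ $\bb E[(\log^+ \iota(A)^{-1})^3]<\infty$, which follows from $\bb E N(A)^\delta<\infty$ in \ref{Condition-moments-A-B}, because exponential moments of $\log N(A)$ dominate every polynomial moment. This gives $\bb P(\tau_{x,b}=n)\le c(b)n^{-3/2}$ uniformly in $x$, since the constant in Lemma \ref{Lem-CLLT-bound} is uniform. The case $n=1$ is trivial, as probabilities are at most $1$.

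\textbf{Bound on $\bb P(T=n)$.} By \eqref{important inequality}, $\{T=n\}\subseteq\{T\ge n\}\cap\{T\le n\}$. I would not pass through tail bounds, because $\bb P(\tau>n)\sim n^{-1/2}$ only. Instead, mirror the argument above directly for $T$. Take $x=e_1$, or any fixed $x$, and set $S'_k=\log\|A_k^\top\cdots A_1^\top\|$, so that $S_k^x\le S'_k\le S_k^x+2\log\varkappa$. On $\{T=n\}$ we have $S'_{n-1}\ge0$ for all earlier times, so $\tau_{x,2\log\varkappa}>n-1$. We also have $S'_n<0$, so $S_n^x<0$, and submultiplicativity together with \ref{Condition-Furstenberg-Kesten} gives $S'_n \ge S'_{n-1}+\log\iota(A_n)-c$. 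Therefore $2\log\varkappa+S_{n-1}^x \in[0,\xi_n+c']$ on $\{\tau_{x,2\log\varkappa}>n-1\}$. The same decomposition over $\xi_n$ and Lemma \ref{Lem-CLLT-bound} with $b=2\log\varkappa$ then give $\bb P(T=n)\le c\,n^{-3/2}$.
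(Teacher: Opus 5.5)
Your proof is correct and follows essentially the paper's route: condition on $\mathscr F_{n-1}$, use the independence of the last increment $A_n$, apply Lemma \ref{Lem-CLLT-bound}, and sum using moments of $\log N(A)$. For $T$, you likewise reduce $\{T=n\}$ to an event involving $\{\tau_{x,2\log\varkappa}>n-1\}$ through the norm comparison $S_k^x\le \log\|A_k^\top\cdots A_1^\top\|\le S_k^x+2\log\varkappa$. The differences are only bookkeeping: you decompose by the size of the last jump and apply the lemma on $[0,k+1]$, which needs a third moment of $\log^+\iota(A)^{-1}$, whereas the paper decomposes by the position of $b+S_{n-1}^x$ in unit intervals and uses Markov's inequality with a $(2+\delta)$-moment. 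Also, the bound $S'_n\ge S'_{n-1}+\log\iota(A_n^\top)$ follows from the definition of $\iota$, namely $|A_n^\top M v|\ge \iota(A_n^\top)|Mv|$, rather than from submultiplicativity.
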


\begin{proof}
We only prove \eqref{bound T} since the proof of \eqref{bound tau x} is similar. 
The case $n = 1$ is immediate after increasing the constant, so assume $n \geq 2$. 
Note that 
\begin{align*}
\bb P (T = n)  & = \bb P \left( \log\| A_n^\top  \cdots A_1^\top \| < 0, T > n-1 \right)  \notag\\
& \leq  \bb P \left( 2 \log \varkappa + \log| A_n^\top  \cdots A_1^\top  x | < 2 \log \varkappa, \tau_{x, 2 \log \varkappa} > n-1 \right). 
\end{align*}
Using first the Markov property and then Markov's inequality together with Lemma \ref{Lem-CLLT-bound}, we get 
\begin{align*}
& \bb P \left( 2 \log \varkappa +  \log| A_n^\top  \cdots A_1^\top  x | < 2 \log \varkappa, \tau_{x, 2 \log \varkappa} > n-1 \right) \notag\\
& =  \int_0^{\infty}  \int_{\bb S_+^{d-1}} \bb P \left( t + \log|A_n^\top  x'| < 2 \log \varkappa \right)   \notag\\
& \qquad \qquad\quad
 \times \bb P \left( X_{n-1}^x \in dx',  2 \log \varkappa + \log| A_{n-1}^\top  \cdots A_1^\top  x | \in dt,  \tau_{x, 2 \log \varkappa} > n-1  \right)  \notag\\
& \leq  c \bb E (\log N(A_n)^{2 + \delta})   \int_0^{\infty}  \frac{1}{(1 + t)^{2 + \delta}}
 \bb P \left( 2 \log \varkappa + \log| A_{n-1}^\top  \cdots A_1^\top  x | \in dt,  \tau_{x, 2 \log \varkappa} > n-1  \right)  \notag\\
& \leq  c'   \sum_{j = 0}^{\infty}  \frac{1}{(1 + j)^{2 + \delta}}
 \bb P \left( 2 \log \varkappa + \log| A_{n-1}^\top  \cdots A_1^\top  x | \in [j, j+1),  \tau_{x, 2 \log \varkappa} > n-1  \right)  \notag\\
& \leq  c'  \sum_{j = 0}^{\infty}  \frac{1}{(1 + j)^{2 + \delta}} (1+j) n^{-3/2}  \leq c'' n^{-3/2}. 
\end{align*}
This proves \eqref{bound T}. 
\end{proof}

%We next recall the killed renewal estimate from \cite[Theorem 3.5]{GMX25}. 
From the killed renewal estimate in \cite[Theorem 3.5]{GMX25}, we have the following result.

\begin{proposition}\label{Prop-renewal-thm}
Assume \ref{proximality}-\ref{Condition-Furstenberg-Kesten}. 
Then, for any $b \geq 0$, there is a constant $c = c(b) >0$ such that for all $x \in \bb S^{d-1}_+$, all $t \in \bb R$ and $a >0$,
\begin{align}\label{eq:renewal.stopping.time}
\bb E \Big[ \sum_{n=0}^{\tau_{x, b} -1} \ds{1}_{[t,t+a]}(S_n^x)\Big]  \le  c \max\{a,1\}. 
\end{align}
Moreover, there is a constant $C >0$ such that for all $x \in \bb S^{d-1}_+$, all $t \in \bb R$ and $a >0$,
\begin{align}\label{eq:renewal.stopping.time002}
\bb E \Big[ \sum_{n=0}^{T -1} \ds{1}_{[t,t+a]}(S_n^x)\Big]  \le  C \max\{a,1\}.
\end{align}
\end{proposition}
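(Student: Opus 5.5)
The plan is to deduce \eqref{eq:renewal.stopping.time} from the killed renewal theorem \cite[Theorem 3.5]{GMX25}, and then obtain \eqref{eq:renewal.stopping.time002} by comparing $T$ with a killed exit time through \eqref{important inequality}.

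\textbf{Step 1: unit intervals.} I first reduce to intervals of length one. For $a>0$, cover $[t,t+a]$ by the $\lceil a\rceil \le 2\max\{a,1\}$ intervals $[t+j,t+j+1]$, $0\le j<\lceil a\rceil$. Since the summands are nonnegative, it suffices to show that
\begin{align*}
U_b(x,t) := \bb E \Big[ \sum_{n=0}^{\tau_{x,b}-1} \ds{1}_{[t,t+1]}(S_n^x)\Big] \le c(b)
\end{align*}
uniformly in $x \in \bb S^{d-1}_+$ and $t\in\bb R$.

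\textbf{Step 2: the uniform bound on $U_b$.} Writing $\ds{1}_{[t,t+1]}(S_n^x)=\ds{1}_{[t+b,t+b+1]}(b+S_n^x)$, the quantity $U_b(x,t)$ is exactly the renewal measure of the walk $b+S_n^x$, started at $b\ge 0$ and killed when it first becomes negative, evaluated on a unit interval. \cite[Theorem 3.5]{GMX25} is stated under \ref{proximality}--\ref{Condition-Furstenberg-Kesten}. I expect it to give either the asymptotics of this killed renewal measure, of the form $c\,V(x,b)\,|I|$ as the interval goes to $+\infty$, or directly a uniform bound of the form $c(1+b)|I|$ for intervals of length at least one. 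In the second case the claim is immediate. In the first case I combine the limit with local boundedness of $t\mapsto U_b(x,t)$ to get a uniform bound. The intervals lying below $-b$ contribute nothing, since before time $\tau_{x,b}$ we have $b+S_n^x \ge 0$. The uniformity in $x$ follows because the harmonic function $V(x,b)$ in \cite{GMX25} is bounded by $c(1+b)$ uniformly in $x$.

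\textbf{Main obstacle.} The delicate point is this uniformity in $t$ for intervals at moderate heights, together with uniformity in the starting direction. A direct summation of Lemma \ref{Lem-CLLT-bound} over $n$ loses a factor of $t$. That is why I rely on the renewal estimate of \cite{GMX25}, and not on the conditioned local limit bound alone, for this step.

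\textbf{Step 3: the bound for $T$.} For \eqref{eq:renewal.stopping.time002}, I use \eqref{important inequality}, which gives $T \le \tau_{x,2\log\varkappa}$ almost surely. Since the summands are nonnegative,
\begin{align*}
\sum_{n=0}^{T-1} \ds{1}_{[t,t+a]}(S_n^x) \le \sum_{n=0}^{\tau_{x,2\log\varkappa}-1} \ds{1}_{[t,t+a]}(S_n^x).
\end{align*}
Taking expectations and applying \eqref{eq:renewal.stopping.time} with $b = 2\log\varkappa$ gives \eqref{eq:renewal.stopping.time002} with $C = c(2\log\varkappa)$. This constant is independent of $x$, $t$ and $a$.
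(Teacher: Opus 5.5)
Your proposal follows the paper's proof: \eqref{eq:renewal.stopping.time} comes from the killed renewal estimate \cite[Theorem 3.5]{GMX25}, and \eqref{eq:renewal.stopping.time002} then follows by monotonicity from $T\le\tau_{x,2\log\varkappa}$ in \eqref{important inequality}. One correction: \cite[Theorem 3.5]{GMX25} does give the uniform bound directly (your ``second case''; your pointwise-asymptotics fallback would not give uniformity in $x$ anyway), but it is stated under a tilted measure $\bb Q^\alpha_x$ with $\alpha>0$ that centers the walk, not under $\bb P$ with \ref{proximality}--\ref{Condition-Furstenberg-Kesten}, so you must note, as the paper does, that its proof carries over with $\alpha=0$ and $r_\alpha\equiv 1$, i.e.\ to $\bb P$ itself in the critical regime \ref{Condition-Lyapunov-zero}.
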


\begin{proof}
Note that in \cite{GMX25} the result \eqref{eq:renewal.stopping.time} was formulated for a probability measure $\bb Q_x^\alpha$ defined as a measure change of $\bb P$ with respect to a parameter $\alpha>0$ and a harmonic function $r_\alpha$ on $\bb S^{d-1}_+$. 
In \cite{GMX25}, the change of measure is chosen so that the additive component $S_n$ has drift zero. The proof of Theorem 3.5 in \cite{GMX25} remains valid in this case by setting $\alpha=0$ and $r_\alpha$ being constant equal to 1; i.e., upon replacing $\bb Q_x^\alpha$ by $\bb P$.

Using \eqref{important inequality} and \eqref{eq:renewal.stopping.time} with $b = 2 \log \varkappa$, 
we get \eqref{eq:renewal.stopping.time002}. 
\end{proof}

From Proposition \ref{Prop-renewal-thm}, 
we deduce the exponential integrability of the overshoots that will be needed when applying optional stopping in the next section. 
%the following integrability result, which will be used in the next section. 

\begin{lemma}\label{Lem-integrability-S-tau}
Assume \ref{proximality}-\ref{Condition-Furstenberg-Kesten}. 
Let $\delta >0$ be as in \ref{Condition-moments-A-B}. 
Then there is a constant $c >0$ such that for all $x \in \bb S^{d-1}_+$, $\gamma \in (0, \delta/4)$ and $n \geq 1$,
\begin{align}\label{integrability-S-tau-x}
\bb E e^{- \gamma S_{\tau_x}^x} \leq c, 
\quad  \bb E e^{- \gamma S_{T}^x} \leq c 
\quad \mbox{and} \quad \bb E e^{- \gamma S_{n \wedge T}^x} \leq c. 
\end{align}
In addition, there is a constant $c >0$ such that for all $n \geq 1$ and $x \in \bb S^{d-1}_+$, $\bb P$-almost surely, 
\begin{align}\label{integrability-X-tau-x}
\min_{1 \leq i \leq d}  (X_{n}^x)_i  \geq c, 
\quad  
\min_{1 \leq i \leq d} (X_{\tau_x}^x)_i  \geq c 
\quad \mbox{and}  \quad  
\min_{1 \leq i \leq d}  (X_{T}^x)_i  \geq c. 
\end{align}
\end{lemma}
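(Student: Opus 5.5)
The plan is to treat \eqref{integrability-X-tau-x} first, since it is immediate, and then derive the exponential bounds in \eqref{integrability-S-tau-x} from the renewal estimate of Proposition \ref{Prop-renewal-thm} combined with the moment assumption \ref{Condition-moments-A-B}.

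\textbf{The bounds \eqref{integrability-X-tau-x}.} Lemma \ref{lem equiv Kesten} gives, almost surely, $\min_i (X_n^x)_i \geq \epsilon = (\varkappa d)^{-1}$ for every $n\geq 1$. Since $\tau_x \geq 1$ and $T\geq 1$ by definition, the same bound holds at $X_{\tau_x}^x$ and $X_T^x$ on the events $\{\tau_x<\infty\}$ and $\{T<\infty\}$. These events have full probability, because Lemma \ref{Lem-exit-time-estimate} gives $\bb P(\tau_x=\infty)=\lim_n \bb P(\tau_x>n)=0$. So we may take $c=\epsilon$.

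\textbf{The overshoot at $\tau_x$.} Decompose according to the last step before exit. On $\{\tau_x = n+1\}$ we have $S_n^x \geq 0$ and $S_{n+1}^x = S_n^x + \log|A_{n+1}^\top X_n^x| <0$. Since $|A^\top X_n^x|\geq \iota(A^\top)$, this gives
\[
e^{-\gamma S_{\tau_x}^x} \leq e^{-\gamma S_n^x}\,\iota(A_{n+1}^\top)^{-\gamma}\,\ds 1\{\log\iota(A_{n+1}^\top) < -S_n^x\}.
\]
Split $S_n^x\in[j,j+1)$ for $j\geq 0$ and use that $A_{n+1}$ is independent of $\mathscr F_n$. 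This yields
\[
\bb E e^{-\gamma S_{\tau_x}^x} \leq \sum_{j\ge0}\bb E\Big[\sum_{n=0}^{\tau_x-1}\ds 1_{[j,j+1)}(S_n^x)\Big]\,\sup_{u\ge j}h_\gamma(u),
\qquad h_\gamma(u)=\bb E\big[\iota(A^\top)^{-\gamma}\ds 1\{\iota(A^\top)^{-1}>e^{u}\}\big]e^{-\gamma u}.
\]
The renewal bound \eqref{eq:renewal.stopping.time} with $b=0$ controls each expectation by $c$. Write $\iota(A^\top)^{-\gamma}\ds 1\{\iota(A^\top)^{-1}>e^u\}\leq \iota(A^\top)^{-2\gamma}e^{-\gamma u}$. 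Condition \ref{Condition-Furstenberg-Kesten} gives $\iota(A^\top)^{-1}\leq c\,\iota(A)^{-1}\leq cN(A)$, so $h_\gamma(u)\le c\,\bb E N(A)^{2\gamma}\, e^{-2\gamma u}$ with $2\gamma<\delta$. The sum $\sum_j e^{-2\gamma j}$ then converges. All constants are uniform in $x$.

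\textbf{The bounds involving $T$.} For $e^{-\gamma S_T^x}$, repeat the same decomposition. On $\{T=n+1\}$ we only know $S_n^x\geq -2\log\varkappa$ by \eqref{important inequality}, and the exit condition gives $S_{n+1}^x \geq \log\|\cdot\| - 2\log\varkappa$ with $\log\|\cdot\|<0$. So $e^{-\gamma S_T^x}\le \varkappa^{2\gamma}e^{-\gamma S_n^x}\iota(A_{n+1}^\top)^{-\gamma}\ds 1\{\cdots\}$. Here the renewal estimate \eqref{eq:renewal.stopping.time002} replaces \eqref{eq:renewal.stopping.time}, and the index $j$ runs from $-2$ up, which only changes constants. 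For $e^{-\gamma S_{n\wedge T}^x}$, split into $\{T\le n\}$, which is bounded by $\bb E e^{-\gamma S_T^x}$, and $\{T>n\}$. On the second event $\log\|A_n^\top\cdots A_1^\top\|\geq 0$, hence $S_n^x \ge -2\log\varkappa$ and $e^{-\gamma S_n^x}\le \varkappa^{2\gamma}$.

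\textbf{Expected obstacle.} The only delicate point is the overshoot bound. The tail of $\iota(A)^{-1}$ has to be integrated against the renewal measure of the killed walk, and this requires the renewal bound \eqref{eq:renewal.stopping.time} uniformly in $t$. The restriction $\gamma<\delta/4$ leaves room for the H\"older and Furstenberg--Kesten comparisons between $\iota(A^\top)$ and $N(A)$.
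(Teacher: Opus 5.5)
Your argument is correct, but it takes a different route from the paper's. The paper bounds the undershoot crudely. Since $S_{\tau_x-1}^x\ge 0>S_{\tau_x}^x$, it gets $e^{-\gamma S_{\tau_x}^x}\le N(A_{\tau_x}^\top)^{\gamma}$. It then removes the dependence between $A_k$ and $\{\tau_x=k\}$ by H\"older's inequality with exponents $4$ and $4/3$, and sums using the point estimate $\bb P(\tau_x=k)\le ck^{-3/2}$ of Lemma \ref{Lem-exit-time-estimate}, which leaves the series $\sum_k k^{-9/8}$. The H\"older step is where the restriction $\gamma<\delta/4$ comes from.

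You instead condition on $\mathscr F_n$ just before the exit step, use the independence of $A_{n+1}$, and integrate the tail of $\iota(A^\top)^{-1}$ against the killed renewal measure of Proposition \ref{Prop-renewal-thm}. This uses the Markov structure rather than a crude decoupling and gives decay $e^{-2\gamma j}$ in the pre-exit height. It is not tied to $\gamma<\delta/4$ (as written it covers $\gamma\le\delta/2$). It is also exactly the deduction announced in the sentence preceding the lemma. The paper's version is shorter and needs only the exit-time estimate, not the renewal theorem. Your explicit treatment of $n\wedge T$, via $S_n^x\ge-2\log\varkappa$ on $\{T>n\}$, is clearer than the paper's ``similar''.

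Three small repairs are needed.

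First, Lemma \ref{Lem-exit-time-estimate} does not yield $\bb P(\tau_x=\infty)=0$. A summable bound on $\bb P(\tau_x=n)$ carries no information about the mass at infinity: a walk with positive drift has exponentially small $\bb P(\tau=n)$, yet $\bb P(\tau=\infty)>0$. Almost sure finiteness of $\tau_x$, and hence of $T\le\tau_{x,2\log\varkappa}$, is a separate fact (the centered walk oscillates under \ref{Condition-Lyapunov-zero}), and the paper takes it for granted. Your exponential bounds do not depend on it, since the decomposition over $\{\tau_x=n+1\}$ only computes the expectation on $\{\tau_x<\infty\}$.

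Second, on $\{T>n\}$ the index $j$ starts at $-\lceil 2\log\varkappa\rceil$, not at $-2$.

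Third, your constant $\sum_{j\ge 0}e^{-2\gamma j}$ blows up as $\gamma\downarrow 0$, while the statement asks for $c$ uniform in $\gamma\in(0,\delta/4)$. You can fix this by writing $Y=\iota(A^\top)^{-1}e^{-u}$ and using $Y^{\gamma}\ds{1}_{\{Y>1\}}\le Y^{\delta/2}$. This gives $h_\gamma(u)\le e^{-\delta u/2}\,\bb E\,\iota(A^\top)^{-\delta/2}$ uniformly in $\gamma$. Alternatively, note that $\gamma\mapsto \bb E\, e^{-\gamma S_{\tau_x}^x}$ is nondecreasing because $S_{\tau_x}^x<0$ (and likewise $S_T^x<0$).
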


\begin{proof}
Since $\tau_x$ is the first time when $(S_n^x)_{n \geq 1}$ becomes negative, we have 
\begin{align*}%\label{}
\bb E e^{- \gamma S_{\tau_x}^x} 
\leq \bb E e^{ \gamma |\log |A_{\tau_x}^\top X_{\tau_x - 1}^x||} 
\leq  \bb E e^{ \gamma \log N(A_{\tau_x}^\top)} 
= \sum_{k = 1}^{\infty} \bb E  \left(  e^{ \gamma \log N(A_k^\top)}  \ds 1_{\{ \tau_x = k \}} \right). 
\end{align*}
Using H\"older's inequality and Lemma \ref{Lem-exit-time-estimate}, 
and taking $\gamma \in (0, \delta/4)$ with $\delta >0$ from condition \ref{Condition-moments-A-B},  we obtain 
\begin{align*}
 \sum_{k = 1}^{\infty} \bb E  \left(  e^{ \gamma \log N(A_k^\top)}  \ds 1_{\{ \tau_x = k \}}  \right)  
& \leq   \sum_{k = 1}^{\infty}   \bb E^{1/4}  \left(  e^{ 4 \gamma \log N(A_k^\top)}  \right)  \bb P^{3/4} (\tau_x = k)  \notag\\
& \leq  c  \sum_{k = 1}^{\infty}  \bb P^{3/4} (\tau_x = k)  \notag\\
& \leq c'  \sum_{k = 1}^{\infty}  k^{- 9/8} < \infty. 
\end{align*}
This proves the first inequality in \eqref{integrability-S-tau-x}.
The proof of the second and third ones is similar by replacing $\tau_x$ with $T$ and $n \wedge T$, respectively.

By Lemma \ref{lem equiv Kesten}, we see that $\bb P$-almost surely, $\min_{1 \leq i \leq d} (X_{n}^x)_i  \geq c$ for any $n \geq 1$.
Hence this also holds for almost surely finite stopping times $\tau_x$ and $T$. 
\end{proof}

\section{Proof of the main results}\label{Section-proof}

We now turn to the proofs of the main results (Theorems \ref{thm:main2} and \ref{thm:main3}). 
The argument proceeds in two steps. 
We first convert the tail problem for the invariant Radon measure into a Poisson equation 
along the centered Markov random walk and stop this equation at a descending ladder epoch. 
This yields uniform control of the slowly varying function appearing in the tail estimate of \cite{BPP21}. 
We then study vague accumulation points of suitably rescaled invariant measures 
and use the renewal theorem in Section \ref{Subsection renewal} for products of random matrices
 to identify their one-dimensional projections 
as multiples of the Haar measure $ds/s$. 

\subsection{A stopped Poisson identity}
%The proof strategy consists in transforming the study of tails of the invariant measure into a Poisson equation,
%and then applying the renewal theorem in Section \ref{Subsection renewal} for products of random matrices. 
%Let $f_{\Phi}$ and $\Psi_{\Phi}$ be from \eqref{def-function-f-Phi} and \eqref{Poisson-Psi-Phi}, respectively.
 The following result says that $f_{\Phi}$ and $\Psi_{\Phi}$ defined in \eqref{def-function-f-Phi} and \eqref{Poisson-Psi-Phi} 
 satisfy the Poisson equation for the stopped process. 

\begin{lemma}\label{Lem-identity-f-Phi}
%Suppose that $f_{\Phi}$ and $\Psi_{\Phi}$ satisfy the Poisson equation \eqref{Poisson-equation-f-Psi-001}.
%Then $f_{\Phi}$ and $\Psi_{\Phi}$ also satisfy the following Poisson equation: 
For any $s \in \bb R$ and $x \in (\bb S_+^{d-1})^\circ$, we have 
\begin{align}\label{eq:stopped Poisson}
\bb E f_{\Phi} \Big( e^{S_{T}^x - s} X_{T}^x \Big) 
- f_{\Phi} (e^{-s} x) 
= \bb E \sum_{k = 0}^{T -1} \Psi_{\Phi} \Big( e^{S_k^x - s} X_k^x \Big), 
\end{align}
for the stopping time $T$  defined by \eqref{def stopping T}. 
\end{lemma}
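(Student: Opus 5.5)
The plan is to apply optional stopping to the martingale $(M_n(e^{-s}x))_{n\ge0}$ of Lemma \ref{Lem-martingale} at the bounded stopping time $n\wedge T$, and then let $n\to\infty$. Since $n \wedge T$ is bounded, optional stopping gives
\begin{align*}
\bb E f_\Phi\Big(e^{S_{n\wedge T}^x-s}X_{n\wedge T}^x\Big) - f_\Phi(e^{-s}x) = \bb E \sum_{k=0}^{(n\wedge T)-1}\Psi_\Phi\Big(e^{S_k^x-s}X_k^x\Big).
\end{align*}
It remains to pass to the limit on both sides. First note that $T<\infty$ almost surely, since $\sum_n \bb P(T=n)\le 1$ by Lemma \ref{Lem-exit-time-estimate} combined with recurrence of the centered walk; more simply, $\bb P(T>n)\le \sum_{m>n} c m^{-3/2}\to 0$ shows $\bb P(T=\infty)=0$ provided $T$ is a proper variable, which follows from $\lambda=0$ and the oscillation of $S_n^x$.

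\emph{Left-hand side.} The terms converge almost surely because $n\wedge T=T$ eventually. For domination I would use Lemma \ref{Lem-f-Phi-decay-rate-infinity} together with \eqref{integrability-X-tau-x}, which give
\begin{align*}
\Big|f_\Phi\Big(e^{S_{n\wedge T}^x-s}X_{n\wedge T}^x\Big)\Big| \le c\, e^{\gamma s} e^{-\gamma S^x_{n\wedge T}}.
\end{align*}
For $n=0$ we use that $x$ lies in the interior. This gives uniform integrability once we have a slightly higher moment: by Lemma \ref{Lem-integrability-S-tau} with $\gamma'\in(\gamma,\delta/4)$, $\sup_n\bb E e^{-\gamma' S^x_{n\wedge T}}<\infty$. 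Hence Vitali's theorem yields convergence to $\bb E f_\Phi(e^{S_T^x-s}X_T^x)$.

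\emph{Right-hand side.} I would dominate $\sum_{k=0}^{T-1}|\Psi_\Phi(e^{S_k^x-s}X_k^x)|$ in $L^1$ and then apply dominated convergence. By Lemma \ref{Lem-integrability} (for $k\ge1$, $X_k^x$ lies in the compact interior set of Lemma \ref{lem equiv Kesten}, so $|e^{S_k-s}X_k|\asymp e^{S_k-s}$), we have
\begin{align*}
|\Psi_\Phi(e^{S_k^x-s}X_k^x)|\le h(S_k^x-s), \qquad h(u)=c\min\{e^{\delta u/4},e^{-\gamma u}\}.
\end{align*}
Decompose according to unit intervals $[j,j+1)$, $j\in\bb Z$. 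Proposition \ref{Prop-renewal-thm}, \eqref{eq:renewal.stopping.time002}, gives
\begin{align*}
\bb E\sum_{k=0}^{T-1} h(S_k^x-s)\le \sum_{j\in\bb Z}\sup_{[j,j+1)}h\cdot \bb E\sum_{k=0}^{T-1}\ds 1_{[s+j,s+j+1]}(S_k^x)\le C\sum_j \sup_{[j,j+1)}h<\infty,
\end{align*}
by direct Riemann integrability. Monotone/dominated convergence then finishes the argument.

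The main obstacle is the uniform integrability of the stopped $f_\Phi$ terms, which needs the exponential moment bound of Lemma \ref{Lem-integrability-S-tau} uniformly in $n$ with an exponent strictly larger than the $\gamma$ used in the decay bound. Both exponents can be chosen in $(0,\delta/4)$, so this is available. Almost-sure finiteness of $T$ should also be justified, via the recurrence of the centered walk (for instance, by the renewal bound, which forces $S_n$ to exit).
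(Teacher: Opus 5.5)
Your proposal is correct and follows the paper's proof essentially step by step: optional stopping at $n\wedge T$, uniform integrability of the stopped $f_\Phi$-terms through a higher moment (exponent $p\gamma<\delta/4$) from Lemma \ref{Lem-integrability-S-tau}, and domination of $\sum_{k<T}|\Psi_\Phi|$ via direct Riemann integrability and Proposition \ref{Prop-renewal-thm}. The one weak spot is your first justification of $T<\infty$ a.s.: the bound $\sum_{m>n} c\,m^{-3/2}$ only controls $\bb P(n<T<\infty)$, so that argument is circular; the real reason is the oscillation of the centered walk (e.g.\ $\bb P(\tau_{x,2\log\varkappa}>n)\to0$ together with $T\le\tau_{x,2\log\varkappa}$), a fact the paper also uses without proof.
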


\begin{proof}
Since $(M_n(e^{-s} x), \scr F_n)$ is a martingale with mean $f_\Phi (e^{-s} x)$ (see Lemma \ref{Lem-martingale}), 
by the optional stopping theorem, 
we get, for any $n \geq 1$, 
\begin{align*}%\label{}
\bb E M_{n \wedge T} (e^{-s} x) = f_{\Phi} (e^{-s} x). 
\end{align*}
Hence, from \eqref{def martingale Mn} we get 
\begin{align*}%\label{}
\bb E f_{\Phi} \left( e^{S_{n \wedge T}^x - s} X_{n \wedge T}^x \right) 
- f_{\Phi} (e^{-s} x) = \bb E \sum_{k = 0}^{(n \wedge T) -1} \Psi_{\Phi} \left( e^{S_k^x - s} X_k^x \right). 
\end{align*}
By Lemma \ref{Lem-f-Phi-decay-rate-infinity}, we have that, for any $\gamma \in (0, \delta/2)$,
with $\delta$ from condition \ref{Condition-moments-A-B},
 and any function $\Phi$ in $C^1_c(\bb R_+^*)$, 
there is a constant $c >0$ (depending on $\Phi$) such that for any $n \geq 1$, $s \in \bb R$ and $x \in (\bb S_+^{d-1})^\circ$, 
\begin{align*}%\label{}
\left| f_\Phi \left(e^{S_{n \wedge T}^x-s} X_{n \wedge T}^x \right) \right|
\leq  c e^{\gamma s}  \Big(\min_{1 \leq i \leq d} (X_{n \wedge T}^x)_i  \Big)^{-\gamma}  e^{- \gamma S_{n \wedge T}^x}. 
\end{align*}
By Lemma \ref{lem equiv Kesten} and the fact that $T<\infty$, $\bb P$-almost surely,
there exists a constant $c>0$ such that, for all $n \geq 1$, we have $\min_{1 \leq i \leq d} (X_{n \wedge T}^x)_i \geq c$, $\bb P$-almost surely. 
Choose $p >1$ such that $p\gamma<\delta/4$. 
Hence, by \eqref{integrability-S-tau-x} of Lemma \ref{Lem-integrability-S-tau}, it follows that 
there exists a constant $c>0$ such that for any $n \geq 1$ and $x \in (\bb S_+^{d-1})^\circ$, 
\begin{align*}%\label{}
\sup_{n \geq 1} 
\bb E \left| f_\Phi\left(e^{S_{n\wedge T}^x-s}X_{n\wedge T}^x\right) \right|^p 
\leq  c e^{p\gamma s}  \,  \bb E \left( e^{- p \gamma S_{n \wedge T}^x} \right) \leq c < \infty, 
\end{align*}
so that the family $(f_\Phi (e^{S_{n\wedge T}^x-s}X_{n\wedge T}^x ))_{n \geq 1}$ of random variables is uniformly integrable. 
%\begin{align*}%\label{}
%\bb E \left| f_\Phi \left(e^{S_{n \wedge T}^x-s} X_{n \wedge T}^x \right) \right|
% \leq c e^{\gamma s} \bb E \left( \Big(\min_{1 \leq i \leq d} (X_{n \wedge T}^x)_i  \Big)^{-\gamma}  e^{- \gamma S_{n \wedge T}^x} \right). 
%%\leq c e^{\gamma s} \big[ \bb E \iota(A_1^\top)^{\gamma} \big]^n < \infty, 
%\end{align*}
%By the Cauchy-Schwarz inequality and Lemma \ref{Lem-integrability-S-tau}, we get that for all $\gamma \in (0, \delta/8)$ and $n \geq 1$, 
%\begin{align*}%\label{}
%\bb E \left(  \Big(\min_{1 \leq i \leq d} (X_{n \wedge T}^x)_i  \Big)^{-\gamma}  e^{- \gamma S_{n \wedge T}^x} \right)
% \leq  \bb E^{1/2} \left(  \Big(\min_{1 \leq i \leq d} (X_{n \wedge T}^x)_i  \Big)^{- 2\gamma}  \right) 
%  \bb E^{1/2} \left( e^{- 2\gamma S_{n \wedge T}^x} \right) 
%%& \leq  c \bb E^{1/2} \left( e^{- 2\gamma S_{T}^x} \right)
%\leq  c'. 
%% < \infty,  
%\end{align*}
%%where we have used the fact that $S_{n \wedge \tau_x}^x \geq 0$ on the set $\{n< \tau_x\}$. 
Therefore, we can apply the dominated convergence theorem to get that
\begin{align*}%\label{}
\lim_{n \to \infty} \bb E f_{\Phi} \left( e^{S_{n \wedge T}^x - s} X_{n \wedge T}^x \right) 
= \bb E f_{\Phi} \left( e^{S_{T}^x - s} X_{T}^x \right). 
\end{align*}
Notice that 
\begin{align*}%\label{}
\left| \bb E \sum_{k = 0}^{(n \wedge T) -1} \Psi_{\Phi} \left( e^{S_k^x - s} X_k^x \right) \right|
\leq  \bb E \sum_{k = 0}^{T -1} \left| \Psi_{\Phi} \left( e^{S_k^x - s} X_k^x \right) \right|. 
\end{align*}
Since the function $y \mapsto \Psi_{\Phi}(y)$
is directly Riemann integrable on $\Rdpint$ (see Lemma \ref{Lem-integrability}),
using Fubini's theorem and Proposition \ref{Prop-renewal-thm}, we obtain 
\begin{align*}%\label{}
\bb E \sum_{k = 0}^{T -1} \left| \Psi_{\Phi} \left( e^{S_k^x - s} X_k^x \right) \right|
& \leq \bb E \sum_{k = 0}^{T -1}  \sum_{m \in \bb Z} \sup_{r \in [e^m, e^{m+1}) } \sup_{x \in (\bb S_+^{d-1})^\circ}  |\Psi_{\Phi}( r x )|
\ds{1}_{[e^m, e^{m+1})}(e^{S_k^x - s})  \notag\\
& = \bb E \sum_{k = 0}^{T -1}  \sum_{m \in \bb Z} \sup_{r \in [e^m, e^{m+1}) } \sup_{x \in (\bb S_+^{d-1})^\circ}  |\Psi_{\Phi}( r x )|
\ds{1}_{[s+m, s+m+1)}(S_k^x) \notag\\
& =  \sum_{m \in \bb Z} \sup_{r \in [e^m, e^{m+1}) } \sup_{x \in (\bb S_+^{d-1})^\circ}  |\Psi_{\Phi}( r x )|
 \bb E \sum_{k = 0}^{T -1}  \ds{1}_{[s+m, s+m+1)}(S_k^x) \notag\\
& \leq  c  \sum_{m \in \bb Z} \sup_{r \in [e^m, e^{m+1}) } \sup_{x \in (\bb S_+^{d-1})^\circ}  |\Psi_{\Phi}( r x )| < \infty. 
\end{align*}
Therefore, again by the dominated convergence theorem, we get 
\begin{align*}%\label{}
\lim_{n \to \infty} 
\bb E \sum_{k = 0}^{(n \wedge T) -1} \Psi_{\Phi} \left( e^{S_k^x - s} X_k^x \right)
= \bb E \sum_{k = 0}^{T -1} \Psi_{\Phi} \left( e^{S_k^x - s} X_k^x \right). 
\end{align*}
The conclusion follows. 
\end{proof}

\subsection{Comparison with the slowly varying function}
We next compare $f_\Phi$ evaluated at the ladder direction $X_T^x$
with the slowly varying function $L$. 
This comparison transfers the radial estimate of \cite{BPP21}  
to the scalar projections that arise naturally from the Poisson equation.
 
We consider particular functions $\Phi \in C_c^1(\bb R_+^*)$ that approximate indicator functions. 
%We now choose a family of smooth cutoff functions $\Phi$. 
%Let 
%\begin{align*}%\label{}
%\kappa(t) = (1 - |t|) \ds{1}_{[-1, 1]}(t),  \quad t \in \bb R. 
%\end{align*}
%Recall that by \eqref{integrability-X-tau-x}, we have $\min_{1 \leq i \leq d} (X_{T}^x)_i  \geq c_0,$ $\bb P$-almost surely. 
%Its Fourier transform is given by $\frac{(\sin t)^2}{t^2}$.
For any $0 < a < b$ and $\ee \in (0, 1)$ such that $a - \ee >0$, %and $b > a c_0^{-1}$, 
%denote $\kappa_{\ee}(t) = \frac{1}{\ee} \kappa(\frac{t}{\ee})$ and $\Phi(t) =  \ds{1}_{[a,b]} * \kappa_{\ee}(t)$. 
choose $\Phi \in C^1_c(\bb R_+^*)$ such that 
\begin{align}\label{def function Phi}
 \ds{1}_{[a, b]}(t) \leq  \Phi(t) \leq   \ds{1}_{[a-\ee, b + \ee]}(t). 
\end{align}

\begin{lemma}\label{Lem compare strip}
Let $Y$ be a $\bb{S}^{d-1}_+$-valued random variable with the property that there is $c_0>0$ such that $\min_{1 \leq i \leq d} Y_i \geq c_0$ a.s.; and let $\Phi$ be as in \eqref{def function Phi} such that $a - \ee >0$ and $b > a c_0^{-1}$. 
Then there are constants $c_1, c_2 > 0$ (which depend on $a,b$) such that, for any $s \in \bb R_+$ and $x \in \bb S^{d-1}_+$, we have, $\bb P$-almost surely, 
\begin{align}\label{inequality compare strip.general}
c_1 \leq \frac{f_{\Phi} (e^{- s} Y)}{ L(e^{s})}  \leq c_2. 
\end{align}
In particular, it holds that for any $s \in \bb R_+$ and $x \in \bb S^{d-1}_+$, $\bb P$-almost surely, 
\begin{align}\label{inequality compare strip}
	c_1 \leq \frac{f_{\Phi} (e^{- s} X_{T}^x)}{ L(e^{s})}  \leq c_2. 
\end{align}
\end{lemma}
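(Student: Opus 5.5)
The plan is to sandwich $f_\Phi(e^{-s}Y)$ between $\nu$-masses of radial annuli at scale $e^{s}$ and then use the two-sided bound \eqref{eq:result.BPP} together with the slow variation of $L$. Write $y = e^{-s}Y$. Since $Y\in\bb S^{d-1}_+$ and $\min_i Y_i\ge c_0$, for every $z\in\bb R^d_+$ we have
\begin{align*}
c_0|z| \le \langle Y,z\rangle \le |z|.
\end{align*}
The lower bound is \eqref{Important-Inequality} with $|Y|=1$; the upper bound holds because every coordinate of $Y$ is at most $1$.

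For the upper bound on $f_\Phi$, use $\Phi\le \ds 1_{[a-\ee,b+\ee]}$. If $\langle y,z\rangle\in[a-\ee,b+\ee]$, then
\begin{align*}
|z|\in e^{s}\,[\,a-\ee,\ (b+\ee)/c_0\,].
\end{align*}
Hence $f_\Phi(e^{-s}Y)\le \nu(\{z: |z|\in e^s[a',b']\})$ for the fixed interval $[a',b']=[a-\ee,(b+\ee)/c_0]$. Let $[\alpha,\beta]$ with ratio $r=\beta/\alpha$ be the interval in \eqref{eq:result.BPP}. Cover $[a',b']$ by finitely many intervals of the form $\lambda_j r^{k}[\alpha,\beta]$, where the number $N$ of intervals depends only on $a,b,\ee,c_0$. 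Then \eqref{eq:result.BPP} gives
\begin{align*}
\nu(\{|z|\in e^{s}[a',b']\})\le c\sum_j L(\lambda_j e^{s}).
\end{align*}
The ratio $L(\lambda e^s)/L(e^s)$ is bounded uniformly over $s\ge0$ and $\lambda$ in a compact subset of $(0,\infty)$. For large $s$ this follows from the uniform convergence theorem for slowly varying functions. For $s$ in a compact range it follows because $L$ is bounded above and below on compacts, which we get from \eqref{eq:result.BPP} and the Radon property of $\nu$ (lower boundedness comes from the lower bound in \eqref{eq:result.BPP}). This yields $c_2$.

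For the lower bound, use $\Phi\ge \ds 1_{[a,b]}$. If $|z|\in e^{s}[a/c_0,b]$, then
\begin{align*}
\langle y,z\rangle\in[c_0 e^{-s}|z|,\ e^{-s}|z|]\subseteq[a,b].
\end{align*}
So $f_\Phi(e^{-s}Y)\ge\nu(\{|z|\in e^s[a/c_0,b]\})$, and this interval is nondegenerate precisely because $b>a c_0^{-1}$. Now reverse the covering: choose a sub-interval of $[a/c_0,b]$ of the form $\lambda[\alpha,\beta]$. Its existence needs $b c_0/a\ge r$. If the ratio is too small, use the fact that \eqref{eq:result.BPP} holds with some fixed interval, and that its lower bound persists on any subinterval after reducing the constant. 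This is the delicate point, discussed below. Then
\begin{align*}
f_\Phi(e^{-s}Y)\ge L(\lambda e^s)\ge c\,L(e^s),
\end{align*}
again by the uniform ratio bound.

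The main obstacle is this lower bound when the ratio $bc_0/a$ is smaller than the ratio of the BPP interval, since \eqref{eq:result.BPP} is only stated for one fixed $[\alpha,\beta]$. I would first check whether \cite{BPP21} provides the lower bound for arbitrary nondegenerate intervals, which I believe it does via its renewal argument. Failing that, I would allow the constants to depend on $a,b$, as the statement permits, and restrict to the regime where the BPP interval fits. The statement \eqref{inequality compare strip} then follows immediately by taking $Y=X_T^x$, because \eqref{integrability-X-tau-x} in Lemma \ref{Lem-integrability-S-tau} gives $\min_i (X_T^x)_i\ge c_0$ almost surely.
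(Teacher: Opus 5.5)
Your sandwich $c_0|z|\le\langle Y,z\rangle\le|z|$ matches the paper's argument. So does your upper bound: cover $R_{a-\ee,(b+\ee)/c_0}$ by finitely many dilates of the fixed annulus $R_{\alpha,\beta}$ from \eqref{eq:result.BPP}, then use slow variation. When $bc_0/a\ge\beta/\alpha$, your inclusion $\lambda R_{\alpha,\beta}\subset R_{a/c_0,b}$ gives the lower bound directly.

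The gap is the remaining case $1<bc_0/a<\beta/\alpha$, which the hypothesis $b>ac_0^{-1}$ allows. You assert that the lower bound in \eqref{eq:result.BPP} ``persists on any subinterval after reducing the constant''. This does not follow from \eqref{eq:result.BPP}, which only controls the total mass of $tR_{\alpha,\beta}$; that mass may sit in the part of the annulus missed by $tR_{a/c_0,b}$. For instance, take a measure whose radial projection is $\sum_{n\ge0}\delta_{e^n}$. It satisfies $1\le\nu(tR_{1,e^{3/2}})\le 2$ for all $t\ge1$, so \eqref{eq:result.BPP} holds with $L\equiv1$. Yet $\nu(tR_{1,e^{1/2}})=0$ whenever $\log t\in(n,n+\frac12)$. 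Nor should you expect \cite{BPP21} to give arbitrary intervals. The paper stresses that the interval there is fixed, and Theorem \ref{thm:main3} only guarantees a positive lower bound when $b/a>\varkappa d$. Your fallback of restricting to wide intervals therefore proves a weaker lemma than the one stated.

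The paper treats this case by the reverse covering $R_{\alpha,\beta}\subset\bigcup_{r\le k}s_rR_{a/c_0,b}$. This gives $\max_r\nu(s_re^sR_{a/c_0,b})\ge\frac1k\nu(e^sR_{\alpha,\beta})\ge\frac1kL(e^s)$. But it only controls the largest of the $k$ dilates, and the paper's next step, a lower bound on $\nu(e^sR_{a/c_0,b})$ itself, does not follow from it. The example above refutes that step with $s_r\in\{1,e^{1/2},e\}$ and $\log t=1.2$. So the paper does not supply the missing idea either.

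What rescues the downstream results is how the lemma is used. Proposition \ref{Prop L bounded}, and Corollary \ref{Cor compare strip} in Step 2 of the proof of Theorem \ref{thm:main2}, need the lower bound only for a single $\Phi$, whose interval may be chosen freely, and only as $s\to\infty$. Taking $bc_0/a\ge\beta/\alpha$, your inclusion argument is a complete and simpler proof of exactly that.

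You should also state the bounds only for $s\ge s_0$, for three reasons:
\begin{itemize}
\item Your step $f_\Phi(e^{-s}Y)\ge L(\lambda e^s)$ needs $\lambda e^s\ge1$ for \eqref{eq:result.BPP} to apply.
\item Lower boundedness of $L$ on compacts does not come from the lower inequality in \eqref{eq:result.BPP}, which bounds $L$ from above.
\item For small $s$ the lower bound can genuinely fail. If $|B|\ge1$ a.s., the argument leading to \eqref{eq:bound nu at zero} gives $\nu(\{|z|<1\})=0$, so $f_\Phi(Y)=0$ as soon as $b+\ee<c_0$.
\end{itemize}
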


\begin{proof}
By \eqref{def-function-f-Phi}, we have
\begin{align*}%\label{}
f_{\Phi} (e^{- s} x) = \int_{\bb R^d_+}  \Phi\big( \langle e^{- s}x, z \rangle \big) \nu(dz) 
\leq  \nu \left( \left\{  z \in \bb R^d_+:  \langle x, z \rangle \in  e^s [a - \ee,  b + \ee]  \right\} \right)
\end{align*}
and 
\begin{align*}%\label{}
f_{\Phi} (e^{- s} x) 
\geq  \nu \left( \left\{  z \in \bb R^d_+:  \langle x, z \rangle \in e^s [a, b]  \right\} \right). 
\end{align*}
Recall that by \eqref{integrability-X-tau-x}, we have $\min_{1 \leq i \leq d} (X_{T}^x)_i  \geq c_0,$ $\bb P$-almost surely. Hence, $Y=X_T$ is a legitimate choice.
Employing this bound and 
using \eqref{Important-Inequality}, we get that, $\bb P$-almost surely, 
\begin{align*}%\label{}
|z| \geq \langle Y, z \rangle \geq  \langle \bf 1, z \rangle  \min_{1 \leq i \leq d} (Y)_i 
=  |z| \min_{1 \leq i \leq d} (Y)_i \geq c_0 |z|. 
\end{align*}
Denote by $R_{a, b} = \{ x \in \bb R^d_+: |x| \in [a,b] \}$ the annulus between radii $a$ and $b$.
The result \eqref{eq:result.BPP} can be reformulated as follows: 
there exist a positive slowly varying function $L$ on $[1, \infty)$ and constants $a_0, b_0, c>0$ such that for any $t \geq 1$, 
%The mentioned result can be reformulated, denoting by $R_{a, b} = \{ x \in \bb R^d_+: |x| \in [a,b] \}$ the annulus between radii $a$ and $b$, 
\begin{align}\label{eq:result.BPP002}
L(t) \le \nu \left(  t \, R_{a_0, b_0}  \right) \le c L(t).
\end{align}
Consequently, we obtain 
\begin{align*}%\label{}
f_{\Phi} (e^{- s} Y)
& \leq \nu \left( \left\{  z \in \bb R^d_+:   e^s (a-\ee) \leq  \langle Y, z \rangle \leq e^s (b+\ee)  \right\} \right)  \notag\\
& \leq  \nu \left( \left\{  z \in \bb R^d_+:   e^s (a-\ee) \leq |z| \leq c_0^{-1} e^s (b+\ee)  \right\} \right)  \notag\\
& =  \nu \left( e^s R_{a-\ee, \, c_0^{-1} (b+\ee)} \right)
\end{align*}
and 
\begin{align*}%\label{}
f_{\Phi} (e^{- s} Y)
& \geq \nu \left( \left\{  z \in \bb R^d_+:   e^s a \leq  \langle Y, z \rangle \leq e^s b  \right\} \right)  \notag\\
& \geq  \nu \left( \left\{  z \in \bb R^d_+:   c_0^{-1} e^s a \leq |z| \leq  e^s b  \right\} \right) \notag\\
& =  \nu \left( e^s R_{c_0^{-1} a, \, b} \right). 
\end{align*}
Now we can find finitely many annuli of the form $s_r R_{a_0, b_0}$, $1 \leq r \leq k$, 
such that $$R_{a-\ee, \, c_0^{-1} (b+\ee)} \subset \bigcup_{r = 1}^k s_r R_{a_0, b_0}$$
for some nonnegative real numbers $s_1, \ldots, s_k$ (depending on $a_0$ and $b_0$). 
Then, using \eqref{eq:result.BPP002},  we obtain 
\begin{align*}%\label{}
\nu \left( e^s R_{a-\ee, \, c_0^{-1} (b+\ee)} \right) \leq \sum_{r = 1}^k  \nu \left( s_r e^s R_{a_0, \, b_0} \right) \leq c \sum_{r = 1}^k L(s_r e^s). 
\end{align*}
Since $L$ is slowly varying at infinity, this proves the upper bound in \eqref{inequality compare strip.general}. 
Similarly, to show the lower bound, we can also find finitely many annuli of the form $s_r R_{c_0^{-1} a, \, b}$, $1 \leq r \leq k$, 
such that $R_{a_0, \, b_0} \subset \cup_{r = 1}^k s_r R_{c_0^{-1} a, \, b}$
for some positive real numbers $s_1, \ldots, s_k$. 
Therefore, 
\begin{align*}%\label{}
\nu \left( e^s R_{a_0, \, b_0} \right) \leq \sum_{r = 1}^k  \nu \left( s_r e^s R_{c_0^{-1} a, \, b} \right), 
% \leq c \sum_{r = 1}^k  L(s_r e^s), 
\end{align*}
so that 
\begin{align*}%\label{}
\max_{1 \leq r \leq k} \nu \left( s_r e^s R_{c_0^{-1} a, \, b} \right) \geq  \frac{1}{k} \nu \left( e^s R_{a_0, \, b_0} \right). 
\end{align*}
This implies that 
\begin{align*}%\label{}
\nu \left( e^s R_{c_0^{-1} a, \, b} \right) \geq  \frac{1}{k}  \min_{1 \leq r \leq k} \nu \left( s_r^{-1} e^s R_{a_0, \, b_0} \right) \geq c_k \min_{1 \leq r \leq k} L(s_r^{-1} e^s). 
\end{align*}
This concludes the proof of \eqref{inequality compare strip.general}. 
\end{proof}

\begin{proposition}\label{Prop L bounded}
%Assume \ref{proximality}-\ref{Condition-Irreducibility}. Then 
The slowly varying function $L$ in \eqref{eq:result.BPP} is bounded. % and hence constant at infinity. 
\end{proposition}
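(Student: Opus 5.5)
We plan to combine the stopped Poisson identity of Lemma \ref{Lem-identity-f-Phi} with the two-sided comparison of Lemma \ref{Lem compare strip}. Fix $x \in (\bb S_+^{d-1})^\circ$, say $x = (1/d,\ldots,1/d)$. Take $\Phi$ as in \eqref{def function Phi}, with $a,b,\ee$ chosen so that $a-\ee>0$ and $b > a c_0^{-1}$. Here $c_0$ is the constant in \eqref{integrability-X-tau-x}; it also works for the deterministic direction $x$, since $\min_i x_i = 1/d \geq c_0$ after shrinking $c_0$. Identity \eqref{eq:stopped Poisson} gives, for $s \geq 0$,
\begin{align*}
\bb E f_{\Phi}\big(e^{S_T^x - s} X_T^x\big) = f_\Phi(e^{-s}x) + \bb E \sum_{k=0}^{T-1} \Psi_\Phi\big(e^{S_k^x-s}X_k^x\big).
\end{align*}
The proof of Lemma \ref{Lem-identity-f-Phi} showed that the last expectation is bounded in absolute value by $C\sum_{m}\sup_{r\in[e^m,e^{m+1})}\sup_{y}|\Psi_\Phi(ry)|=:K<\infty$, uniformly in $s$ and $x$. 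This uses Proposition \ref{Prop-renewal-thm} and the direct Riemann integrability from Lemma \ref{Lem-integrability}. By Lemma \ref{Lem compare strip}, $f_\Phi(e^{-s}x) \geq c_1 L(e^s)$. Hence
\begin{align*}
\bb E f_{\Phi}\big(e^{S_T^x - s} X_T^x\big) \geq c_1 L(e^s) - K .
\end{align*}

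The key point is the left-hand side. On the event $\{T<\infty\}$ we have $\log\|A_T^\top\cdots A_1^\top\|<0$, so $S_T^x<0$. We split according to the size of $S_T^x$. We aim to show that $\bb E f_\Phi(e^{S_T^x-s}X_T^x) \le (1-\eta) c_1 L(e^s) + o(L(e^s))$ for some fixed $\eta>0$, which would force $L(e^s) \le K/(\eta c_1)+o(L)$, i.e.\ $L$ bounded. On the event $\{S_T^x \le -u\}$ with $u$ large, the argument $e^{S_T^x-s}X_T^x$ has modulus about $e^{-(s+|S_T|)}$. By Lemma \ref{Lem compare strip} (applied at level $s+|S_T^x|$) together with Potter bounds, $f_\Phi$ there is at most $c_2 L(e^{s+|S_T|})\le c_2 L(e^s)e^{\gamma' |S_T|}$ for small $\gamma'$. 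This is for $s$ large; where $s+|S_T|$ is small, Lemma \ref{Lem-f-Phi-decay-rate-infinity} bounds it instead.

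The step we expect to be delicate is getting the factor $(1-\eta)$. The comparison constants $c_1\le c_2$ in Lemma \ref{Lem compare strip} are not equal, so simple two-sided bounds will not close. We would try to exploit slow variation more precisely. Since $S_T^x$ is a.s.\ finite, has exponential moments by \eqref{integrability-S-tau-x}, and $L(e^{s+v})/L(e^s)\to1$ uniformly on compacts, dominated convergence gives $\bb E f_\Phi(e^{S_T^x-s}X_T^x) \leq \bb E\, \nu(\ldots)$. These are annulus-type masses at level $e^{s}$ comparable to $L(e^s)$.

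Rather than comparing constants, we would instead argue by contradiction. Suppose $L(e^{s_j})\to\infty$ along a sequence. Normalize the measures $\nu(e^{s_j}\cdot)/L(e^{s_j})$; by \eqref{eq:result.BPP} these are locally bounded on $\bb R^d_+\setminus\{0\}$. Pass to a vague limit $\rho$, which is nonzero. Dividing the Poisson identity by $L(e^{s_j})$ kills the $K$ term. In the limit, $\rho$ satisfies $\int\Phi(\langle x,z\rangle)\rho(dz)=\bb E\int \Phi(e^{S_T^x}\langle X_T^x,z\rangle)\rho(dz)$, so $\rho$ is invariant under the homogeneous (linear) dynamics. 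Slow variation moreover makes $\rho$ dilation invariant. The same limiting procedure applied to the invariance equation \eqref{def invariant measure} shows $\rho$ is $\mu$-invariant for $z\mapsto Az$, since the $B$-shift vanishes after rescaling. We would then derive a contradiction by using the strict gain coming from $B$. For this we would use Lemma \ref{Lem-formula-Psi-Phi}: the integrated defect $\int\Psi_\Phi(e^{-s}x)\,ds=0$ from Lemma \ref{Lem-integrability002}, combined with $\bb P(B\neq0)=1$, would have to produce a positive contribution of order $L(e^{s})$. This is the main obstacle, and how exactly to close it is the part we are least sure of.
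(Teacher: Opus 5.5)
Your proposal has a genuine gap, and it is at the step you flag. The identity \eqref{eq:stopped Poisson} at a \emph{fixed} scale $s$ cannot bound $L$. Both $f_\Phi(e^{-s}x)$ and $\bb E f_\Phi(e^{S_T^x-s}X_T^x)$ are of order $L(e^s)$, and an $O(1)$ difference between them is fully compatible with an unbounded slowly varying $L$. After division by $L(e^s)$ it only says that rescaled limits are harmonic for the ladder dynamics. So nothing produces your factor $(1-\eta)$, and the inequality you aim for is essentially the conclusion itself.

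The contradiction route fails for the same reason. Dividing by $L(e^{s_j})\to\infty$ removes the $O(1)$ term, which is the only place where $B$ enters. What survives is invariance under $z\mapsto Az$ and under dilations, which is not contradictory: compare the limit measures $\eta$ in the proof of Theorem \ref{thm:main2}, whose projections are nonzero multiples of $ds/s$. Finally, you have the role of the cancellation $\int_{\bb R}\Psi_\Phi(e^{-s}x)\,ds=0$ reversed. It does not create a positive contribution of order $L$; it is what makes the accumulated defect \emph{bounded}.

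The missing idea is to integrate the stopped Poisson identity over scales. First, average the starting direction over an invariant probability $\rho$ of the ladder-direction chain $(X_{T_n}^x)_{n\ge1}$. It exists by the weak Feller property on the compact sphere and is supported in the interior by \eqref{integrability-X-tau-x}. Stationarity then gives $\int f_\Phi(e^{-s}x)\rho(dx)=\int\bb E f_\Phi(e^{-s}X_T^x)\rho(dx)$. With your fixed $x=(1/d,\ldots,1/d)$, the mismatch $f_\Phi(e^{-s}x)-\bb E f_\Phi(e^{-s}X_T^x)$ is only known to be $O(L(e^s))$ and would not telescope.

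Next, integrate in $s$ over $[-N,t]$. The left-hand side telescopes to $\bb E\int\int_t^{t-S_T^x}f_\Phi(e^{-s}X_T^x)\,ds\,\rho(dx)$, minus a boundary term at $-N$. That boundary term vanishes as $N\to\infty$ by Lemma \ref{Lem-f-Phi-decay-rate-infinity} and \eqref{integrability-S-tau-x}. By Lemma \ref{Lem compare strip}, slow variation and Fatou, the main term is asymptotically at least $c\,L(e^t)\int\bb E(-S_T^x)\rho(dx)$. So the positive contribution of order $L$ comes from the ladder height $-S_T^x>0$, not from $B$.

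The right-hand side becomes $\int\bb E\sum_{k<T}\overline\Psi_\Phi(X_k^x,t-S_k^x)\rho(dx)$. This is bounded uniformly in $t$ by Proposition \ref{Prop-renewal-thm}, because $\overline\Psi_\Phi$ is directly Riemann integrable (Lemma \ref{Lem-integrability002}). Here the cancellation is indispensable: your pointwise bound $K$ integrated over $[-N,t]$ only gives $K(t+N)$, which would yield merely $L(e^t)=O(t)$. Comparing the two sides gives $\limsup_{t\to\infty}L(e^t)<\infty$.
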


\begin{proof}%[Proof of Theorem \ref{thm:main2}]
Denote $T_{1} = T$ (see \eqref{def stopping T}) and define recursively the ladder times:
\begin{align*}%\label{}
T_{n} = \inf \{ k > T_{n-1}: \log \|A_k^\top \cdots A_{T_{n - 1} + 1}^\top \|  < 0 \}. 
\end{align*}
%By \cite[Lemma 2, equation (3.14)]{Kes74}, 
\textbf{Step 1}. Observe that $(X_{T_{n}}^x)_{n \geq 1}$ is a Markov chain with respect to the filtration $\scr F_{T_n}$ on the compact space $\bb S_+^{d-1}$, 
with starting point $x$ and transition operator given as follows: for any bounded measurable function $f$ on $\bb S_+^{d-1}$, 
\begin{align*}%\label{}
Q f(x) = \bb E f \Big( (A_T^\top \cdots A_1^\top) \cdot x \Big) 
= \sum_{k = 1}^\infty \bb E\left(  f \Big( (A_k^\top \cdots A_1^\top ) \cdot x \Big) \ds 1_{\{T = k\}} \right). 
\end{align*} 
From the second representation of $Q$ together with Lemma \ref{Lem-exit-time-estimate}, we see that $Q$ satisfies the weak Feller property: 
it maps from continuous functions to continuous functions on $\bb S_+^{d-1}$. 
Therefore, by Theorem 12.0.1 (i) in \cite{MT93}, %{\color{magenta}by Meyn and Tweedie, 1993, Theorem 12.0.1 (i), }
there exists an invariant probability measure $\rho$ for the Markov chain $(X_{T_{n}}^x)_{n \geq 1}$
satisfying that, for any Borel measurable set $A \subset \bb S_+^{d-1}$, 
\begin{align*}%\label{stationarity rho 001}
\int_{\bb S_+^{d-1}}  \bb P (X_{T}^x \in A) \rho(dx) = \rho(A). 
\end{align*}
In particular,
\begin{align}\label{identity measure rho}
\int_{\bb S_+^{d-1}}    f_{\Phi} (e^{-s} x)   \rho(dx)
=   \int_{\bb S_+^{d-1}}  \bb E   f_{\Phi} (e^{-s} X_{T}^x)  \rho(dx). 
\end{align}
Note that it follows from  \eqref{integrability-X-tau-x} that $\mathrm{supp}(\rho) \subset (\bb S^{d-1}_+)^\circ$. 
Integrating \eqref{eq:stopped Poisson} of Lemma \ref{Lem-identity-f-Phi} with respect to $\rho$
and using the identity \eqref{identity measure rho}, we have that, for any $s \in \bb R$,
%\begin{align*}%\label{}
%\bb E f_{\Phi} \left( e^{S_{T}^x - s} X_{T}^x \right) 
%- f_{\Phi} (e^{-s} x) = \bb E \sum_{k = 0}^{T -1} \Psi_{\Phi} \left( e^{S_k^x - s} X_k^x \right). 
%\end{align*}
%Integrating both sides with respect to the invariant measure $\rho$, we obtain
\begin{align}\label{pf-main-equation-001}
& \int_{\bb S_+^{d-1}}  \bb E    f_{\Phi} \left( e^{S_{T}^x - s} X_{T}^x \right) \rho(dx)
  -  \int_{\bb S_+^{d-1}}  \bb E   f_{\Phi} (e^{-s} X_{T}^x)  \rho(dx)  \notag\\
& = \int_{\bb S_+^{d-1}} \bb E    \sum_{k = 0}^{T -1} \Psi_{\Phi} \left( e^{S_k^x -s} X_k^x \right) \rho(dx). 
\end{align}
Integrating \eqref{pf-main-equation-001} with respect to $s$ over $[-N, t]$ and using Fubini's theorem, we get
\begin{align}\label{Pf-equation-001}
&  \int_{\bb S_+^{d-1}}   \int_{-N}^t  \bb E    f_{\Phi} \left( e^{S_{T}^x - s} X_{T}^x \right)  ds \rho(dx)
  -  \int_{\bb S_+^{d-1}}  \int_{-N}^t  \bb E   f_{\Phi} (e^{-s} X_{T}^x)  ds \rho(dx) \notag\\
& = \int_{\bb S_+^{d-1}} \int_{-N}^t  \bb E    \sum_{k = 0}^{T -1} \Psi_{\Phi} \left( e^{S_k^x -s} X_k^x \right)  ds \rho(dx). 
\end{align}
Considering the left-hand side of \eqref{Pf-equation-001}, we apply first a change of variable and Fubini's theorem to obtain
\begin{align*}%\label{}
\int_{\bb S_+^{d-1}}  \int_{-N}^t    \bb E   f_{\Phi}  \left( e^{S_{T}^x - s} X_{T}^x \right)  ds  \rho(dx) 
 =  \bb E  \int_{\bb S_+^{d-1}}  \int_{ -N - S_{T}^x }^{t - S_{T}^x }    f_{\Phi}  \left( e^{- s} X_{T}^x \right)  ds \rho(dx). 
\end{align*}
Hence,
\begin{align*}%\label{}
\text{L.H.S. of \eqref{Pf-equation-001}} 
% \int_{\bb S_+^{d-1}} \int_{-N}^t    \bb E    f_{\Phi} \left( e^{S_{T}^x - s} X_{T}^x \right)  ds  \rho(dx)
%  - \int_{\bb S_+^{d-1}}   \int_{-N}^t  f_{\Phi} (e^{-s} x) ds  \rho(dx)   \notag\\
& =  \bb E  \int_{\bb S_+^{d-1}}  \int_{ -N - S_{T}^x }^{t - S_{T}^x }    f_{\Phi}  \left( e^{- s} X_{T}^x \right)  ds  \rho(dx)
  -  \bb E  \int_{\bb S_+^{d-1}}  \int_{-N}^t  f_{\Phi} (e^{-s} X_{T}^x)  ds \rho(dx)  \notag\\
& = \bb E  \int_{\bb S_+^{d-1}}  \int_{t}^{t - S_{T}^x}   f_{\Phi}  \left( e^{- s} X_{T}^x \right)  ds \rho(dx)
  -  \bb E  \int_{\bb S_+^{d-1}}  \int_{-N}^{-N - S_{T}^x}    f_{\Phi}  \left( e^{- s} X_{T}^x \right)  ds \rho(dx). 
\end{align*}
Substituting this into \eqref{Pf-equation-001} and rearranging terms, we arrive at
\begin{align}\label{Pf-equation-change-of-variable}
& \bb E  \int_{\bb S_+^{d-1}}  \int_{t}^{t - S_{T}^x}    f_{\Phi}  \left( e^{- s} X_{T}^x \right)  ds \rho(dx) \notag\\
& =  \bb E  \int_{\bb S_+^{d-1}}  \int_{-N}^{-N - S_{T}^x}    f_{\Phi}  \left( e^{- s} X_{T}^x \right)  ds  \rho(dx)
 +   \int_{\bb S_+^{d-1}}  \int_{-N}^t  \bb E    \sum_{k = 0}^{T -1} \Psi_{\Phi} \left( e^{S_k^x -s} X_k^x \right)  ds \rho(dx). 
\end{align}

\textbf{Step 2}. In this step, we are going to show that the right-hand side of \eqref{Pf-equation-change-of-variable} remains bounded when taking limits $N\to \infty$ and then $t \to \infty$.
We start with the first term on the right-hand side of \eqref{Pf-equation-change-of-variable}. 
By Lemma \ref{Lem-f-Phi-decay-rate-infinity}, 
we have 
%$|f_\Phi(y)| \le \frac{c}{ (\min_{1 \leq i \leq d} \overline y_i)^{\gamma} } |y|^{-\gamma}$, so that, 
for any $s \in \bb R$ and $x\in (\bb S^{d-1}_+)^\circ$
$
|f_{\Phi} (e^{- s} x)| \leq   (\min_{1 \leq i \leq d}  x_i)^{-\gamma}c e^{\gamma s}. % \leq c' e^{\gamma s},
$
%where in the last inequality we used $x \in B \subset \mathrm{int}(\bb S^{d-1}_+)$. 
Hence, by Lemma \ref{Lem-integrability-S-tau}, we see that 
\begin{align*}%\label{}
& \bb E \left| \int_{\bb S_+^{d-1}}  \int_{-N}^{-N - S_{T}^x}    f_{\Phi}  \left( e^{- s} X_{T}^x \right) ds \rho(dx)   \right|  \notag\\
& \leq c  \bb E \left| \int_{\bb S_+^{d-1}}  \int_{-N}^{-N - S_{T}^x}   
  \max_{1 \leq i \leq d}  \left( \langle e_i,  X_{T}^x \rangle^{-\gamma} \right)  e^{\gamma s}  ds  \rho(dx)  \right|  \notag\\
& = \frac{c}{\gamma}  e^{- \gamma N}  \bb E 
\int_{\bb S_+^{d-1}}  \max_{1 \leq i \leq d}  
\left( \langle e_i,  X_{T}^x \rangle^{-\gamma} \right) \left( e^{ - \gamma S_{T}^x} - 1 \right) \rho(dx)  \notag\\ 
& \leq c_1 e^{- \gamma N}. 
\end{align*}
It follows that 
\begin{align}\label{Pf-bound-N-S-tau-001}
\lim_{N \to \infty}  \bb E \left|  \int_{\bb S_+^{d-1}}  \int_{-N}^{-N - S_{T}^x}    f_{\Phi}  \left( e^{- s} X_{T}^x \right)  ds  \rho(dx) \right|  = 0. 
\end{align}
For the second term on the right-hand side of \eqref{Pf-equation-change-of-variable}, 
letting $N \to \infty$ and applying the renewal theorem (Proposition \ref{Prop-renewal-thm}), 
we get
\begin{align}\label{Pf-bound-N-S-tau-002}
\limsup_{t \to \infty}   \int_{\bb S_+^{d-1}}  \int_{-\infty}^t  
\bb E    \sum_{k = 0}^{T -1} \Psi_{\Phi} \left( e^{S_k^x -s} X_k^x \right)  ds \rho(dx)
 \leq c_2 < \infty, 
\end{align}
using that, by Lemma \ref{Lem-integrability002}, 
the function $(x, t) \mapsto \int_{-\infty}^t \Psi_{\Phi} (e^{-s} x) ds$ is directly Riemann integrable 
on $(\bb S_+^{d-1})^\circ \times \bb R$. 

\textbf{Step 3}. 
For brevity, denote 
\begin{align*}%\label{}
I(t) = \bb E  \int_{\bb S_+^{d-1}}  \int_{t}^{t - S_{T}^x}    f_{\Phi}  \left( e^{- s} X_{T}^x \right)  ds \rho(dx). 
\end{align*}
Taking into account  \eqref{Pf-equation-change-of-variable}, 
\eqref{Pf-bound-N-S-tau-001} and \eqref{Pf-bound-N-S-tau-002}, we have obtained that 
\begin{align}\label{bounded funct I(t)}
\limsup_{t \to \infty}  I(t) 
%\limsup_{t \to \infty}  \bb E  \int_{\bb S_+^{d-1}}  \int_{t}^{t - S_{T}^x}    f_{\Phi}  \left( e^{- s} X_{T}^x \right)  ds \rho(dx) 
 \leq c_2 < \infty. 
\end{align}
By a change of variable, we have 
\begin{align*}%\label{}
\int_{t}^{t - S_{T}^x}  f_{\Phi} (e^{- s} X_{T}^x) ds 
& = \int_{0}^{ - S_{T}^x }  f_{\Phi} \left( e^{- (t+s)} X_{T}^x \right) ds  \notag\\
& = L(e^{t}) \int_{0}^{ - S_{T}^x }  \frac{f_{\Phi} (e^{- (t+s)} X_{T}^x)}{ L(e^{t+s}) }  \frac{ L(e^{t+s}) }{ L(e^{t}) } ds.
\end{align*}
Combining the last two displayed formulas and using Lemma \ref{Lem compare strip}, 
we obtain that there exists a constant $c_3 \in (0, \infty)$ such that 
\begin{align*}%\label{}
\frac{I(t)}{ L(e^{t}) }
& =  \bb E  \int_{\bb S_+^{d-1}}   \int_{0}^{ - S_{T}^x } 
  \frac{f_{\Phi} (e^{- (t+s)} X_{T}^x)}{ L(e^{t+s}) }  \frac{ L(e^{t+s}) }{ L(e^{t}) } ds   \rho(dx)  \notag\\
& \geq  c_3  \bb E  \int_{\bb S_+^{d-1}}   \int_{0}^{ - S_{T}^x }   \frac{ L(e^{t+s}) }{ L(e^{t}) } ds   \rho(dx). 
\end{align*}
By Fatou's lemma and the fact that $L$ is a slowly varying function at infinity, it follows that 
\begin{align}\label{pf apply Fatou L(t)}
\liminf_{t \to \infty}  \frac{I(t)}{ L(e^{t}) } 
\geq  c_3   \int_{\bb S_+^{d-1}}  \bb E(- S_{T}^x)  \rho(dx). 
\end{align}
Since $S_T^x = \log |A_T^\top  \cdots A_1^\top x| \leq \log \| A_T^\top  \cdots A_1^\top \| < 0$ for any $x \in \bb S_+^{d-1}$, 
we get $\bb E(- S_{T}^x) > 0$, so that 
%$- S_{\tau_x}^x >0$ and $|Ax| \leq \|A\|$, we get $- S_{T}^x > 0$.  
 $\int_{\bb S_+^{d-1}} \bb E(- S_{T}^x) \rho(dx) > 0$. 
Therefore, from \eqref{bounded funct I(t)} and \eqref{pf apply Fatou L(t)}, we can conclude that 
%from the previous inequality that, for $\rho$-almost surely $x \in \bb S_+^{d-1}$, 
\begin{align*}%\label{}
\limsup_{t \to \infty} L(e^{t}) \leq c < \infty, 
\end{align*}
as desired. 
%By \eqref{eq:result.BPP002}, we have $\liminf_{t \to \infty} L(e^{t}) \geq c' \limsup_{t \to \infty} L(e^{t})$ for some $c' >0$. 
%This forces the slowly varying function $L$ to be bounded. 
\end{proof}

\begin{corollary}\label{Cor compare strip}
Let $Y$ be a $\bb{S}^{d-1}_+$-valued random variable with the property that there is $c_0>0$ such that $\min_{1 \leq i \leq d} Y_i \geq c_0$ a.s.; and let $\Phi$ be as in \eqref{def function Phi} such that $a - \ee >0$ and $b > a c_0^{-1}$. Then there are constants $c_1, c_2 > 0$ (which depend on $a,b$) such that, for any $s \in \bb R$ and $x \in \bb S^{d-1}_+$, we have, $\bb P$-almost surely, 
\begin{align*}
c_1 \leq {f_{\Phi} (e^{- s} Y)}  \leq c_2. 
\end{align*}
\end{corollary}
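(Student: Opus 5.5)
The plan is to combine Lemma \ref{Lem compare strip} with Proposition \ref{Prop L bounded}, and to supply a positive lower bound for the slowly varying function $L$. I treat $s \ge 0$ and $s<0$ separately.

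\textbf{Case $s \ge 0$, upper bound.} By \eqref{inequality compare strip.general}, $f_\Phi(e^{-s}Y) \le c_2 L(e^s)$ almost surely. Proposition \ref{Prop L bounded} gives $\sup_{t \ge 1} L(t) < \infty$. The upper bound follows at once.

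\textbf{Case $s \ge 0$, lower bound.} Here \eqref{inequality compare strip.general} gives $f_\Phi(e^{-s}Y) \ge c_1 L(e^s)$, so it suffices to show $\inf_{t\ge1}L(t)>0$. On compact ranges of $t$ this holds because $L$ is positive and, being slowly varying, locally bounded away from zero. It therefore remains to show $\liminf_{t\to\infty}L(t)>0$.

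For this I would rerun Step 3 of the proof of Proposition \ref{Prop L bounded} in the reverse direction. Using the upper half of \eqref{inequality compare strip} instead of the lower half, Fatou's lemma is replaced by dominated convergence, with domination provided by $\bb E(-S_T^x)<\infty$ from Lemma \ref{Lem-integrability-S-tau}. This gives
\[
I(t)\le c_4 L(e^t)\int \bb E(-S_T^x)\,\rho(dx)\,(1+o(1)).
\]
So it is enough to prove $\liminf_{t\to\infty} I(t)>0$. By \eqref{Pf-equation-change-of-variable} and \eqref{Pf-bound-N-S-tau-001}, letting $N\to\infty$,
\[
I(t)=\int \bb E\sum_{k<T}\overline\Psi_\Phi\big(X_k^x,t-S_k^x\big)\rho(dx).
\]
I would attack the positivity of the right-hand side in two ways.
\begin{itemize}
\item Alternatively, bound $I(t)$ from below directly. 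Use that $f_\Phi\ge0$ and that $-S_T^x$ exceeds a fixed positive level with positive probability, which follows from \ref{Condition-moments-A-B} since $\bb P(\iota(A)\ge1+\delta)>0$. Then show that the masses $\nu(\{\langle X_T^x,z\rangle\in e^{t+s}[a,b]\})$ cannot all tend to zero. Such a collapse would contradict $\nu(\bb R^d_+)=\infty$ combined with the two-sided comparison \eqref{eq:result.BPP} between neighbouring annuli.
\item Failing that, use the key-renewal structure of Proposition \ref{Prop-renewal-thm} to pass to the limit in $I(t)$, and identify the limit as a positive multiple of $\int\!\!\int_{-\infty}^{\infty}\overline\Psi_\Phi\,ds$.
\end{itemize}
I expect this positivity step to be the main obstacle.

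\textbf{Case $s<0$.} Here $e^{-s}Y$ has $|e^{-s}Y|\ge1$ and $\min_i Y_i \ge c_0$.
\begin{itemize}
\item Upper bound: the integrand is supported in $\{z : |z|\le c_0^{-1}e^{s}(b+\ee)\}\subset\{|z|\le c_0^{-1}(b+\ee)\}$, a fixed compact set. Since $\nu$ is Radon, $f_\Phi(e^{-s}Y)$ is bounded by the $\nu$-mass of this set, uniformly in $s<0$.
\item Lower bound: the integrand dominates the indicator of $\{c_0^{-1}e^{s}a\le|z|\le e^{s}b\}$. One needs positive $\nu$-mass of these small shells uniformly in $s<0$. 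I would obtain this from the invariance relation \eqref{def invariant measure}: pushing mass from a fixed annulus of positive measure through $z\mapsto Az+B$ on the event $\{\iota(A)\ge1+\delta\}$ produces the required mass. I would check this step carefully, since it is exactly where the behaviour of $\nu$ near the origin enters.
\end{itemize}

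Combining the two regimes gives constants $c_1,c_2$ depending only on $a,b,\ee,c_0$, as claimed.
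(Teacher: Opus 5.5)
\textbf{Verdict: there are genuine gaps, both on the lower bound.} Your upper bounds are fine. For $s\ge0$ they follow the paper's argument exactly (Lemma~\ref{Lem compare strip} plus Proposition~\ref{Prop L bounded}). For $s<0$ the Radon property of $\nu$ suffices, since $\Phi\le1$.

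\textbf{The lower bound for $s<0$ is false.} Since $\scal{Y}{z}\ge c_0|z|$, we have $f_\Phi(e^{-s}Y)\le\nu\big(\bb B_+(0,c_0^{-1}e^{s}(b+\ee))\big)$. By \eqref{nu on small ball radius t} this is $O(e^{s\delta/2})$, so it tends to $0$ as $s\to-\infty$. If $|B|\ge\beta>0$ a.s., it even vanishes for very negative $s$. So no invariance argument can produce uniformly positive mass in small shells. Moreover, on $\{\iota(A)\ge1+\delta\}$ the map $z\mapsto Az+B$ pushes mass away from the origin, not toward it. The lower bound can only hold for $s\ge0$, as in Lemma~\ref{Lem compare strip}. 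That is also all the proof of Theorem~\ref{thm:main2} needs, since there $s\to\infty$.

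\textbf{The lower bound for $s\ge0$ is left unproved.} You are right that what is needed is $\inf_{t\ge1}L(t)>0$, not mere pointwise positivity. The paper does not derive this from the Poisson identity; it takes the lower side from \cite[Theorem 1.1]{BPP21}. Your attempt to prove $\liminf_{t\to\infty}L(t)>0$ yourself stops exactly at the crux.

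Your reduction to $\liminf_{t\to\infty}I(t)>0$ is sound. The domination step, however, needs more than $\bb E(-S_T^x)<\infty$: use Potter's bound $L(e^{t+s})/L(e^t)\le 2e^{\epsilon s}$ for $s\ge0$ and $t$ large, together with Lemma~\ref{Lem-integrability-S-tau}. Neither of your two routes then establishes $\liminf I(t)>0$.
\begin{itemize}
\item Your first route fails. $\nu(\bb R^d_+)=\infty$ is compatible with annulus masses tending to $0$. For example, $L(t)=1/\log(e+t)$ gives masses of order $1/n$ on the $n$-th geometric annulus, and their sum diverges. Also, \eqref{eq:result.BPP} only compares annuli with $L$ itself, so it yields no contradiction.
\item Your second route needs a Blackwell-type limit, or at least a lower bound, for the killed occupation measure $\bb E\sum_{k<T}\ds 1_{[t,t+1]}(S_k^x)$ as $t\to\infty$. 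Proposition~\ref{Prop-renewal-thm} gives only an upper bound.
\end{itemize}

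Positivity of the integrand is not the obstacle. Let $G(w)=\int_{-\infty}^{w}\Phi(e^{-r})\,dr$, which is nondecreasing. Then $\overline\Psi_\Phi(x,t)=\bb E\int_{\bb R^d_+}\big[G(t-\log\scal{x}{Az})-G(t-\log\scal{x}{Az+B})\big]\,\nu(dz)\ge0$, because $\scal{x}{Az+B}\ge\scal{x}{Az}$. What is missing is a uniform-in-$t$ lower renewal estimate for the walk killed at $T$. That ingredient is neither in the paper nor in your proposal.
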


\begin{proof}
This is a direct consequence of Lemma \ref{Lem compare strip}, using that $L$ is bounded due to Proposition \ref{Prop L bounded},
and that $L$ is positive on $\bb R_+$ due to \cite[Theorem 1.1]{BPP21}. 
\end{proof}

\subsection{Proof of Theorems \ref{thm:main2} and \ref{thm:main3}}
We now pass from quantitative bounds to the asymptotic properties of the invariant measure. 
The preceding estimates imply vague relative compactness of the family of dilated measures. 
We shall show that every nontrivial accumulation point 
has multiplicatively invariant one-dimensional projections along directions in $\supp \pi$.

%Given a measure $\eta$ on $\bb R^d_+ \setminus\{0\}$, we write $\overline{\eta}$ 
%for its push-forward measure on $(0,\infty)$ under the map $z \mapsto |z|$. 
%Then, the following change of variable formula holds: for any continuous functions $f$ with compact support on $(0,\infty)$, 
%\begin{align}\label{def overline nu s}
% \int_0^{\infty} f(s) \overline{\eta}(ds) = \int_{\bb R^d_+ \setminus\{0\}} f( |z|) \eta(dz).
%\end{align}
%With this notation \eqref{eq:result.BPP002} becomes: for any $s > 0$, 
%\begin{align*}
%L(e^{s}) \le \nu_s \left( R_{a, b}  \right) = \overline \nu_s([a, b]) \le c L(e^{s}). 
%\end{align*}
%We write $\lambda(ds) = \frac{ds}{s}$ for the Haar measure on the multiplicative group $(\bb R_+^*, \cdot)$.

\begin{proof}[Proof of Theorem \ref{thm:main2}]
Given a compact set $K \subset \bb R_+^d \setminus\{0\}$, we can cover it by a finite union of sets of the form 
$$ K \subset \bigcup_{j=1}^m  r_j  R_{a, b},$$
where $(r_j)_{1\le j \le m}$ are suitable constants, 
$R_{a, b} = \{ x \in \bb R^d_+: |x| \in [a,b] \}$ and
 $[a, b]$ is the set from \eqref{eq:result.BPP}. 
%	Recall from \eqref{def nu s} and \eqref{def overline nu s} the definitions of $\nu_s$ and $\overline \nu_s$, respectively. 
Define a family $(\nu_s)_{s >0}$ of dilations of $\nu$ by
\begin{align*}%\label{def nu s}
	\int_{\bb R^d_+ \setminus\{0\}} f(y) \nu_s(dy) = \int_{\bb R^d_+ \setminus\{0\}} f(e^{-s}z) \nu(dz).
\end{align*}
	By Proposition \ref{Prop L bounded}, it follows that there is $C<\infty$ such that for all $s>0$, 
	$$\nu_s(K)=\nu(e^s K) \le C.$$
	Hence, the family $(\nu_s)_{s >0}$ is vaguely relatively compact. Fix an arbitrary sequence $s_k \to \infty$, then there is a subsequence $s_n \to \infty$ of $(s_k)$, such that $\nu_{s_n}$ converges vaguely to a Radon measure $\eta$ on $\bb R_+^d \setminus\{0\}$.

\textbf{Step 1}. We are going to prove invariance properties of $\eta$. 
Consider an arbitrary nonnegative function $\Phi \in C^1_c(\bb R_+^*)$ and define for $x \in \Rdpint$, 
 $$F_\Phi(x) = \int_{\bb R^d_+} \Phi \big( \langle x, z \rangle \big) \eta(dz).$$
 Recalling the definition of 
 $$ f_\Phi(x)= \int_{\bb R^d_+} \Phi\big( \langle x, z \rangle \big) \nu(dz),$$
 we have 
 $$ F_\Phi(x)= \lim_{n \to \infty} f_\Phi(e^{-s_n}x),$$
 using the definition of vague convergence and that for each $x$, 
 the mapping $z \mapsto \Phi(\scal{x}{z})$ is continuous and has compact support  
 whenever all entries of $x$ are strictly positive (since then $\scal{x}{z}$ and $|z|$ are comparable).
	
By Fatou's lemma and the Poisson equation \eqref{Poisson-Psi-Phi}, we have
\begin{align*}
\bb E \Big[ F_\Phi(e^{S_1^x}X_1^x) \Big]  
& = 	\bb E \Big[ \liminf_{n \to \infty} f_\Phi(e^{S_1^x-s_n}X_1^x) \Big] \\
& \leq  \liminf_{n \to \infty} \bb E \Big[ f_\Phi(e^{S_1^x-s_n}X_1^x) \Big] \\
& = \liminf_{n \to \infty} \Big( \Psi_\Phi(e^{-s_n}x) + f_\Phi(e^{-s_n}x) \Big) \\
& = F_\Phi(x),
\end{align*}
where we have used  Lemma \ref{Lem-integrability} showing that $\lim_{|x| \to 0} \Psi_\Phi(x)=0$.
This proves that 
$$H_n(x)= F_\Phi \big( e^{S_n^x}X_n^x \big)$$
is a nonnegative supermartingale. 
Hence there is a nonnegative random variable $H_\infty(x)$ 
with $\bb E [H_\infty(x)]\le F_\Phi(x)$ and we have the $\bb P$-almost sure convergence
\begin{equation}\label{eq:conv.h.H}
\lim_{n \to \infty} H_n(x)=\lim_{n \to \infty} F_\Phi(e^{S_n^x}X_n^x)=H_\infty(x). 
\end{equation}
Recall that $\pi$ is the unique stationary probability measure for the Markov chain $X_n$ on $\bb S^{d-1}_+$
 and that $\supp(\pi)\subset (\bb S^{d-1}_+)^\circ$ by Lemma \ref{lem equiv Kesten}. 
It is proved in \cite[Corollary 3.3]{KM15} that 
there is $x \in (\bb S^{d-1}_+)^\circ$ such that for any open set $E \subset (\bb S^{d-1}_+)^\circ$ with $\pi(E)>0$ 
and every nonempty open set $G \subset \bb R$, 
$$ \bb P \big( (X_n^x, S_n^x) \in E \times G \text{ infinitely often} \big)=1.$$
Together with the $\bb P$-almost sure convergence in \eqref{eq:conv.h.H} and the continuity of $F_\Phi$, 
this implies that there is a constant $c_0$ such that $F_\Phi(ru)=c_0$ for all $u \in \mathrm{supp}(\pi)$ and  $r>0$.

Hence, we obtain that for any compactly supported $\Phi$, $x \in \supp \pi$ and all $r>0$,
$$\int_{\bb R^d_+} \Phi(\langle x, z \rangle) \eta(dz) = \int_{\bb R^d_+} \Phi(r \langle x, z \rangle) \eta(dz)$$
showing that the pushforward measure $\overline{\eta}_x$ of $\eta$ under $z \mapsto \langle x, z \rangle$ is invariant under multiplication, 
hence is equal to a multiple of the Haar measure $\lambda(ds)= \frac{ds}{s}$ on the multiplicative group $(\bb R_+^*, \cdot)$. That is,
$$\int_0^\infty \Phi(s) \overline{\eta}_x(ds)
 = \int_{\bb R_+^d \setminus\{0\}} \Phi(\langle x, z \rangle) \eta(dz) = C' \int_0^\infty \Phi(s) \lambda(ds) = C' \int_0^\infty \Phi(s) \frac{ds}{s}.$$
Hence, we have proved that, 
for any sequence $s_k \to \infty$, 
there is a subsequence $(s_n)_{n \geq 1}$ and $C'>0$ (depending on this subsequence) such that for any $x \in \supp \pi$
and any nonnegative function $\Phi \in C^1_c(\bb R_+^*)$,  
\begin{align}\label{eq:limit subsequence}
\lim_{n \to \infty} f_{\Phi}(e^{-s_n} x) = C' \int_0^\infty \Phi(s) \frac{ds}{s}. 
\end{align}

\textbf{Step 2}. 
We now want to bound the possible range of $C'$. 
Therefore, we evaluate \eqref{eq:limit subsequence} for a function $\Phi$ given by \eqref{def function Phi}. 
Then, by Corollary \ref{Cor compare strip}, for any $x \in \supp \pi$ (which is contained in the interior of $\bb S^{d-1}_+$), it holds that there are $0<c_1 \leq  c_2<\infty$ such that  $c_1\leq f_\Phi(e^{-s}x)\leq c_2$. Writing $c_3=\int_0^\infty \Phi(s) \lambda(ds)$,  we have that $c_1/c_3 \leq C' \leq c_2/c_3$. 

Therefore, for any sequence $s_k \to \infty$, there is a subsequence $(s_n)_{n \geq 1}$ such that for any $x \in \supp \pi$, 
and any nonnegative function $\Phi \in C^1_c(\bb R_+^*)$,  
\begin{align*}%\label{}
\frac{c_1}{c_3} \int_0^\infty \Phi(s) \frac{ds}{s} 
\leq \liminf_{n \to \infty} f_{\Phi}(e^{-s_n} x) \leq \limsup_{n \to \infty} f_{\Phi}(e^{-s_n} x) \leq \frac{c_2}{c_3} \int_0^\infty \Phi(s) \frac{ds}{s}. 
\end{align*}
Consequently, the same bounds hold for any sequence $s_n \to \infty$. 
This proves Theorem \ref{thm:main2}.
\end{proof}

\begin{proof}[Proof of Theorem \ref{thm:main3}]
Arguing as in the proof of Lemma \ref{Lem compare strip}, we obtain that for any  direction $x \in (\bb S^{d-1}_+)^\circ$ and any $0<a<b$, 
%satisfying the constraint $b/a > \varkappa d$ 
\begin{align*}%\label{}
\nu \left( \left\{  z \in \bb R^d_+:   t a \leq |z| \leq  t b  \right\} \right) 
 \geq  \nu \left( \left\{  z \in \bb R^d_+:   t a \leq  \langle x, z \rangle \leq c_0 t b  \right\} \right)
 \end{align*}
and 
\begin{align*}%\label{}
\nu \left( \left\{  z \in \bb R^d_+:    t a \leq |z| \leq  t b  \right\} \right)
\leq  \nu \left( \left\{  z \in \bb R^d_+:   c_0 t a \leq  \langle x, z \rangle \leq t b  \right\} \right), 
\end{align*}
where $c_0 = \min_{1 \leq i \leq d} x_i$. 
By Lemma \ref{lem equiv Kesten}, we have that $\min_{1 \leq i \leq d} x_i \geq \frac{1}{\varkappa d}$ for any $x \in \supp \pi$. 
% there are constants $c_1, c_2$
%	such that for sufficiently large $t>0$,
%	\begin{align}\label{compare norms 001}
%	c_1 \leq \frac{ \nu \left(  \left\{ z  \in \bb R^d_+ \, : \, |z| \in t [a,b]  \right\}  \right) }{ \nu \left(  \left\{ z  \in \bb R^d_+ \, : \, \scal{x}{z} \in t [a,b]  \right\}  \right) }  \leq c_2. 
%	\end{align}
%	%a sufficient condition for $c_1$ to be positive is $b/a > \varkappa d$. 
%	%Note that the condition $b> c_0^{-1} a$ can be satisfied by considering $t' [sa, sb]$ instead of $t [a, b]$, by simply writing $t = t' s$. 
Then the result follows from Theorem \ref{thm:main2}. % and \eqref{compare norms 001}. 
\end{proof}

%%%%%%%%%%%%%%%%%%%%%%%%%%%%%%%%%%%%%%%%%%%%%%%%%%%
%%%%%%%%%%%%%%%%%%%%%%%%%%%%%%%%%%%%%%%%%%%%%%%%%%%

\end{document}